\documentclass{article}
\usepackage{graphicx}
\usepackage{amsthm}
\usepackage{amsmath}
\usepackage[inline]{asymptote}
\usepackage{tikz}
\usepackage[maxnames=10]{biblatex}
\renewbibmacro{in:}{}
\bibliography{main.bib}
\newtheorem{definition}{Definition}[section]
\newtheorem{theorem}{Theorem}[section]

\newtheorem{lemma}{Lemma}[section]

\newtheorem{question}{Question}[section]
\newtheorem{conjecture}{Conjecture}[section]
\newtheorem{claim}{Claim}[section]
\newtheorem*{claim*}{Claim}
\theoremstyle{remark}
\newtheorem{algorithm}{Algorithm}[section]
\newtheorem{construction}{Construction}[section]
\newtheorem*{remark*}{Remark}

\newcommand{\ep}{\varepsilon}
\newcommand{\R}{\mathcal R}
\newcommand{\LL}{\mathcal L}
\newcommand{\A}{\mathcal A}
\newcommand{\B}{\mathcal B}
\newcommand{\Ss}{\mathcal S}
\newcommand{\T}{\mathcal T}
\newcommand{\I}{\mathcal I}
\newcommand{\J}{\mathcal J}
\usepackage{xcolor}

\title{Spanning Structures in Multipartite Graph Traversals}
\author{Isabel McGuigan \footnote{Department of Mathematics, Carnegie Mellon University.  Email: iemcguigan@cmu.edu.}}
\date{\today}

\begin{document}

\maketitle

\abstract{Let $G$ be an $r$-partite graph such that the edge density between any two parts is at least $\alpha$.  We consider the problem of determining how large $\alpha$ must be in order to guarantee that $G$ has a Hamiltonian traversal (an $r$-cycle subgraph containing exactly one vertex from each part), and show that this critical density tends to $\frac 1 2$ as $r$ increases.  This resolves a conjecture of Badakhshian, Falgas-Ravry, and Sharifzadeh.  We also study the critical densities necessary to guarantee the existence of other spanning structures in traversals, particularly subgraph factors, and obtain asymptotically the critical densities for traversal $F$-factor subgraphs for several classes of graphs $F$.  The proofs of our results involve the absorption method.}

\section{Introduction}\label{sec:intro}

A central theme of extremal graph theory involves determining sufficient conditions for the existence of certain spanning structures in graphs.  Core results in this area include Dirac's theorem (and its many generalizations (\cite{berge}, \S 10.4)) giving sufficient conditions for the existence of a Hamilton cycle in a graph, and the theorem of Hajnal and Szemeredi \cite{hajnal} which gives a minimum-degree condition for the existence of a $K_t$-factor.  Multipartite versions of these theorems have also received attention; for example, Bondy \cite{bondy2} gives a degree condition for the existence of a Hamiltonian cycle in a balanced bipartite graph, and Lo and Markst\"om \cite{lo} and Keevash and Mycroft \cite{keevash} give a degree condition for the existence of a $K_t$-factor in a balanced multipartite graph.

In this paper we consider a different multipartite generalization of this class of problem, proposed in the following form by Badakhshian, Falgas-Ravry, and Sharifzadeh \cite{badakhshian}.  Let $r$ be an integer and $G$ be an $r$-partite graph with vertex partition $V = V_1\sqcup\dots\sqcup V_r$.  A \emph{traversal} of $G$ is a subgraph (on $r$ vertices)  containing exactly one vertex from each part.  For a family $\mathcal F$ of subgraphs of $K_r$, we say that $G$ contains an \emph{$\mathcal F$-traversal} if there exists a traversal of $G$ containing a member of $\mathcal F$ as a subgraph.  We are interested in density conditions for the existence of an $\mathcal F$-traversal in $G$ when $\mathcal F$ is some spanning structure (for example, an $r$-cycle).

In particular, for two disjoint vertex sets $X,Y\subset V[G]$, denote their edge density by $d(X,Y) := \frac{e(X,Y)}{|X||Y|}$.  The $r$-partite density of $G$ is the minimum edge density between two distinct parts, i.e.
\[d_r(G) := \min_{1\leq i<j\leq r}d(V_i,V_j).\]
For a family $\mathcal F$, we consider the critical density
\begin{align*}
    \pi_r(\mathcal F) := \sup \{\alpha \in [0, 1]: &\text{ there exists an $r$-partite graph $G$} \\&\text{with $d_r(G) \geq \alpha$ and no $\mathcal F$-traversal}\}.
\end{align*}

The consideration of density conditions of this form was first proposed by Bollob\'as (\cite{bollobas}, p. 324), and the problem of determining $\pi_r(\mathcal F)$ has been well-studied when $\mathcal F = \{F\}$ is a fixed graph.  The most general result in this direction is an analogue of the Erd\H os-Stone-Simonovitz Theorem:  Bondy, Shen, Thomass\'e, and Thomassen \cite{bondy} established that 
\[\lim_{r\to\infty}\pi_r(F)  =1 - \frac{1}{\chi(F)-1}\]
for all graphs $F$.  This result was strengthened by Pfender \cite{pfender} and subsequently by Narins and Tran \cite{narins} who showed
\[\pi_r(F) = 1-\frac{1}{\chi(F)-1}\]
for $r$ sufficiently large, if and only if $F$ satisfies a certain colorability condition.

The question of determining $\pi_r(\mathcal F)$ when $\mathcal F$ is a specific spanning structure was considered by Badakhshian, Falgas-Ravry, and Sharifzadeh in \cite{badakhshian}.  They in fact considered the problem in the more general setting of $H$-partite graphs, defined as follows.   Let $H$ be a graph on vertex set $V = \{v_1,\dots, v_r\}$.  An \emph{$H$-partite} graph $G$ is a subgraph of a blow-up of $H$; that is, an $r$-partite graph with vertex partition $V = V_1\sqcup \dots \sqcup V_r$ such that there are no edges between $V_i$ and $V_j$ unless $v_iv_j$ is an edge in $H$.  When $H = K_r$, an $H$-partite graph is simply an $r$-partite graph.  The $H$-partite density of an $H$-partite graph is given by
\[d_H(G) := \min_{\substack{1\leq i<j\leq r\\v_iv_j \in E(H)}}d(V_i,V_j).\]
And analogous to the definition of $\pi_r(\mathcal F)$, the critical density for a graph $H$ and a family $\mathcal F$ of subgraphs of $H$ is given by
\begin{align*}\pi_H(\mathcal F) := \sup \{\alpha \in [0, 1]: &\text{ there exists an $H$-partite graph $G$}\\&\text{ with $d_H(G) \geq \alpha$ and no $\mathcal F$-traversal}\}.\end{align*}

The study of $H$-partite graphs in this setting was initiated by Nagy \cite{nagy}, who studied the minimum density necessary to guarantee that an $H$-partite graph $G$ contains a copy of $H$ as a traversal subgraph; that is, the value of $\pi_H(H)$.

A natural question is to determine the smallest density forcing the existence of a connected traversal in an $H$-partite graph; that is, the value of $\pi_H(\mathcal T_r)$ when $H$ is a (connected) graph on $r$ vertices and $\mathcal T_r$ is the family of spanning trees on $r$ vertices.  Nagy's work determines this quantity when $H$ is a tree, and Badakhshian, Falgas-Ravry, and Sharifzadeh determined this quantity exactly for several small graphs $H$; they also conjectured values for $\pi_r(\mathcal T_r)$ and $\pi_{K_{r,r}}(\mathcal T_{2r})$.  The question for $\pi_r(\mathcal T_r)$ was resolved asymptotically by Lengler, Martinsson, Petrova, Schnider, Steiner, Weber, and Welzl in \cite{connectivity}, who showed \[\lim_{r\to\infty}\pi_r(\mathcal T_r) = \frac{3-\sqrt 5}{2} \approx 0.382.\]  For $\pi_{K_{r,r}}(\mathcal T_{2r})$, Badakhsian, Falgas-Ravry, and Sharifzadeh gave a simple construction illustrating $\pi_{K_{r,r}}(\mathcal T_{2r})\geq \frac 1 2$ for all $r$, and asked whether this bound might be tight.

\begin{question}[\cite{badakhshian}, question 5.6]\label{ques:bipconn}
Does $\pi_{K_{r,r}}(\mathcal T_{2r}) = \frac 1 2$ for all $r\geq 1$?
\end{question}

Badakhsian, Falgas-Ravry, and Sharifzadeh also considered the density threshold for a $C_r$, or ``Hamiltonian", traversal.  They gave a construction illustrating that $\pi_r(C_r)$ is strictly greater than $\frac 1 2$ for every $r \geq 3$; i.e., for every $r\geq 3$ there is an $r$-partite graph $G$ with $d_r(G)>\frac 1 2$ containing no Hamiltonian traversal.  However, they conjectured the asymptotic behavior of $\pi_r(C_r)$:

\begin{conjecture}[\cite{badakhshian}, conjecture 1.17]\label{conj:ham}
$\lim_{r\to\infty}\pi_r(C_r) = \frac 1 2$.
\end{conjecture}

The main result of this paper is a proof of Conjecture \ref{conj:ham}.

\begin{theorem}\label{thm:completeham}
For every $\ep > 0$, there exists $r_0$ such that, for all $r > r_0$, we have $\pi_r(C_r) < \frac 1 2 + \ep$.
\end{theorem}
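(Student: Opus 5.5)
We will prove Theorem \ref{thm:completeham} by the absorption method, adapted to traversals. Fix $\ep>0$ and an $r$-partite graph $G$ with parts $V_1,\dots,V_r$ and $d_r(G)\ge \frac12+\ep$. Since only densities matter, we may normalize each $V_i$ to have the same size $n$ (blow up vertices with weights), or equivalently regard each part as a probability space. A Hamiltonian traversal is a cyclic ordering $\sigma$ of $[r]$ together with vertices $x_i\in V_i$ such that $x_{\sigma(j)}x_{\sigma(j+1)}$ is an edge for all $j$. The plan has three stages: (i) an absorbing path, (ii) a cover of almost all parts by a single traversal path, and (iii) absorption of the leftover parts and closing of the cycle.

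\emph{Preprocessing / typical vertices.} Call a vertex $v\in V_i$ \emph{good toward $j$} if its neighbourhood in $V_j$ has density at least $\frac12+\ep/2$. Averaging shows that for each pair $(i,j)$ a constant fraction (depending on $\ep$) of $V_i$ is good toward $j$. The basic connecting tool is the following. If $A\subseteq V_i$ and $B\subseteq V_k$ are sets of relative density about $\frac12+\ep$, then for a positive fraction of parts $j$ (in fact most of them, after a counting argument over $j$) there is $y\in V_j$ with many neighbours in both $A$ and $B$, because $d(V_j,V_i)$ and $d(V_j,V_k)$ both exceed $\frac12$. More robustly, the sets $N(A)\cap V_j$ and $N(B)\cap V_j$ have densities whose sum exceeds $1$ for typical $j$. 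This is where the threshold $\frac12$ enters, and we will use it to connect two path endpoints through one or two unused parts. We expect a pigeonhole/greedy argument to give a ``reachability'' lemma: from any endpoint neighbourhood set of density $>\frac12$ one can reach, through at most $O_\ep(1)$ unused parts, any prescribed target set of density $>\frac12$, provided at least $\ep r$ unused parts remain.

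\emph{Absorbers.} For a part $V_i$, an absorber is a short traversal path $P$ on parts disjoint from $i$, containing consecutive vertices $a,b$ such that some $x\in V_i$ is adjacent to both. Then $P$ can be rerouted as $a\,x\,b$ to include part $i$. We will show that for every $i$ there are many (a constant fraction of $r$) disjoint absorbing configurations. Picking a random family of $\delta r$ parts and building short absorbers on them greedily, every part is absorbable by $\Omega(r)$ of the chosen absorbers. These absorbers are then chained into one absorbing path $P_{abs}$ using the connecting lemma. The standard matching argument, with Hall's condition checked via the $\Omega(r)$ multiplicity, then shows that any set of at most $\delta' r$ leftover parts can be absorbed simultaneously.

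\emph{Covering and closing.} After reserving $P_{abs}$ and a small random reservoir of parts for connections, we greedily extend a path from the end of $P_{abs}$ through the remaining parts while keeping the endpoint's neighbourhood density above $\frac12$. This stops only when fewer than $\delta' r$ parts remain. We then close the cycle using the reservoir, and absorb the leftover parts together with the unused reservoir parts into $P_{abs}$.

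The main obstacle is the connecting/extension step. Density $>\frac12$ only gives edges between \emph{sets}, not from individual vertices, and a greedy path may pick an endpoint with tiny neighbourhoods. We would handle this by keeping a ``robust endpoint'': rather than fixing the last vertex, we maintain a set $S\subseteq V_{\text{last}}$ of density $\ge\frac12+\ep/4$, all of whose vertices extend the current path. Extension to part $j$ then replaces $S$ by $N(S)\cap V_j$, which keeps density above $\frac12$ for most $j$ by an averaging argument. The bad parts, where density collapses, are the delicate case; we would show there are at most $O(1/\ep)$ of them relative to any given $S$ and route around them via the reservoir.
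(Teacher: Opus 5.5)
\textbf{There is a genuine gap: your basic connecting tool and the ``robust endpoint'' device are false, and steps (i)--(iii) all depend on them.} The condition $d(V_j,V_i)>\frac12$ gives almost no control over the neighbourhood in $V_j$ of a \emph{set} $A\subseteq V_i$ of density slightly above $\frac12$. Here is a weighted example.

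Let $V_1=S\sqcup S'$ and $V_2=T\sqcup T'$ with $w(S)=w(T)=0.55$. For $3\le j\le r$ let $V_j=\{x_j,y_j\}$ with $w(x_j)=\frac34$ and $w(y_j)=\frac14$. The edges are:
\begin{itemize}
\item all $x_jx_k$ and all $y_jy_k$ for $j,k\ge3$;
\item $S$ to every $y_j$, and $T$ to every $x_j$;
\item $S'\cup T'$ to every vertex of $V_3\cup\dots\cup V_r$;
\item all pairs in $V_1\times V_2$ except those in $S\times T$.
\end{itemize}
Then $d(V_j,V_k)=\frac58$, $d(V_1,V_j)=1-0.55\cdot\frac34=0.5875$, $d(V_2,V_j)=1-0.55\cdot\frac14$ and $d(V_1,V_2)=1-0.55^2$. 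So $d_r(G)>\frac12+\ep$ for every $\ep\le 0.05$, while $S$ and $T$ have density $0.55\ge\frac12+\ep$.

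Yet there is no subtraversal path from $S$ to $T$ of any length. Its interior would have to lie in $V_3\cup\dots\cup V_r$, where $S$ sees only $y$'s, $T$ sees only $x$'s, and there are no $x$--$y$ edges; there are also no $S$--$T$ edges. The same example refutes your other claims:
\begin{itemize}
\item $N(S)\cap V_j=\{y_j\}$ has density $\frac14$ for \emph{every} $j\ge3$, and inside $V_3,\dots,V_r$ the neighbourhood of $y_j$ is again only the $y$'s. So a robust endpoint set collapses in all parts, not in $O(1/\ep)$ of them, and there is nothing to route around.
\item The densities of $N(S)\cap V_j$ and $N(T)\cap V_j$ sum to exactly $1$, with no common vertex.
\end{itemize}

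Your absorbers can also fail to exist. Take $V_i=\{x\}$ and, for $j\ne i$, $V_j=N_j\sqcup M_j$ with $N(x)\cap V_j=N_j$ of weight $\frac12+2\ep$. Put no edges between $N_j$ and $N_k$, and all other pairs between $V_j$ and $V_k$. Then $d_r(G)=\frac12+2\ep$ for small $\ep$, but $x$ lies in no triangle, so no configuration $a\,x\,b$ with $ab$ an edge exists for part $i$.

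\textbf{What the paper does instead.} The paper is built so that it never needs an edge between two ``large'' sets.
\begin{itemize}
\item It fixes \emph{connecting vertices}. By Lemma \ref{lem:robustconnect}, some single vertex $v\in Q$ has $d(v,P)>\frac12$ for many parts $P$. In a robust sequence the vertices $q_{i-1},q_i$ are chosen in advance, so the vertex in $P_i$ comes from pigeonhole between two neighbourhoods \emph{in the same part}. This is the only direct use of the threshold $\frac12$.
\item When two sets genuinely must be joined, it uses Lemma \ref{lem:connect4strong}. That lemma needs two candidate sets on each side, with $\frac{|S_1|}{|P_1|}+\frac{|S_2|}{|P_2|}\ge1$ and likewise for $S_3,S_4$, and its proof is a nontrivial optimization. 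At the end of a long sequence the second candidate is manufactured by the rerouting through $T_1$ in Lemma \ref{lem:absorberconnect}; at the absorber it comes from having two connecting vertices $a_1,a_2$.
\item The absorber is not a vertex-insertion gadget. It is a $K_{C_1,C_2}$ in the auxiliary ``robustly connects'' graph, found by K\H{o}v\'ari--S\'os--Tur\'an on a split $\LL\sqcup\R$ of the parts.
\item Leftover \emph{parts} are threaded into robust sequences whose ends lie in $\B_L'$ or $\B_R'$. The counting in Lemma \ref{lem:nr} then guarantees the pieces alternate correctly to close the cycle.
\end{itemize}
Your outline would need substitutes for each of these ingredients; as written, the absorber construction, their chaining, and the covering path all rest on the false connecting claim.
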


In fact, we prove a slightly more specific result in the setting of $K_{r,r}$-partite graphs.

\begin{theorem}\label{thm:bipham}
For every $\ep > 0$, there exists $r_0$ such that, for all $r > r_0$, we have $\pi_{K_{r,r}}(C_{2r}) < \frac 1 2 + \ep$.
\end{theorem}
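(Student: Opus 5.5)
We work in the setting of Theorem~\ref{thm:bipham}: $G$ is $K_{r,r}$-partite with parts $A_1,\dots,A_r$ and $B_1,\dots,B_r$, and $d(A_i,B_j)\ge \frac12+\ep$ for all $i,j$. We must find a traversal that is a Hamilton cycle alternating between the $A$-side and the $B$-side. Since this is a property of a weighted ``reduced'' structure, we first normalize. Only the bipartite graphs $G[A_i,B_j]$ matter, so we may delete low-degree vertices: a vertex $a\in A_i$ is \emph{good} for $B_j$ if its neighbourhood in $B_j$ has relative size at least $\frac12+\ep/2$. Averaging shows that for each pair $(i,j)$ a substantial fraction of $A_i$ is good for $B_j$. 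The key structural fact we exploit is that two sets of relative density exceeding $\frac12$ in the same part intersect. Hence if $a\in A_i$ and $a'\in A_{i'}$ are both good for $B_j$, they have a common neighbour in $B_j$. This ``any two good vertices can be linked through any part'' property is what makes $\frac12$ the threshold.

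The plan is absorption.

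\textbf{Step 1: connecting lemma.} Given vertices $a\in A_i$, $a'\in A_{i'}$ and a set of at least $\ep r/10$ unused $B$-parts, I show there is an unused $B_j$ in which $a$ and $a'$ have a common neighbour, provided $a,a'$ are each good for many $B_j$. A vertex $a$ chosen at random from $A_i$ has expected relative degree more than $\frac12$ over the $B_j$'s. So I fix, for each part, a random representative or a small random reservoir of candidates. Concentration over $r$ parts then ensures that every candidate is good for all but a small number of $B$-parts. This step is where $r\to\infty$ is needed.

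\textbf{Step 2: absorbing path.} I build a short path $P$ that can absorb any pair consisting of a leftover $A$-part and a leftover $B$-part. An absorber for the pair $(A_i,B_j)$ is an edge $xy$ of $P$, with $x\in A_{k}$ and $y\in B_l$, together with a choice $u\in A_i$ and $v\in B_j$ such that $x\!-\!v\!-\!u\!-\!y$ is a path. Replacing $xy$ by this path inserts both parts. Counting via the density condition shows that each pair has $\Omega(r^2)$ absorbers. I then choose a random sparse family of absorbing gadgets of size $\gamma r$ and link them using Step 1. A standard argument gives a path $P$ with the following property: for any balanced leftover set of $O(\gamma^2 r)$ parts, matched into pairs, the pairs can be absorbed disjointly. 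Pairing leftovers arbitrarily makes the leftover set balanced automatically, since both sides have $r$ parts.

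\textbf{Step 3: cover and close.} Greedily extend $P$ into an almost spanning path, using Step 1 repeatedly with a reserved random set of parts as connectors. Close it into a cycle, then absorb the remaining $O(\gamma^2 r)$ parts.

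The main obstacle is Step 1 together with the choice of representatives. A single part $A_i$ may have vertices with very unbalanced behaviour: half of $A_i$ could be complete to half of the $B$-parts and empty to the rest. Then no fixed vertex is good for most $B$-parts. My first attempt is to fix a vertex of $A_i$ only at the moment it is used, choosing it as a common-neighbour-compatible vertex. For linking, I would use the intersection property \emph{within} the intermediate part: given an endpoint $a$ and a target part $A_{i'}$, pick $B_j$ with $N(a)\cap B_j$ large. The set of vertices of $A_{i'}$ having a neighbour in $N(a)\cap B_j$ is then large, by density greater than $\frac12$ and a Cauchy--Schwarz/Kővári-type counting argument. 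Carrying a large ``reachable set'' instead of a fixed endpoint, in the spirit of the connectivity argument of \cite{connectivity}, should make both the extension and the absorption work.
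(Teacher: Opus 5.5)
Your proposal has genuine gaps in both Step~1 and Step~2. The obstacle you identify at the end is real, and the proposed fix does not resolve it.

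\textbf{Step 1.} The concentration claim is false. The relative degrees of a vertex $a\in A_i$ into different $B_j$'s are not independent. With your own example suitably adjusted, every vertex of $A_i$ may be good for only an $O(\ep)$-fraction of the $B$-parts. What one can actually guarantee is Lemma~\ref{lem:robustconnect}: some vertex is good for more than an $\ep$-fraction of a prescribed family.

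Two fixed endpoints can then have disjoint families of good parts. Your linking step then has no $B_j$ in which both are good, and a common neighbour is no longer guaranteed. Longer links run into a second problem: sets of relative size above $\frac12$ in $A_{i'}$ and in $B_j$ need not span a single edge. For instance, two sets of relative size $0.6$ with no edges between them are compatible with $d(A_{i'},B_j)=0.64$.

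So neither the greedy extension nor closing the path into a cycle is justified. The ``reachable set'' fallback is not developed, and it looks weak: the vertices of $A_{i'}$ with a neighbour in a set of relative size about $\frac12$ in $B_j$ are only guaranteed relative size about $2\ep$.

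The paper avoids fixing endpoints early. It works with robust sequences of parts (Lemma~\ref{lem:robustpath}), built greedily against a pool of $\lceil 1/\ep\rceil$ open end parts. It joins flexible ends with the new Lemma~\ref{lem:connect4strong}: two candidate sets on each side with relative sizes summing to at least $1$ yield either an edge or a path of length $3$.

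\textbf{Step 2.} The claim that every pair $(A_i,B_j)$ has $\Omega(r^2)$ absorbers is false. Take Construction~\ref{cons:lowerbound} with $r=4$. Its parts $V_1,\dots,V_4$ have all pairwise densities at least $\frac12+\frac1{128}$, and there is no $C_4$-traversal in any order. Set $A_i=V_1$ and $B_j=V_2$. Make every other $A$-part a copy of $V_3$ and every other $B$-part a copy of $V_4$ (or use a blow-up).

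Then $d_{K_{r,r}}(G)\ge\frac12+\frac1{128}$, so for $\ep<\frac1{128}$ this $G$ satisfies the hypothesis. But your gadget $x\!-\!v\!-\!u\!-\!y$ with $xy\in E(G)$ would be a $C_4$-traversal of $V_3,V_2,V_1,V_4$. Hence $(A_i,B_j)$ has no absorbers at all.

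In fact, no Hamiltonian traversal of this $G$ has $A_i$ and $B_j$ consecutive:
\begin{itemize}
\item $y_2$ has neighbours only in $A_i$, so $B_j$ must use $x_2$.
\item If $A_i$ were next to $x_2$, it would have to use $x_1$.
\item The other cycle-neighbour of $x_1$ is then a copy of $v_1$, which has no neighbours outside $A_i$.
\end{itemize}
So any scheme that absorbs leftover parts in arbitrarily prescribed consecutive $A$--$B$ pairs must fail.

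The paper instead absorbs parts individually. By K\H{o}v\'ari--S\'os--Tur\'an it finds $C_1$ parts $\A_L$, each robustly connecting all $C_2$ parts of $\B_R$, and symmetrically $\A_R,\B_L$. It threads leftover parts into robust sequences ending in $\B_L'$ or $\B_R'$. The counting of Lemma~\ref{lem:nr} then gives exactly the right number of pieces for the alternating concatenation to close into a Hamiltonian cycle.
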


Clearly Theorem \ref{thm:bipham} implies Theorem \ref{thm:completeham} when $r$ is even, and a simple adjustment to the argument implies Theorem \ref{thm:completeham} when $r$ is odd.  The proof of Theorem \ref{thm:bipham} relies on the construction of a small absorber into which long paths in the rest of the graph may be connected to create a cycle, as well as a density lemma to enable the connections.

Theorem \ref{thm:bipham} immediately gives an asymptotic answer to Question \ref{ques:bipconn}, and using the same density lemma we obtain the following exact result.

\begin{theorem}\label{thm:bipcon}
For every $r > 7000$, we have $\pi_{K_{r,r}}(\mathcal T_{2r}) = \frac 1 2$.
\end{theorem}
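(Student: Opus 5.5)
The lower bound $\pi_{K_{r,r}}(\mathcal T_{2r})\geq \frac12$ is the construction of Badakhshian, Falgas-Ravry and Sharifzadeh mentioned before Question \ref{ques:bipconn}. So it suffices to show that for $r>7000$ every $K_{r,r}$-partite graph $G$ with parts $A_1,\dots,A_r$ and $B_1,\dots,B_r$ and $d(A_i,B_j)>\frac12$ for all $i,j$ contains a connected traversal. Throughout, write $d_j(x)=|N(x)\cap B_j|/|B_j|$ for $x\in A_i$, and define $d_i(y)$ symmetrically for $y\in B_j$.

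\textbf{Step 1: a hub vertex.} The hypothesis gives $\sum_j d_j(x)>r/2$ on average over $x\in A_1$. Fix $x\in A_1$ attaining at least this average. Let $J=\{j: d_j(x)>0\}$; then $|J|>r/2$, and in fact $x$ has a substantial neighbourhood in many parts $B_j$. The plan is to build a tree from a star at $x$: pick $b_j\in N(x)\cap B_j$ for each $j\in J$. The remaining parts $A_i$ ($i\neq 1$) and $B_j$ ($j\notin J$) must then be hung onto this star.

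\textbf{Step 2: random choice of the star leaves.} Choose each $b_j$ uniformly from $N(x)\cap B_j$, independently. For a fixed part $A_i$ we want some vertex $a\in A_i$ adjacent to some chosen $b_j$. Two facts should make this work.
\begin{itemize}
\item The density condition between $A_i$ and $B_j$ shows that few vertices of $B_j$ have small neighbourhood in $A_i$. This is where I would invoke the paper's density lemma: for $X\subseteq A_i$ and $Y\subseteq B_j$ of relative sizes $\xi,\eta$ with $\xi\eta$ large, there is an edge between $X$ and $Y$.
\item Since $|J|$ grows linearly in $r$, the events ``$A_i$ misses all of $N(b_j)$'' should multiply to something exponentially small in $r$. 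A union bound over the $r$ parts $A_i$ then succeeds.
\end{itemize}
Having fixed the $b_j$, choose $a_i\in A_i$ adjacent to some $b_j$, again at random among the good vertices. Repeat the argument to attach each $B_j$ with $j\notin J$ to some $a_i$.

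\textbf{Step 3: the main obstacle.} The difficulty is that the threshold is exactly $\frac12$, so there is no $\varepsilon$-slack. A single pair $(A_i,B_j)$ can be essentially ``half complete'', with $A_i$ split into two halves joined to complementary halves of $B_j$. Such a pair gives only constant, not large, probabilities for the events in Step 2. The plan is therefore to do the counting exactly, in two parts.
\begin{itemize}
\item Show that the probability that a given part fails to attach is at most $c^{|J|}$ for an explicit $c<1$ derived from the density lemma. This uses the fact that the densities are strictly greater than $\frac12$ and that the choices for different $j$ are independent.
\item Check that $r\cdot c^{\Omega(r)}<1$ once $r>7000$. This is where the specific constant comes from.
\end{itemize}
If the independence-based estimate proves too lossy, the fallback is a deterministic two-round greedy argument. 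First choose the star leaves $b_j$ to maximise the number of parts $A_i$ they dominate. Then use an averaging (double counting) argument over $A_i\times B_j$ to show that any undominated part would force some pair to have density at most $\frac12$.
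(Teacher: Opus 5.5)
\textbf{Your Step~3 estimate is false, and so is the fallback.} The strict inequality $d(A_i,B_j)>\frac{1}{2}$ gives no quantitative slack, so no $c<1$ independent of $G$ can exist.

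For example, let $A_1=\{x\}$ and $A_i=\{a_i\}$ for $i\ge 2$. Split each $B_j$ into classes $p_j,s_j,q_j$ of weights $\frac{1}{2}-\eta,\,2\eta,\,\frac{1}{2}-\eta$. Join $x$ to $p_j,s_j$ and every $a_i$ to $s_j,q_j$. All densities are $\frac{1}{2}+\eta$. Yet a uniform $b_j\in N(x)\cap B_j$ lies in $N(a_i)$ with probability below $4\eta$. So all $A_i$ fail simultaneously with probability at least $(1-4\eta)^r$, which is close to $1$ when $\eta\ll 1/r$.

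Worse, for the hub you fix there may be no good choice of leaves at all. Take $r\ge 4$ and $\theta=\frac{1}{5(r-1)}$, with the following parts and weights:
\begin{itemize}
\item $A_1=\{x,y\}$ with weights $\frac{2}{3},\frac{1}{3}$, and $A_i=\{a_i\}$ for $i\ge 2$;
\item $B_1=\{u,v,w\}$, each of weight $\frac{1}{3}$;
\item $B_j=\{p_j,s_j,q_j\}$ with weights $\frac{1}{2}-3\theta,\,2\theta,\,\frac{1}{2}+\theta$ for $j\ge 2$.
\end{itemize}
The edges are as follows:
\begin{itemize}
\item join $x$ to all of $B_1$ and to all $p_j,s_j$;
\item join $y$ to all $s_j,q_j$;
\item join every $a_i$ to every $q_j$ ($j\ge 2$);
\item join $a_2,a_3,a_4$ to $\{u,v\},\{v,w\},\{u,w\}$ respectively, and each $a_i$ with $i\ge 5$ to $\{u,v\}$.
\end{itemize}
Then $d(A_1,B_1)=d(A_i,B_1)=\frac{2}{3}$, $d(A_1,B_j)=\frac{1}{2}+\frac{\theta}{3}$ and $d(A_i,B_j)=\frac{1}{2}+\theta$. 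Moreover $\sum_j d_j(x)=\frac{r-1}{2}+\frac{4}{5}>\frac{r-1}{2}+\frac{3}{5}=\sum_j d_j(y)$. So $x$ is the only vertex your Step~1 allows, and $J=[r]$.

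Every admissible leaf $b_j\in\{p_j,s_j\}$ with $j\ge 2$ has no neighbour in $A_2\cup\dots\cup A_r$. Also $b_1$ sees at most two of $a_2,a_3,a_4$. Hence some $A_i$ cannot be attached under any choice of leaves, even though every density exceeds $\frac{1}{2}$. This contradicts your fallback claim that an undominated part forces a pair of density at most $\frac{1}{2}$. A connected traversal does exist, e.g.\ $x,u,a_2,\dots,a_r,q_2,\dots,q_r$, but it is not of your three-layer form. Blowing up gives unweighted examples.

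What is missing is a way to join two pieces when all you know is that the relevant neighbourhoods have relative size just above $\frac{1}{2}$. The two-set statement you attribute to the paper forces an edge between $X\subseteq A_i$ and $Y\subseteq B_j$ only when $\xi\eta\ge\frac{1}{2}$. That never applies to such neighbourhoods, where $\xi\eta\approx\frac{1}{4}$. Lemma~\ref{lem:connect4strong} is genuinely a four-part statement. If $\frac{|S_1|}{|P_1|}+\frac{|S_2|}{|P_2|}\ge 1$ and $\frac{|S_3|}{|P_3|}+\frac{|S_4|}{|P_4|}\ge 1$, then $S_1\cup S_2$ and $S_3\cup S_4$ are joined by an edge or by a path of length $3$ through all four parts.

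The paper's proof is deterministic and builds a tree of unbounded depth around this lemma:
\begin{itemize}
\item On each side there is a hub $w$ adjacent to $C=2\lceil\log_2 r\rceil$ vertices $v_i$ with $d(v_i,B_{2i-1})+d(v_i,B_{2i})\ge 1$.
\item A halving argument gives at most $\lceil\log_2 r\rceil$ star centres $u_i$, each with the same two-part half-neighbourhood property. Their neighbourhoods meet all but at most one remaining part on the other side.
\item Lemma~\ref{lem:connect4strong} paths join each star centre to a distinct $v_i$, and join the two hubs to each other.
\item The single leftover part on each side is attached by a direct edge.
\end{itemize}
The threshold $7000$ comes from requiring $r-100\lceil\log_2 r\rceil>0.8r$ when building the hubs, not from a union bound.
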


No particular effort is made to optimize the constant 7000, though the method of proof fails for, say, $r=8$.  However, we can additionally show that $\pi_{K_{2,2}}(\mathcal T_4) = \pi_{K_{3,3}}(\mathcal T_6) = \frac 1 2$, and conjecture that indeed $\pi_{K_{r,r}}(\mathcal T_{2r}) = \frac 1 2$ for every $r$; see further discussion in Section \ref{sec:conclusion}.

Finally, we consider density conditions for the existence of traversal $F$-factor subgraphs, for various graphs $F$. In particular we consider the density threshold for a $K_t$-factor traversal; that is, the value of $\pi_{rt}(rK_t)$, where $rK_t$ denotes a collection of $r$ vertex-disjoint copies of $K_t$.  Badakhsian, Falgas-Ravry, and Sharifzadeh asked for this value; they observed that, by simply partitioning an $rt$-partite graph into $r$ $t$-partite graphs, we have $\pi_{rt}(rK_t) \leq \pi_t(K_t)$.  We give the following asymptotic result.

\begin{theorem}\label{thm:ktfactor}
For any $t\geq 3$, let $\alpha_t$ be the positive solution to the quadratic
\[\alpha = 1 - \frac{1}{t-2}\alpha^2;\]
that is, $\alpha_t = 1 - \frac{t-\sqrt{t^2-4}}{2}$.  Then $\pi_{rt}(rK_t) \geq \alpha_t$ for all $r$, and \[\lim_{r\to\infty}\pi_{rt}(rK_t) = \alpha_t.\]
\end{theorem}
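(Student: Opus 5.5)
The statement has two halves. The lower bound $\pi_{rt}(rK_t)\ge\alpha_t$ comes from an explicit construction valid for every $r$. The limit comes from an absorption argument on an auxiliary hypergraph whose vertices are the parts. The analytic core is a rooted density lemma, and that is where $\alpha_t$ enters.

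\textbf{Lower bound.} Fix one part $V_0$. Split every other part $V_j$ into $t-2$ pieces $P_{j,1},\dots,P_{j,t-2}$, each of measure $\alpha_t/(t-2)$, together with a remainder $R_j$ of measure $1-\alpha_t$. The edges are as follows.
\begin{itemize}
\item Join $V_0$ completely to every $P_{j,c}$ and to nothing else, so that $d(V_0,V_j)=\alpha_t$.
\item For $j\ne k$, join $V_j$ and $V_k$ completely except between $P_{j,c}$ and $P_{k,c}$ with the same colour $c$.
\end{itemize}
This gives $d(V_j,V_k)=1-(t-2)\bigl(\alpha_t/(t-2)\bigr)^2=1-\alpha_t^2/(t-2)=\alpha_t$. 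Any traversal $K_t$ through a vertex of $V_0$ has its other $t-1$ vertices in the pieces $P_{\cdot,\cdot}$. Those $t-1$ vertices must form a clique, but the union of the pieces is $(t-2)$-partite by colour, so this is impossible. Hence no vertex of $V_0$ lies in a traversal $K_t$, and there is no $rK_t$-traversal. To get density exactly $\alpha_t$ one uses a blow-up with rational weights approximating the measures. Since $\pi$ is a supremum, this gives $\pi_{rt}(rK_t)\ge\alpha_t$.

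\textbf{Rooted density lemma.} The key step, and the one I expect to be the main obstacle, is the following claim. For every $\ep>0$ there is $m=m(\ep,t)$ such that, whenever $d\ge\alpha_t+\ep$, every part $V_0$ and every $m$ other parts $V_1,\dots,V_m$ admit a traversal $K_t$ using $V_0$ and $t-1$ of the $V_j$.

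To prove it I would pass to weighted (graphon-like) parts and apply Ramsey's theorem to the colouring of pairs $\{V_j,V_k\}$. This lets me assume the configuration is ``homogeneous'' on a large subfamily. I would then pick a random vertex $v\in V_0$ and look at its neighbourhoods $N_j=N(v)\cap V_j$, which have average measure at least $\alpha_t+\ep$. I would then run a Tur\'an-type counting argument inside the $N_j$'s, in the spirit of the Bondy--Shen--Thomass\'e--Thomassen proof for triangles, to find a $K_{t-1}$ traversal in $t-1$ of the $N_j$'s.

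The quantity to control is how much density between $V_j$ and $V_k$ the requirement ``no $K_{t-1}$ across the $N_j$'s'' forces us to lose. The loss should be at least $\beta^2/(t-2)$ when $|N_j|\approx\beta$, coming from a $(t-2)$-colourability obstruction. Combined with $d\ge\alpha$, this gives $\alpha\le 1-\beta^2/(t-2)$ with $\beta\ge\alpha$, which contradicts $\alpha>\alpha_t$. Making this rigorous, in particular showing that the $(t-2)$-coloured structure is essentially forced as $m\to\infty$, is the hard part. I would try induction on $t$ combined with a weighted Zykov symmetrisation.

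\textbf{Absorption on the part-hypergraph.} Let $\mathcal H$ be the $t$-graph on the $rt$ parts whose edges are the $t$-sets of parts spanning a traversal $K_t$. Traversals in distinct parts are automatically vertex-disjoint, so an $rK_t$-traversal is exactly a perfect matching of $\mathcal H$. The rooted lemma says that for every vertex $x$ of $\mathcal H$, every $m$-set of other vertices contains an edge through $x$. Two consequences follow.
\begin{itemize}
\item Any maximal matching leaves fewer than $m+1$ vertices uncovered. Indeed, a vertex $x$ in a larger leftover set $L$ would form an edge with $t-1$ vertices of $L\setminus\{x\}$, contradicting maximality.
\item By Ramsey and supersaturation, every link $(t-1)$-graph has bounded independence number. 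Hence every link has density $1-o(1)$, and every bounded set of vertices has many common structures.
\end{itemize}

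I would first set aside a small random absorbing matching $M_0$. It should have the property that for any set $S$ of at most $m$ vertices with $|S|\equiv0\pmod t$, the set $V(M_0)\cup S$ has a perfect matching. The gadgets are built from swaps: for $x\in S$ and $e\in M_0$, the rooted lemma applied to $x$ and to $e$ together with a few fresh edges provides a rearrangement. Dense links guarantee that such swaps exist for all bounded $S$ simultaneously.

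Then I would take a maximal matching in the rest of $\mathcal H$. It leaves a set $S$ of at most $m$ vertices, and $|S|\equiv0\pmod t$ by divisibility. Absorbing $S$ into $M_0$ completes the perfect matching once $r$ is large compared with $m(\ep,t)$, which proves $\lim_{r\to\infty}\pi_{rt}(rK_t)=\alpha_t$.
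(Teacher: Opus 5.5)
Your lower bound is correct and is the paper's construction, with the $K_{t-1}$-free graph $G'$ made explicit as the $(t-2)$-colour blow-up. Your upper-bound architecture is also the paper's: a rooted lemma (local $K_t$-coverage) plus absorption in the $t$-graph $\mathcal H$ of parts, including the observation that a maximal matching misses at most $m$ parts.

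\textbf{The rooted lemma is not proved.} This is the step where $\alpha_t$ actually enters, and it is the genuine gap. You defer the ``forced $(t-2)$-coloured structure'' to an unspecified Ramsey/Zykov-symmetrisation induction. That forcing statement is exactly the traversal Erd\H{o}s--Stone theorem of Bondy, Shen, Thomass\'e and Thomassen (Lemma~\ref{lem:ess}) applied to $K_{t-1}$. Once you cite it, your heuristic loss of $\beta^2/(t-2)$ becomes a few lines. Note that large \emph{average} neighbourhood measure for a random $v$ is not what you need. By the averaging of Lemma~\ref{lem:robustconnect}, there is a single $v\in V_0$ with $d(v,V_j)>\alpha_t$ for more than $\ep m$ of the $j$. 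For two such $j,k$, writing $N_j=N_{V_j}(v)$,
\[
\alpha_t+\ep<d(V_j,V_k)\le 1-d(v,V_j)\,d(v,V_k)\,(1-d(N_j,N_k))\le 1-\alpha_t^2\,(1-d(N_j,N_k)).
\]
Since $1-\alpha_t=\alpha_t^2/(t-2)$, this gives $d(N_j,N_k)>\frac{t-3}{t-2}+\ep$. For $m$ large, Lemma~\ref{lem:ess} then yields a $K_{t-1}$-traversal inside these neighbourhoods, and adding $v$ gives the required $K_t$.

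\textbf{The absorption step is not justified as written.} Bounded independence number of a link does not give link density $1-o(1)$. For example, the $(t-1)$-graph of all $(t-1)$-sets lying inside one of two halves has independence number at most $2(t-2)$ but density about $2^{2-t}$. The swap gadget is also never specified. Ramsey plus supersaturation do give positive density of common links of boundedly many vertices, which could support a standard R\"odl--Ruci\'nski--Szemer\'edi absorber once a concrete gadget is given.

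There is a much simpler fix, which is what the paper does (Lemma~\ref{lem:factorabsorb}):
\begin{enumerate}
\item Since $\mathcal H$ has no independent set of size $m+1$, the hypergraph Ramsey theorem gives, for $r$ large, a clique $\mathcal A$ of $tm$ parts in $\mathcal H$. Set it aside.
\item Take a maximal matching of $\mathcal H-\mathcal A$; at most $m$ parts are uncovered.
\item Attach each uncovered part to $t-1$ parts of $\mathcal A$, using the rooted lemma on $m$ still-unused parts of $\mathcal A$. Such parts always exist, since at least $tm-(t-1)(m-1)\ge m$ remain.
\item The rest of $\mathcal A$ has size divisible by $t$. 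Split it arbitrarily into $t$-sets, each of which is an edge because $\mathcal A$ is a clique.
\end{enumerate}
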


Since Bondy, Shen, Thomass\'e, and Thomassen \cite{bondy} proved $\pi_3(K_3) = \frac{-1+\sqrt 5}{2} = \alpha_3$, when $t = 3$ this yields the exact result that $\pi_{3r}(rK_3) = \pi_3(K_3) = \frac{-1+\sqrt 5}{2}$ for all $r\geq 1$.  For $t > 3$ we have $\alpha_t < \pi_t(K_t)$, and we do not know whether or not $\pi_{rt}(rK_t) >\alpha_t$ for all $r$.

Similar to Theorem \ref{thm:bipham}, the proof of Theorem \ref{thm:ktfactor} involves the construction of an absorber.  It seems likely that similar techniques may be useful to determine asymptotically the critical density for a traversal $F$-factor for other graphs $F$; indeed with minor adjustments to the argument we also obtain the critical density for bipartite graph and odd cycle factors. 

\begin{theorem}\label{thm:bipfactor}
    Let $F$ be a bipartite graph on $t$ vertices which is not a star. Then $\lim_{r\to\infty} \pi_{rt}(rF) = \frac{3-\sqrt 5}{2}.$
\end{theorem}

\begin{remark*} The lower bound $\pi_{rt}(rF) \geq \frac{3-\sqrt 5}{2}$ holds for any $t$-vertex graph $F$ which is not a star.
\end{remark*}

\begin{theorem}\label{thm:oddcycfactor}
    Let $t\geq 7$ be odd.  Then
    $\lim_{r\to\infty} \pi_{rt}(rC_t) = \frac 1 2.$
\end{theorem}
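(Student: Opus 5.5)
Theorem \ref{thm:oddcycfactor} needs a lower bound and an upper bound, and I would prove them separately.

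\textbf{Lower bound.} I want, for every $r$, an $rt$-partite graph with density close to $\frac12$ and no $rC_t$-traversal. The simplest route is to force every traversal to be bipartite. For each part $V_i$, split it into halves $A_i \sqcup B_i$. Put all edges $A_i$--$B_j$ and $B_i$--$A_j$, and no edges inside $A\cup A$ or $B\cup B$. Every pairwise density is then exactly $\frac12$. Any traversal lies in the bipartite graph with sides $\bigcup A_i$ and $\bigcup B_i$, so it contains no odd cycle, and hence no $C_t$ at all. This gives $\pi_{rt}(rC_t)\ge \frac12$ for all $r$.

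\textbf{Upper bound: setup.} Fix $\ep>0$ and let $G$ be $rt$-partite with $d_{rt}(G)\ge \frac12+\ep$, where $r$ is large. I would follow the absorption template used for Theorems \ref{thm:bipham} and \ref{thm:ktfactor}. The ground set is the $rt$ parts, and I must partition them into $r$ groups of $t$, each spanning a $C_t$-traversal.

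\textbf{Step 1: many cycles through any small group.} First I show that every $t$-tuple of parts, or at least a positive fraction of them, admits a $C_t$-traversal. This should follow because $t\ge 7$ and density above $\frac12$ forbid a near-bipartite obstruction. Concretely, I would apply the density lemma behind Theorem \ref{thm:bipham} to find long traversal paths between parts. I would then close an odd cycle through a triangle-ish configuration, or through a short odd closed walk among the parts. The hypothesis $t\ge7$ is presumably what gives enough slack to route a path of the correct odd length.

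\textbf{Step 2: absorbers.} For each part $V$, I build absorbers: a collection $\mathcal A_V$ of $t$-sets $S$ of other parts such that both $S$ and $S\setminus\{W\}\cup\{V\}$, for suitable $W$, carry $C_t$-traversals. A more robust alternative is $2t$-configurations that can be covered by two cycles either with or without one extra part. I would argue that each part lies in $\Omega(r^{c})$ absorbers. A random sparse selection then gives a small absorbing family $\mathcal A$, disjoint and of size $\le \ep' r$, which can absorb any small leftover set of parts whose size is divisible by $t$.

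\textbf{Step 3: cover and absorb.} Next, cover almost all remaining parts greedily with disjoint $C_t$-traversals using Step 1, leaving at most $\ep'' r$ parts. Finally, absorb that leftover into $\mathcal A$.

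\textbf{Main obstacle.} I expect the hardest part to be the absorber construction at density only $\frac12+\ep$. Near the bipartite extremal example above, odd cycles are scarce. The first thing I would try is a dichotomy:
\begin{itemize}
\item either $G$ is far from the construction, in which case supersaturation gives many odd closed traversal walks and hence flexible absorbers;
\item or $G$ is close to it, in which case the $\ep$ surplus edges that cross ``wrong'' between some pairs must be used to create the odd cycles directly, via a separate extremal-case argument.
\end{itemize}
The condition $t\ge7$ should enter exactly there, allowing each $C_t$ to use only a single wrong-parity edge plus long bipartite paths.
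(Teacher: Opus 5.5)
Your lower bound is correct: the graph is bipartite and every pairwise density is exactly $\frac12$. The upper bound has a genuine gap. Nothing in your Steps 1--3 actually produces an odd cycle of length exactly $t$ through a prescribed part, and that is the real content of the theorem.

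\textbf{Step 1.} In its ``every $t$-tuple'' form Step 1 is false. Construction \ref{cons:lowerbound} gives $t$ parts with density $\frac12+\frac1{8t^2}$ and no $C_t$-traversal. So for small $\ep$, a given $t$-set of parts need not carry a $C_t$. The ``positive fraction'' version is unproved, and it is also not what you need: a single part lying on no $C_t$-subtraversal already rules out the factor.

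The missing statement is local coverage. For every $\ep>0$ and all large $r$, every $r$-partite $G$ with $d_r(G)>\frac12+\ep$ should have a $C_t$-subtraversal through each part. The paper proves this directly, with no stability dichotomy:
\begin{enumerate}
\item By Lemma \ref{lem:robustconnect} and the K\H{o}v\'ari--S\'os--Tur\'an argument of Claim \ref{claim:absorber}, find parts $P_1=P,P_2,\dots,P_{t'}$, where $t'=\frac{t-1}{2}$, each robustly connecting $Q_1,\dots,Q_{t'+3}$.
\item With connecting vertices $p_1\in P_1$ and $p_2\in P_2$, apply Lemma \ref{lem:connect4strong} to $S_1=N_{Q_1}(p_1)$, $S_2=N_{Q_2}(p_1)$, $S_3=N_{Q_3}(p_2)$, $S_4=N_{Q_4}(p_2)$. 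This gives an \emph{odd} $p_1$--$p_2$ path of length $3$ or $5$.
\item Close the cycle with an even path $p_2,q_5,p_3,q_6,\dots,p_1$ through common neighbourhoods in $Q_5,\dots,Q_{t'+3}$, of length $t-3$ or $t-5$.
\end{enumerate}
This is where odd cycles come from at density just above $\frac12$. The hypothesis $t\ge7$ is exactly the requirement $t-5\ge2$.

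\textbf{Step 2.} The absorbers you describe do not work as stated. A switcher ($S$ and $S\setminus\{W\}\cup\{V\}$ both carrying $C_t$-traversals) does not change the number of covered parts, so it cannot swallow leftovers. Covering ``with or without one extra part'' is incompatible with divisibility by $t$. The random-selection step would also need supersaturation counts that you never establish.

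Once local coverage is available, Lemma \ref{lem:factorabsorb} gives a much cheaper absorber:
\begin{enumerate}
\item Consider the $t$-uniform hypergraph on parts whose edges are the $t$-sets carrying a $C_t$-subtraversal. Local coverage means it has no independent set of size $r_F$.
\item Hypergraph Ramsey then yields a bounded set $\mathcal A$ of parts in which every $t$-subset carries a $C_t$-subtraversal.
\item Cover the remaining parts greedily until fewer than $r_F$ are left.
\item Absorb the leftovers one at a time, using local coverage of $\mathcal A$ together with the leftovers.
\item Split what remains of $\mathcal A$ arbitrarily into $t$-sets.
\end{enumerate}
So the key missing ingredient is the odd-path Lemma \ref{lem:connect4strong} feeding local coverage. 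With it, the extremal-case analysis you anticipate is unnecessary.
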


We note that Theorems \ref{thm:ktfactor}, \ref{thm:bipfactor}, and \ref{thm:oddcycfactor} together give the values of $\lim_{r\to\infty}$ $\pi_{rt}(rC_t)$ for all $t\neq 5$; this gives an almost-complete asymptotic answer to another question of Badakhsian, Falgas-Ravry, and Sharifzadeh (\cite{badakhshian}, problem 5.20). 

The remainder of the paper is organized as follows.  In Section \ref{sec:prelim} we provide some definitions and notation, and state our main lemmas.  In Section \ref{sec:bipham} we prove Theorems \ref{thm:bipham} and \ref{thm:completeham}.  In Section \ref{sec:bipcon} we prove Theorem \ref{thm:bipcon}, and in Section \ref{sec:factors} we prove Theorems \ref{thm:ktfactor}, \ref{thm:bipfactor}, and \ref{thm:oddcycfactor}.  In Section \ref{sec:lemmaproof} we prove the main density lemma.  Finally in Section \ref{sec:conclusion} we discuss remaining open problems in this area, particularly the problem of determining $\pi_r(P_r)$, the density threshold for a Hamilton path traversal.

\section{Preliminaries}\label{sec:prelim}

We first fix some definitions and notation.  In all that follows, let $G$ be an $r$-partite graph with vertex partition $\mathcal P = \{P_1,\dots, P_r\}$.  For two disjoint vertex sets $P$ and $Q$, we denote by $N_P(Q)$ the set of vertices in $P$ which are adjacent to some vertex in $Q$.  We occasionally drop braces when convenient (for example, when $v$ is a vertex we write $N_P(v)$ for $N_P(\{v\})$, and $d(v,P)$ for $d(\{v\},P) = \frac{|N_P(v)|}{|P|}$).

We call a subgraph of $G$ which contains at most one vertex from each part a \emph{subtraversal}. We say that two subtraversal subgraphs of $G$ are \emph{part-disjoint} if there is no part of $G$ which contains a vertex of both of them.

In the remainder of this section we give some definitions and lemmas instrumental to the proof of Theorem \ref{thm:bipham}; we also briefly discuss weighted $H$-partite graphs.  The general strategy of the proof of Theorem \ref{thm:bipham} is to construct a small number of long paths within the $K_{r,r}$-partite graph, then connect them all to a small absorber, thereby creating a Hamiltonian cycle.  In order to ensure that the paths can be connected properly, we need to construct them with some amount of ``flexibility".  That is, we would like to specify the sequence of \emph{parts} that the vertices in a path come from, but have many choices for which actual \emph{vertices} within the parts we choose to form the path, particularly the endpoints.  This motivates the following definitions.

\begin{definition}
Let $P_1,P_2,\dots, P_k$, $Q$ be distinct vertex parts of $G$.  We say that $Q$ \emph{robustly connects} $P_1,\dots, P_k$ if there is a vertex $v\in Q$ such that $d(v,P_i)> \frac 1 2$ for all $1\leq i\leq k$.  We call $v$ the \emph{connecting vertex} in $Q$ for $P_1,\dots, P_k$, or just the ``connecting vertex" if $P_1,\dots, P_k$ are clear from context.
\end{definition}

\begin{definition}
We call a sequence of parts $P_1,Q_1,\dots,P_k,Q_k,P_{k+1}$ a \emph{robust sequence} (of length $2k$) if for each $1\leq i\leq k$, the part $Q_i$ robustly connects $P_i$ and $P_{i+1}$.  We consider a single part $P$ to be a trivial robust sequence (of length 0).  We refer to the parts $P_1$ and $P_{k+1}$ as the ``end parts" of the sequence.
\end{definition}

The utility of this definition is that a robust sequence in $\mathcal P$ must contain a path in $G$ (with many possible choices for the first and last vertex).  

\begin{lemma}\label{lem:robustpath}
    Let $P_1,Q_1,\dots,P_k,Q_k,P_{k+1}$ be a robust sequence.  Then there exist vertices $p_i\in P_i, q_j\in Q_j$ such that $p_1q_1\dots p_kq_kp_{k+1}$ is a path in $G$.
\end{lemma}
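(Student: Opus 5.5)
The plan is to fix all the connecting vertices first and then choose the $p_i$ by pigeonhole. For each $1\leq j\leq k$, let $q_j\in Q_j$ be the connecting vertex for $P_j,P_{j+1}$ provided by the definition of robust connection. Thus $d(q_j,P_j)>\frac 1 2$ and $d(q_j,P_{j+1})>\frac 1 2$, so $|N_{P_j}(q_j)|>\frac{|P_j|}{2}$ and $|N_{P_{j+1}}(q_j)|>\frac{|P_{j+1}|}{2}$.

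Next we choose the $p_i$, treating end parts and interior parts separately.
\begin{itemize}
\item For the end part $P_1$, take any $p_1\in N_{P_1}(q_1)$. This set is nonempty because it has size greater than $|P_1|/2$.
\item For the end part $P_{k+1}$, take any $p_{k+1}\in N_{P_{k+1}}(q_k)$.
\item For an interior part $P_i$ with $2\leq i\leq k$, the two sets $N_{P_i}(q_{i-1})$ and $N_{P_i}(q_i)$ are subsets of $P_i$, each of size strictly greater than $|P_i|/2$. Their sizes sum to more than $|P_i|$, so they must intersect. Choose $p_i$ in the intersection; then $p_i$ is adjacent to both $q_{i-1}$ and $q_i$.
\end{itemize}

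It remains to check that $p_1q_1\dots p_kq_kp_{k+1}$ is a path. All consecutive adjacencies hold by construction. The vertices are distinct because they lie in the distinct parts $P_1,Q_1,\dots,P_{k+1}$. Hence the sequence is a path in $G$, and in fact a subtraversal.

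The trivial case $k=0$ is just a single vertex of $P_1$. There is no real obstacle beyond noting that strictness of the density bound $>\frac12$ is exactly what forces the two neighbourhoods in each interior part to overlap. This argument also shows there are many choices for the endpoints: more than half of each end part can serve as $p_1$ or $p_{k+1}$.
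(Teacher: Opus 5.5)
Your proof is correct and is the paper's argument: fix the connecting vertices $q_j$, take $p_1\in N_{P_1}(q_1)$ and $p_{k+1}\in N_{P_{k+1}}(q_k)$ arbitrarily, and choose each interior $p_i$ in $N_{P_i}(q_{i-1})\cap N_{P_i}(q_i)$, which is nonempty because both sets contain more than half of $P_i$. Your remarks on the distinctness of the vertices and on the trivial case $k=0$ only make explicit what the paper leaves implicit.
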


\begin{proof}
For each $1\leq i\leq k$, let $q_i$ be the connecting vertex in $Q_i$ for $P_i, P_{i+1}$.  For each $2\leq i\leq k$, since $d(q_{i-1}, P_i)$, $d(q_i,P_i) > \frac 1 2 $, we have $|N_{P_i}(q_{i-1})\cap N_{P_i}(q_i)| >  0$, so there's at least one vertex $p_i\in P_i$ adjacent to both $q_{i-1}$ and $q_i$.  By picking $p_1\in N_{P_1}(q_1)$ and $p_{k+1}\in N_{P_{k+1}}(q_k)$ arbitrarily, we complete the desired path.
\end{proof}

In general it is useful to think of a robust sequence as simply a path in $G$ whose end vertices are not fully specified.

Before beginning the proof we need two more lemmas.  The first enables the creation of long robust sequences, which contain long paths, as well as the absorber to which the long paths are ultimately connected.  It also underlies the proofs of Theorems \ref{thm:ktfactor}, \ref{thm:bipfactor}, and \ref{thm:oddcycfactor}.  The second allows for the construction of short paths connecting specific sets of vertices, which is what enables us to connect the long paths to the absorber; it is also useful in the proofs of Theorems \ref{thm:bipcon} and \ref{thm:oddcycfactor}.

\begin{lemma}\label{lem:robustconnect}
    Let $P_1,P_2,\dots, P_k$, $Q$ be distinct parts of $G$.  Suppose $\alpha\in [0,1]$ and $\ep > 0$ are such that $d(Q,P_i)>\alpha + \ep$ for all $1\leq i\leq k$.  Then there is a subset $S\subset[k]$ of size greater than $\ep k$ and a vertex $v\in Q$ such that $d(v, P_s) > \alpha$ for all $s\in S$. 
\end{lemma}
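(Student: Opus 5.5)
Our plan is a simple averaging (first-moment) argument over a uniformly random vertex of $Q$. For each $v\in Q$, let $S_v := \{s\in[k] : d(v,P_s) > \alpha\}$. The aim is to show that some $v$ has $|S_v| > \ep k$; taking $S = S_v$ then finishes the proof.

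First, we express the hypothesis as an average. Since $d(Q,P_i) = \frac{1}{|Q|}\sum_{v\in Q} d(v,P_i)$, summing over $i$ gives
\[
\frac{1}{|Q|}\sum_{v\in Q}\sum_{i=1}^k d(v,P_i) = \sum_{i=1}^k d(Q,P_i) > k(\alpha+\ep).
\]
So some $v\in Q$ satisfies $\sum_{i=1}^k d(v,P_i) > k(\alpha+\ep)$. The inequality is strict because every term in the hypothesis is strict and $k\geq 1$.

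Next, we bound the sum for this $v$ from above by splitting the indices according to $S_v$. If $s\in S_v$, then $d(v,P_s)\leq 1$. If $i\notin S_v$, then $d(v,P_i)\leq\alpha$. Hence
\[
\sum_{i=1}^k d(v,P_i) \leq |S_v| + (k-|S_v|)\alpha \leq |S_v| + k\alpha.
\]
Combining this with the lower bound gives $|S_v| + k\alpha > k\alpha + k\ep$, that is, $|S_v| > \ep k$, which is what we want.

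We do not expect a real obstacle; the only delicate point is keeping the inequalities strict. The strictness comes from the strict hypothesis $d(Q,P_i) > \alpha+\ep$, and it survives both the averaging step and the crude bound $(k-|S_v|)\alpha\leq k\alpha$. If one wanted a slightly sharper statement, one could keep the bound $|S_v|(1-\alpha) > \ep k$, but the weaker form above is all the lemma requires.
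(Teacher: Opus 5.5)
Your proof is correct and is essentially the paper's argument: average $\sum_i d(v,P_i)$ over $v\in Q$ to find a good vertex, then bound that sum by $|S_v|+(k-|S_v|)\alpha$. Your direct version keeps the strict inequality from the hypothesis through the averaging step, which makes it slightly tidier than the paper's proof by contradiction: the paper averages with a non-strict $\geq$, and its displayed strict bound $(k-\ep k)\alpha+\ep k<(\alpha+\ep)k$ fails when $\alpha=0$.
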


\begin{remark*}
    When $\alpha = \frac 1 2$, as in the setting of Theorem \ref{thm:bipham}, this implies that $Q$ robustly connects the parts $\{P_s:s\in S\}.$
\end{remark*}

\begin{proof}
The proof is a straightforward averaging argument.  From the density condition, we have $\sum_{i=1}^kd(Q, P_i) \geq \left(\alpha + \ep\right)k$.  Averaging over vertices in $Q$, we can write
\[\sum_{i=1}^kd(Q, P_i) = \frac 1 {|Q|}\sum_{v\in Q}\sum_{i=1}^kd(v,P_i),\]
so there must be some $v\in Q$ for which $\sum_{i=1}^kd(v,P_i) \geq  \left(\alpha + \ep\right)k$.  Let $m$ be the number of parts $P_i$ for which $d(v,P_i)>\alpha$.  If $m \leq \ep k$, we'd then have
\[\sum_{i=1}^kd(v,P_i) \leq (k-\ep k)\cdot \alpha + \ep k\cdot 1 < \left(\alpha+ \ep\right)k,\]
a contradiction.  Therefore there is a set $S\subset [k]$ of size greater than $\ep k$ such that $d(v, P_s) > \alpha$ for all $s\in S$.
\end{proof}

\begin{lemma}\label{lem:connect4strong}
    Let $P_1,P_2,P_3,P_4\in\mathcal P$ be four parts of $G$, and let $S_i\subset P_i$ be vertex subsets such that $\frac{|S_1|}{|P_1|} + \frac{|S_2|}{|P_2|}, \frac{|S_3|}{|P_3|} + \frac{|S_4|}{|P_4|}  \geq 1$.  Suppose that $d(P_i, P_j)>\frac 1 2$ for $i = 1,2$, $j = 3,4$.  Then, at least one of the following occurs:
    \begin{itemize}
        \item $G$ contains an edge between $S_1\cup S_2$ and $S_3\cup S_4$
        \item $G$ contains a path of length 3 with exactly one vertex in each $P_i$, whose first vertex is in $S_1\cup S_2$, second in $P_3\cup P_4$, third in $P_1\cup P_2$, and last in $S_3\cup S_4$.
    \end{itemize}
\end{lemma}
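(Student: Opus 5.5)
The plan is to argue by contradiction. Write $a_i = |S_i|/|P_i|$, so that $a_1+a_2\geq 1$ and $a_3+a_4\geq 1$. Assume there is no edge between $S_1\cup S_2$ and $S_3\cup S_4$, and no path of length $3$ of the described form. From these two assumptions I will extract enough linear/bilinear constraints on normalized neighbourhood sizes to contradict the four hypotheses $d(P_i,P_j)>\frac12$ for $i\in\{1,2\}$, $j\in\{3,4\}$.

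First, I record the consequences of having no edge between $S_1\cup S_2$ and $S_3\cup S_4$. Since $S_i$ and $S_j$ span no edges for $i\in\{1,2\}$, $j\in\{3,4\}$, we get $d(P_i,P_j)\leq 1-a_ia_j$, hence $a_ia_j<\frac12$. We also get $N_{P_j}(S_i)\subseteq P_j\setminus S_j$ and $N_{P_i}(S_j)\subseteq P_i\setminus S_i$. Put $x_{ij}=|N_{P_j}(S_i)|/|P_j|$ and $y_{ji}=|N_{P_i}(S_j)|/|P_i|$. Then $x_{ij}\leq 1-a_j$ and $y_{ji}\leq 1-a_i$. Counting edges of $(P_i,P_j)$ incident to $S_i$ gives
\[d(P_i,P_j)\leq a_ix_{ij}+(1-a_i),\]
and counting from the other side gives $d(P_i,P_j)\leq a_jy_{ji}+(1-a_j)$. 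So the bound $d(P_i,P_j)>\frac12$ forces these neighbourhoods to be fairly large whenever $a_i$ (respectively $a_j$) is large.

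Next, I translate the absence of the path. There are four admissible patterns, according to the part of the first vertex ($P_1$ or $P_2$) and the part of the second vertex ($P_3$ or $P_4$). For the pattern $P_1,P_3,P_2,P_4$, failure means that no vertex of $P_3$ is adjacent both to $S_1$ and to $N_{P_2}(S_4)$. So $N_{P_3}(S_1)$ is disjoint from $N_{P_3}(N_{P_2}(S_4))$. Equivalently, every vertex of $N_{P_2}(S_4)$, a set of relative size $y_{42}$, has all its $P_3$-neighbours outside a set of relative size $x_{13}$. This gives
\[d(P_2,P_3)\leq 1-x_{13}\,y_{42}.\]
The other three patterns similarly give $d(P_1,P_3)\leq 1-x_{23}y_{41}$, $d(P_2,P_4)\leq 1-x_{14}y_{32}$ and $d(P_1,P_4)\leq 1-x_{24}y_{31}$. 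Each of these must exceed $\frac12$, so each product $x\,y<\frac12$.

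Finally, I combine everything. From $d(P_1,P_3)>\frac12$ we get $a_1x_{13}>a_1-\frac12$, and from $d(P_2,P_4)>\frac12$ we get $a_4y_{42}>a_4-\frac12$; the analogous lower bounds hold for all pairs. By the symmetries $1\leftrightarrow2$ and $3\leftrightarrow4$, assume WLOG $a_1\geq\frac12$ and $a_4\geq\frac12$ (the case where the two large sets sit at $a_1,a_3$ is symmetric, using the other patterns). Then $x_{13}>1-\frac1{2a_1}$ and $y_{42}>1-\frac1{2a_4}$. Plugging into $x_{13}y_{42}<\frac12$, together with $a_1a_3<\frac12$ and the constraints $a_2\geq 1-a_1$, $a_3\geq 1-a_4$, should give the contradiction.

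The main obstacle is that when $a_1$ or $a_4$ is close to $\frac12$, the single inequality chain above is too weak. There I would also bring in the companion bounds $x_{13}\leq 1-a_3$ and $y_{42}\leq 1-a_2$, and use the remaining patterns, playing $a_2$ against $a_1$ and $a_3$ against $a_4$. I would then check the resulting finite system of polynomial inequalities on $[0,1]$ by case analysis on which of the $a_i$ exceed $\frac12$. I expect the boundary cases where the normalized sizes equal exactly $\frac12$ to need the strictness of $d>\frac12$.
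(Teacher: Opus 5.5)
Your individual inequalities are all correct, but the plan fails at the last step. The system you propose to analyze is consistent, so no case analysis of it can produce a contradiction. Take $a_1=a_2=a_3=a_4=\tfrac12$ and all $x_{ij}=y_{ji}=\tfrac12$. Then $a_1+a_2=a_3+a_4=1$, $x_{ij}\le 1-a_j$ and $y_{ji}\le 1-a_i$. Every one of your upper bounds equals $\tfrac34>\tfrac12$: this covers $1-a_i(1-x_{ij})$, $1-a_j(1-y_{ji})$, $1-a_ia_j$, and the four bounds of the form $1-x\,y$ from the path patterns.

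This is not a boundary issue that strictness of $d>\tfrac12$ could repair. For instance, all $a_i=0.6$ with $x_{ij}=y_{ji}=0.4$ also satisfies every one of your constraints with room to spare, even though the true densities are then forced below $\tfrac12$. The loss comes from bounding each density by one forbidden rectangle at a time. Within a single pair, several forbidden regions are disjoint, and their areas must be subtracted together.

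The missing step is a simultaneous count. Consider the pair $(P_1,P_3)$:
\begin{itemize}
\item edges meeting $S_1$ land in $N_{P_3}(S_1)$;
\item edges meeting $S_3$ land in $N_{P_1}(S_3)$;
\item no edge joins $S_1$ to $S_3$;
\item the forbidden rectangle $N_{P_1}(S_4)\times N_{P_3}(S_2)$ lies inside $(P_1\setminus S_1)\times(P_3\setminus S_3)$.
\end{itemize}
Hence
\[d(P_1,P_3)\le a_1x_{13}+a_3y_{31}+(1-a_1)(1-a_3)-x_{23}y_{41},\]
and similarly for the other three pairs. This is exactly the paper's bound written in your variables.

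Even this bound does not suffice pair by pair. With $a_1=1$, $a_2=0$, $a_3=a_4=\tfrac12$, $x_{13}=0$ and $y_{32}=1$, the bound for $d(P_2,P_3)$ is $1$. So you must add the four bounds.

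In the sum, each product is accompanied by linear terms that dominate it. For example, $-x_{23}y_{41}$ comes with $a_2x_{23}+a_4y_{41}$, where $a_2\ge 1-a_1\ge y_{41}$ and $a_4\ge 1-a_3\ge x_{23}$. So the sum is maximized at $x_{ij}=1-a_j$, $y_{ji}=1-a_i$. There it equals $2\bigl(1-(a_1+a_2-1)(a_3+a_4-1)\bigr)\le 2$, so some density is at most $\tfrac12$, which is the desired contradiction.
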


The proof of this lemma is by partitioning each $P_i$ into disjoint neighborhoods, bounding the densities $d(P_i, P_j)$ in terms of the sizes of these neighborhoods, and analyzing the resulting optimization problem.  We defer the calculations to Section \ref{sec:lemmaproof}. 

\subsection{Unweighted and weighted $H$-partite graphs}

All $r$-partite and $H$-partite graphs we have considered so far have been unweighted.  However, it is often useful to consider weighted $H$-partite graphs, defined as follows.  A weighted $H$-partite graph consists of an $H$-partite graph $G$ with vertex partition $V_1\sqcup\dots\sqcup V_r$ together with a weight function $w: V \to [0,1]$ satisfying $\sum_{v\in V_i}w(v) = 1$ for every part $V_i$.  The weighted density between two vertex sets $X$ and $Y$ is given by
\[d_w(X,Y) = \sum_{uv\in E(X,Y)}w(u)w(v),\]
and the weighted $H$-partite density is as before given by 
\[d_{w,H}(G) = \min_{\substack{1\leq i<j\leq r\\ v_iv_j\in E(H)}}d_w(V_i,V_j).\]

As observed by Nagy \cite{nagy}, the settings of weighted and unweighted graphs are equivalent in the following sense.  An unweighted $H$-partite graph can be viewed as a weighted $H$-partite graph with weight function $w(v) = \frac{1}{|V_i|}$ for each $v\in V_i$.  Conversely, a weighted $H$-partite graph can be arbitrarily well-approximated by unweighted $H$-partite graphs by taking rational approximations to its weight function, and considering appropriate blow-ups of the original weighted graph.  It follows that the value of $\pi_H(\mathcal F)$ is the same whether we consider the supremum to be taken over weighted or unweighted $H$-partite graphs.  For simplicity, we generally consider unweighted graphs in our upper-bound proofs and use weighted graphs for lower-bound constructions.

\section{Hamiltonian traversals in $K_{r,r}$-partite graphs}\label{sec:bipham}

\subsection{The lower bound}

Badakhsian, Falgas-Ravry, and Sharifzadeh showed in \cite{badakhshian} that $\pi_r(C_r) > \frac 1 2$ for all $r\geq 3$; that is, for all $r\geq 3$, there is an $r$-partite graph $G$ with $d_r(G)>\frac 1 2$ which has no Hamiltonian traversal. By monotonicity this implies $\pi_{K_{r,r}}(C_{2r}) > \frac 1 2$ for all $r\geq 2$ as well.  For completeness, we reproduce (a slightly simplified version of) their construction here.

\begin{construction}[\cite{badakhshian}, construction 4.3]\label{cons:lowerbound}
    We define a weighted $r$-partite graph $G$.  The parts of $G$ are $V_i = \{x_i, y_i\}$ for $i = 1, 2, \dots, r-2$, $V_{r-1} = \{x_{r-1}\}$, and $V_r = \{v_1,\dots, v_{r-1}\}.$  The edges are:
    \begin{itemize}
        \item $y_iy_j$ for all $1\leq i < j \leq r-2$, and $x_ix_j$ for all $1\leq i < j \leq r-1$
        \item $y_iv_j$ for all $1\leq i \leq r-2$ and $1\leq j\leq r-1$
        \item $x_iv_i$ for all $1\leq i \leq r-1$
    \end{itemize}

    See Figure \ref{fig:lowerbound}. This $G$ has no Hamiltonian traversal; any such traversal would have to contain each $x_i$ (as it must contain $x_{r-1}$, and the vertices chosen in $V_1,\dots, V_{r-1}$ must form a path), but then there is no choice of $v_i\in V_{r}$ which would complete the cycle.

    \begin{figure}
        \centering
        \includegraphics{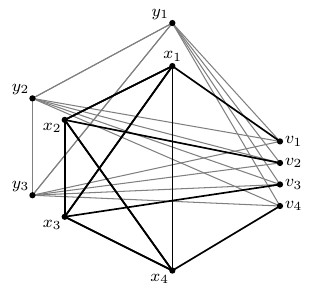}
        \caption{Construction \ref{cons:lowerbound} for $r=5$.}
        \label{fig:lowerbound}
    \end{figure}
    
    The weights are $w(x_i) = \frac 1 2 + \frac 1 {4r}$, $w(y_i) = \frac 1 2 - \frac 1 {4r}$ for $1\leq i\leq r-2$; $w(x_{r-1})$ = 1; $w(v_i) = \frac{1}{2r}$ for $1\leq i\leq r-2$; and $w(v_{r-1}) = 1-\frac{r-2}{2r}$. This yields the following densities:
    \begin{itemize}
        \item For $1\leq i < j \leq r-2$, $d_w(V_i,V_j) = \left(\frac 1 2 + \frac 1 {4r}\right)^2 + \left(\frac 1 2 - \frac 1 {4r}\right)^2 = \frac 1 2 + \frac{1}{8r^2}$. 
        \item For $1\leq i \leq r-2$, $d_w(V_i,V_{r-1}) = \frac 1 2 + \frac 1 {4r}$. 
        \item For $1\leq i \leq r-2$, $d_w(V_i,V_r) = \left(\frac 1 2 - \frac 1 {4r}\right) + \left(\frac 1 2 + \frac 1 {4r}\right)\left(\frac 1 {2r}\right) = \frac 1 2 + \frac 1 {8r^2}$.
        \item $d_w(V_{r-1},V_r) = 1 - \frac{r-2}{2r} = \frac 1 2 + \frac 1 r$.
     \end{itemize}

     So $G$ is an $r$-partite graph with no Hamiltonian traversal and $d_{w,r}(G) = \frac 1 2 + \frac 1 {8r^2} > \frac 1 2$.
    
\end{construction}

\subsection{Proof of theorem \ref{thm:bipham}}

Fix $\ep > 0$, let $r$ be sufficiently large, and let $G$ be a $K_{r,r}$-partite graph with $d_{K_{r,r}}(G) > \frac 1 2 + \ep$.  We show that $G$ has a Hamiltonian traversal.

A general outline of the proof is as follows.  Let $G$ have vertex partition $\mathcal P = \LL\sqcup \R = \{L_1,\dots, L_r\}\sqcup \{R_1,\dots, R_r\}$. First, we find small collections of parts $\A_L, \B_L\subset \LL$, $\A_R,\B_R\subset\R$ such that each $A\in \A_L$ robustly connects $\B_R$, and each $A\in \A_R$ robustly connects $\B_L$.  We refer to this structure as the ``absorber".  Second, we partition remaining parts into a small number of robust sequences.  Third, for each sufficiently long robust sequence we construct a long path which traverses most of the parts in the sequence and has both endpoints in the absorber.  Finally, we partition any leftover parts into robust sequences whose end parts are in the absorber, and continue lengthening these sequences until all parts are traversed by some path connected to the absorber, and the number of paths is exactly correct to complete a Hamiltonian traversal.  

\subsubsection{Constructing the absorber}\label{sec:absorber}

In this section we rigorously describe and construct the absorber. Fix constants 
\[C_1 = \frac {80} {\ep^2},\quad C_2 = \frac {150}{\ep^2}.\]  
\begin{claim}\label{claim:absorber}
    There exist disjoint sets $\A_L,\B_L\subset \LL$ and $\A_R,\B_R\subset \R$ satisfying the following:
    \begin{enumerate}
        \item $|\A_L| = |\A_R| = C_1$, $|\B_L| = |\B_R| = C_2$.
        \item Every $A\in\A_L$ robustly connects the parts in $\B_R$.
        \item Every $A\in\A_R$ robustly connects the parts in $\B_L$.
    \end{enumerate}
\end{claim}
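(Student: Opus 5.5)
We build the absorber by iterating Lemma \ref{lem:robustconnect} with $\alpha = \frac 1 2$ and the given $\ep$. Recall that every pair $(L_i,R_j)$ has density greater than $\frac 1 2+\ep$. Each application of the lemma to a part $Q$ on one side against $k$ parts on the other side returns a vertex of $Q$ that robustly connects a subset of size greater than $\ep k$. The difficulty is that different parts $A\in\A_L$ return different subsets, and we need one common set $\B_R$ that all of them connect. We handle this by successive refinement: each successive choice of a part shrinks the candidate set by a factor $\ep$. Because $C_1$ and $C_2$ are constants independent of $r$, this is affordable once $r$ is large enough, namely $r \geq \ep^{-C_1}C_2$ or so.

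\textbf{Step 1 (the left side).} Start with $\R^{(0)} = \R$ and pick distinct parts $A_1,A_2,\dots,A_{C_1}\in\LL$ one at a time. At stage $i$, apply Lemma \ref{lem:robustconnect} to $Q=A_i$ and the parts of the current set $\R^{(i-1)}$. This gives a connecting vertex in $A_i$ and a subset $\R^{(i)}\subset \R^{(i-1)}$ with $|\R^{(i)}|>\ep|\R^{(i-1)}|$ that $A_i$ robustly connects. Robust connection to a set is inherited by its subsets, since the same connecting vertex works. Hence after $C_1$ steps every $A_i$ robustly connects every part of $\R^{(C_1)}$, and $|\R^{(C_1)}| > \ep^{C_1} r$. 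Set $\A_L=\{A_1,\dots,A_{C_1}\}$ and let $\B_R$ be any $C_2$ parts of $\R^{(C_1)}$. This requires $\ep^{C_1}r\geq C_2$, which holds for $r$ large.

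\textbf{Step 2 (the right side).} Repeat the construction with the sides swapped. Choose $\A_R$ from $\R\setminus\B_R$, which has at least $r-C_2 > C_1$ parts, and run the refinement starting from $\LL\setminus\A_L$, which has $r - C_1$ parts. This yields $\A_R$ and a set $\B_L\subset \LL\setminus\A_L$ of size $C_2$ such that every $A\in\A_R$ robustly connects $\B_L$. This needs $\ep^{C_1}(r-C_1)\geq C_2$, again true for $r$ large.

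\textbf{Disjointness.} On the left, $\A_L$ and $\B_L$ are disjoint because $\B_L\subset\LL\setminus\A_L$. On the right, $\B_R$ and $\A_R$ are disjoint because $\A_R$ was drawn from $\R\setminus\B_R$. Left and right sets are automatically disjoint.

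The only real obstacle is the common-target issue addressed by the nested refinement. What makes it work is that robust connection is monotone under taking subsets of the target parts, so the refinement loses nothing. The cost is the exponential dependence $r_0\gtrsim \ep^{-C_1}C_2$, which is harmless since $r_0$ may depend on $\ep$ arbitrarily.
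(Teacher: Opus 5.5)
Your proof is correct, and it takes a different route from the paper's. The paper applies Lemma \ref{lem:robustconnect} once to each left part $L$ against all of $\R$. This fixes one connecting vertex and one set $\Ss_L$ of more than $\ep r$ right parts per $L$. It then builds the auxiliary bipartite graph on $\LL\cup\R$ with $L\sim R$ iff $R\in\Ss_L$, notes that this graph has at least $\ep r^2$ edges, extracts a copy of $K_{C_1,C_2}$ by the K\H{o}v\'ari--S\'os--Tur\'an theorem, and repeats on the other side.

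You instead re-apply the lemma to each new part against the current, shrinking target family, so the connecting vertex is allowed to depend on that family. This uses the fact that the density hypothesis holds for every left--right pair, so the lemma can be applied to any subfamily of $\R$. The paper's auxiliary graph, with one vertex per part frozen in advance, does not record that information, which is why the paper has to fall back on a general dense-graph theorem. Your version has three advantages:
\begin{itemize}
\item it is self-contained, with no appeal to K\H{o}v\'ari--S\'os--Tur\'an;
\item it shows that $\A_L$ may be any prescribed set of $C_1$ left parts;
\item it handles explicitly the disjointness of $\B_L$ from $\A_L$ and of $\A_R$ from $\B_R$, which the paper leaves to ``by the same argument.''
\end{itemize}
Quantitatively the two routes cost about the same. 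The counting proof of K\H{o}v\'ari--S\'os--Tur\'an also needs $r$ of order roughly $C_2\ep^{-C_1}$ to find $K_{C_1,C_2}$ at density $\ep$, matching your condition $\ep^{C_1}r\geq C_2$. In both cases this threshold depends only on $\ep$, which is all the theorem requires.
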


\begin{proof}
    We first find $\A_L$ and $\B_R$.  For each part $L$ in $\LL$, by Lemma \ref{lem:robustconnect} there is a set $\Ss_L\subset \R$ of size at least $\ep r$ such that $L$ robustly connects $\Ss_L$.  Consider an auxiliary bipartite graph with left and right vertex sets $\LL$ and $\R$ respectively.  We connect $L\in \LL$ to $R\in \R$ if $R\in \Ss_L$.  The resulting bipartite graph has $2r$ vertices and at least $\ep r^2$ edges; by the Kővári–Sós–Turán theorem \cite{kst} it contains a copy of $K_{C_1,C_2}$ for $r$ sufficiently large.  Such a $K_{C_1,C_2}$ corresponds to sets $\A_L$ of size $C_1$ and $\B_R$ of size $C_2$ such that each $L\in\A_L$ robustly connects $\B_R$, as desired.

    We find $\A_R$ and $\B_L$ by the same argument.
\end{proof}

Claim \ref{claim:absorber} produces a structure with two sections: $\A_L\cup\B_R$ and $\A_R\cup\B_L$.  Each part in $\A_L$ (resp. $\A_R$) has a connecting vertex for $\B_R$ (resp. $\B_L$). We need to cross between these two sections, so we augment them with two extra paths. 

\begin{claim}\label{claim:shortpaths}
    There exist two part-disjoint subtraversal paths $P_1$ and $P_2$ in $G$ satisfying the following:

    \begin{enumerate}
        \item $P_1$ and $P_2$ each have one endpoint which is a connecting vertex in $\A_L$, and one which is a connecting vertex in $\A_R$.
            \item $P_1$ and $P_2$ each have length either 3 or 5, and their non-endpoint vertices are contained in $\B_L\cup \B_R$.
    \end{enumerate}

\end{claim}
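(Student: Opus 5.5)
The plan is to build each path from one connecting vertex in $\A_L$ to one in $\A_R$, routing through two parts of $\B_R$ and two parts of $\B_L$. Lemma \ref{lem:connect4strong} will supply the middle of the path. Since $C_1\geq 2$ and $C_2\geq 4$, we can choose distinct parts $A_1,A_2\in\A_L$ and $A_1',A_2'\in\A_R$. We also choose four distinct parts $R_1,R_2,R_3,R_4\in\B_R$ and four distinct parts $L_1,L_2,L_3,L_4\in\B_L$. Path $P_1$ will use $A_1,A_1',R_1,R_2,L_1,L_2$, and $P_2$ will use $A_2,A_2',R_3,R_4,L_3,L_4$. These parts are all distinct, so the two paths are automatically part-disjoint subtraversals.

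For $P_1$, let $u\in A_1$ be the connecting vertex for $\B_R$ and $u'\in A_1'$ the connecting vertex for $\B_L$. We apply Lemma \ref{lem:connect4strong} to the parts $R_1,R_2,L_1,L_2$ (in the roles of the lemma's $P_1,P_2,P_3,P_4$), with the sets
\[S_1=N_{R_1}(u),\quad S_2=N_{R_2}(u),\quad S_3=N_{L_1}(u'),\quad S_4=N_{L_2}(u').\]
By the definition of robust connection, each ratio $|S_i|/|P_i|$ exceeds $\frac12$, so both required sums are at least $1$. Each pair $R_i,L_j$ lies across the bipartition of $K_{r,r}$, so $d(R_i,L_j)>\frac12+\ep>\frac12$ by hypothesis. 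The hypotheses of the lemma therefore hold.

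The two outcomes of the lemma give the two allowed lengths.
\begin{itemize}
\item If there is an edge $xy$ with $x\in S_1\cup S_2$ and $y\in S_3\cup S_4$, then $u\,x\,y\,u'$ is a path of length $3$. Indeed, $x$ is adjacent to $u$ and $y$ is adjacent to $u'$ by the choice of the $S_i$.
\item Otherwise there is a path $x\,p\,q\,y$ using one vertex from each of $R_1,R_2,L_1,L_2$, with $x\in S_1\cup S_2$ and $y\in S_3\cup S_4$. Then $u\,x\,p\,q\,y\,u'$ is a path of length $5$.
\end{itemize}
In either case the path has one endpoint a connecting vertex in $\A_L$ and the other a connecting vertex in $\A_R$. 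Its internal vertices lie in $\B_L\cup\B_R$, and it visits each part at most once. Repeating the argument with the second set of parts produces $P_2$.

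I expect no serious obstacle, because Lemma \ref{lem:connect4strong} does all the substantive work. The only points that need care are the following. The lemma must be applied with the $\B_R$ parts as the first pair and the $\B_L$ parts as the second pair, so that the endpoints of its path attach to $u$ and $u'$ respectively. The four parts used by each path must be chosen disjoint from those of the other path. And the vertices $u,u'$ lie in $\A_L,\A_R$, which are disjoint from $\B_L\cup\B_R$, so they cannot collide with the internal vertices.
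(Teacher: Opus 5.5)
Your proof is correct and follows the paper's argument: apply Lemma \ref{lem:connect4strong} to two parts of $\B_R$ and two of $\B_L$, with the $S_i$ taken to be neighborhoods of connecting vertices, and then attach those connecting vertices at the two ends. The only difference is cosmetic: the paper takes two parts $A_1,A_2\in\A_L$ (and two in $\A_R$), with separate connecting vertices defining $S_1,S_2$. You use a single connecting vertex on each side, which is valid because it robustly connects all of $\B_R$ (resp.\ $\B_L$), and this slightly simplifies the bookkeeping.
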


\begin{proof}
We first find $P_1$.  Choose arbitrary parts $A_1,A_2\in \A_L$,  $A_3,A_4\in\A_R$, $B_1,B_2\in \B_R$, and $B_3,B_4\in \B_L$.   For each $1\leq i\leq 4$, let $a_i$ be the connecting vertex in $A_i$, and let $S_i = N_{B_i}(a_i)$.   As $a_i$ is the connecting vertex, $S_i$ satisfies $|S_i|>\frac 1 2 |B_i|$ for each $i$.  Thus, there exists a path from $S_1\cup S_2$ to $S_3\cup S_4$ as described by Lemma \ref{lem:connect4strong}, and hence one from $\{a_1,a_2\}$ to $\{a_3,a_4\}$ satisfying conditions (1) and (2).

By repeating this argument, choosing different arbitrary parts, we obtain the path $P_2$.
\end{proof}

The sets $\A_L,\A_R,\B_L,$ and $\B_R$, augmented with the paths $P_1$ and $P_2$, form our absorber.  In the sequel we denote by $\A_L'$ the set of parts in $\A_L$ which neither $P_1$ nor $P_2$ traverse, and use $\A_R',\B_L',\B_R'$ similarly.  We have $|\mathcal A_L'| = |\mathcal A_R'| = C_1 - 2$, and $C_2-4\leq |\mathcal B_L'| = |\mathcal B_R'| \leq C_2-2$.

\subsubsection{Constructing robust sequences}

Let $\LL' = \LL\setminus \left(\A_L\cup \B_L\right)$, $\R' = \R\setminus\left( \A_R\cup \B_R\right)$.  We have $|\LL'| = |\R'| = r - C_1-C_2$. In this section we discuss how to partition $\LL'\cup\R'$ into a small number of robust sequences.  

\begin{claim}\label{claim:robustpaths}
    There exists a partition of $\LL'\cup\R'$ into robust sequences satisfying the following:
    \begin{enumerate}
        \item Every nontrivial robust sequence has both its end parts in $\R'$.
        \item The total number of robust sequences is less than $\frac 2 \ep$.
    \end{enumerate}
\end{claim}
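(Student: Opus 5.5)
Since $\LL'$ and $\R'$ have the same size, the idea is to build robust sequences that alternate $R, L, R, L, \dots, R$: each $L\in\LL'$ serves as a connector (a $Q_i$) between two parts of $\R'$. A nontrivial sequence of length $2k$ uses $k$ parts of $\LL'$ and $k+1$ parts of $\R'$, so both end parts automatically lie in $\R'$, which is condition (1). Leftover parts are allowed to be trivial sequences, but we must keep their total number below $\frac 2\ep$. I would build the sequences greedily, always extending or merging sequences at their $\R'$-ends.

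\textbf{Step 1 (growing sequences).} Maintain a current collection of disjoint robust sequences with $\R'$ end parts, starting with every part of $\R'$ as a trivial sequence. Also maintain a set $U\subset\LL'$ of unused left parts. While $U$ is nonempty, take $L\in U$ and apply Lemma \ref{lem:robustconnect} (with $\alpha=\frac12$) to $L$ against the parts of $\R'$. Since $d(L,R)>\frac12+\ep$, we get a set of more than $\ep|\R'|$ parts in $\R'$ that $L$ robustly connects. We want two of these to be end parts of two \emph{different} current sequences; then $L$ can join them into one longer sequence $\dots R\,L\,R'\dots$, reversing one sequence if needed. Each such merge uses one $L$ and decreases the number of sequences by one.

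\textbf{Step 2 (counting).} Use $m$ for the current number of sequences; they have at most $2m$ end parts in total. The difficulty is that the robustly connected set may consist mostly of \emph{interior} parts of sequences, not end parts. To handle this I would instead apply Lemma \ref{lem:robustconnect} only against the current set $E$ of end parts: if $m\ge \frac1\ep$ then $|E|\ge m$ (counting a trivial sequence's single part once), and $L$ robustly connects more than $\ep|E|$ of them. We need these connected end parts to belong to at least two distinct sequences. A single sequence contributes at most 2 end parts, so it suffices that $\ep|E|\ge 2$, i.e.\ roughly $m\ge \frac 2\ep$. Merging therefore continues while $m\ge\frac2\ep$.

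\textbf{Step 3 (when merging stops).} The main obstacle is what happens once $m$ falls below $\frac2\ep$ but unused parts in $U$ remain; these must still be placed into sequences. The count works in our favour here. Every part of $\R'$ lies in some sequence, the sequences contain $|\R'|-m$ left parts, and $|\LL'|=|\R'|$, so exactly $m$ left parts remain unused. With only $m<\frac 2\ep$ of them left, I would put each into its own trivial sequence. That would give up to $2m$ sequences and also violate condition (1), since trivial sequences consisting of a left part have no end part in $\R'$. To avoid this, I would instead run the merging the other way: apply Lemma \ref{lem:robustconnect} to the unused $L$ against all of $\R'$, choose a connected part $R$ interior to some sequence, and splice $L$ in by cutting that sequence at $R$. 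Concretely, replace $\dots Q\,R\,Q'\dots$ with two sequences ending at $R$, using the duplicate end… this does not work, because a part cannot be used twice.

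The cleaner fix is to stop merging early: choose the threshold so that when the loop ends, no left parts remain. For this I would process all of $\LL'$ first (each $L$ must join two distinct sequences). This requires at every step $\ep\cdot\#\{\text{end parts}\}>2$ using all end parts, including trivial ones. Since $m$ decreases to exactly $|\R'|-|\LL'|=0$, a literal count of this kind is impossible. So the intended statement must allow the final $\frac2\ep$ unused left parts to be absorbed differently, which makes Step 3 the crux. The natural repair I would try is to let each leftover $L$ attach to one end part of a sequence and one part that is currently isolated, keeping a reserve of $\frac2\ep$ trivial $\R'$ parts from the start. This yields a total of at most $\frac 2\ep$ sequences, all with ends in $\R'$.
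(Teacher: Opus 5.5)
There is a genuine gap: your argument does not reach the bound $\frac 2\ep$. In Step 2 you allow both end parts of every sequence as targets. So you need $L$ to robustly connect at least three of them before you can be sure two lie in different sequences, and when many sequences are trivial this only guarantees a merge while $m\ge\frac 2\ep$. When merging stops you correctly observe that exactly $m$ left parts remain unused. The partition then has $2m$ sequences, which can be close to $\frac 4\ep$.

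Step 3 then goes astray for two reasons. First, condition (1) only constrains \emph{nontrivial} sequences, so leaving the unused parts of $\LL'$ as trivial sequences is perfectly legal; nothing needs to be absorbed differently. Second, your final repair cannot help. By Lemma \ref{lem:nr}, any partition of $\LL'\cup\R'$ of the required form has $N_{\LL'}=N_{\R'}$, so the total number of sequences is always $2N_{\R'}$. The only way to get below $\frac 2\ep$ is therefore to keep merging until $N_{\R'}<\frac 1\ep$. In addition, nothing guarantees that a given $L$ robustly connects the particular end part and reserve part you want to attach it to.

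The missing idea is to restrict the targets to \emph{one} end part per current sequence. Let $\mathcal C\subset\R'$ consist of one end part from each of at least $\lceil\frac 1\ep\rceil$ distinct current sequences, where a trivial sequence contributes its single part. Lemma \ref{lem:robustconnect} with $\alpha=\frac12$ gives more than $\ep|\mathcal C|\ge 1$ parts of $\mathcal C$ robustly connected by an unused $L$, i.e.\ at least two. These automatically lie in different sequences, so the merge always succeeds while $N_{\R'}\ge\lceil\frac1\ep\rceil$. The number of unused left parts equals $N_{\R'}$ throughout (again Lemma \ref{lem:nr}), so an unused $L$ is always available. The process therefore ends with $N_{\LL'}=N_{\R'}\le\lceil\frac1\ep\rceil-1<\frac1\ep$, giving a total of fewer than $\frac2\ep$ sequences. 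This is exactly the paper's Algorithm \ref{alg:robustpaths} together with Lemma \ref{lem:alganal}.
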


During this stage, and further in the argument, it is useful to keep track of the number of sequences we have created.  We use the following definition and notation.

\begin{definition}
    We call a sequence of parts $P_1,Q_1,\dots,P_k,Q_k,P_{k+1}$ \emph{alternating} if each $P_i$ is in $\LL$ and each $Q_i$ is in $\R$, or each $P_i$ is in $\R$ and each $Q_i$ is in $\LL$.
\end{definition}

\begin{remark*}
    Every sequence of parts which contains a path in $G$ (in particular, every robust sequence) is alternating.
\end{remark*}

\begin{definition}
    Given subsets $\Ss\subset\LL$ and $\T\subset\R$, and a partition of $\Ss\cup\T$ into (possibly trivial)   sequences of parts, let $N_{\Ss}$ be the number of (possibly trivial) sequences with both end parts in $\Ss$ and $N_{\T}$ be the number of (possibly trivial) sequences with both end parts in $\T$. 
\end{definition}

We have the following key observation.

\begin{lemma}\label{lem:nr}
Suppose that $\Ss\subset\LL$, $\T\subset\R$, and the parts of $\Ss\cup\T$ are partitioned into (possibly trivial) alternating sequences such that every sequence has both end parts in $\Ss$ or both end parts in $\T$.  If $|\Ss| = |\T|$, then $N_{\Ss} = N_{\T}$.
\end{lemma}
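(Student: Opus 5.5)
The plan is a counting argument: in each alternating sequence, compare the number of parts it uses from $\Ss$ with the number it uses from $\T$, then sum over the whole partition.

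Fix one alternating sequence $P_1,Q_1,\dots,P_k,Q_k,P_{k+1}$ in the partition. Because it is alternating, the end parts $P_1,\dots,P_{k+1}$ all lie on one side (all in $\LL$ or all in $\R$), and the connecting parts $Q_1,\dots,Q_k$ all lie on the other side. So the sequence contains $k+1$ parts from the side of its end parts and $k$ parts from the opposite side. This also covers trivial sequences ($k=0$), which contain a single part. By hypothesis, each sequence has both end parts in $\Ss$ or both in $\T$, and all its parts lie in $\Ss\cup\T$ with $\Ss\subset\LL$, $\T\subset\R$. Therefore a sequence with end parts in $\Ss$ contributes exactly one more part to $\Ss$ than to $\T$, and a sequence with end parts in $\T$ contributes exactly one more part to $\T$ than to $\Ss$.

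Now sum over all sequences of the partition. Each part of $\Ss\cup\T$ is counted exactly once, so
\[|\Ss| - |\T| = \sum_{\text{sequences}} \bigl(\#\text{parts in }\Ss - \#\text{parts in }\T\bigr) = N_{\Ss}\cdot 1 + N_{\T}\cdot(-1) = N_{\Ss}-N_{\T}.\]
Setting $|\Ss|=|\T|$ gives $N_{\Ss}=N_{\T}$.

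The argument has no real obstacle. The only point needing care is that the alternating condition forces the parts $Q_i$ to lie on the side opposite the end parts. Together with the containments $\Ss\subset\LL$ and $\T\subset\R$, this places every $Q_i$ in the set opposite to the end parts, which is what gives the $\pm1$ contribution per sequence.
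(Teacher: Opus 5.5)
Your proof is correct and rests on the same observation as the paper's: a sequence of length $2l$ with both end parts on one side uses $l+1$ parts from that side and $l$ from the other. The paper phrases this as an invariant: it starts from the trivial partition, where $N_{\Ss}=|\Ss|=|\T|=N_{\T}$, and checks that forming each nontrivial sequence preserves $N_{\Ss}=N_{\T}$. You instead sum directly, which gives the slightly more general identity $|\Ss|-|\T|=N_{\Ss}-N_{\T}$ and treats sequences ending in $\Ss$ explicitly rather than by symmetry.
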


\begin{proof}
If the partition contains no nontrivial alternating sequences, we have $N_{\Ss} = |\Ss| = |\T| = N_{\T}$.  Consider the action of connecting some parts of $\Ss\cup\T$ into a nontrivial alternating sequence with both end parts in $\T$.  This sequence has length $2l$ for some $l\geq 1$, and uses $l+1$ parts from $\T$ and $l$ from $\Ss$.  Thus, this action decreases $N_{\Ss}$ by $l$, and decreases $N_{\T}$ by $l+1$ while increasing it by 1; hence after forming this sequence we still have $N_{\Ss} = N_{\T}$.  Iterating proves the claim.
\end{proof}

\begin{proof}[Proof of Claim \ref{claim:robustpaths}]

We give a simple algorithm to produce the desired partition, which is also useful in later stages of the proof of Theorem \ref{thm:bipham}.

\begin{algorithm}\label{alg:robustpaths}
    Given two subsets $\Ss\subset\LL$ and $\T\subset\R$ (or $\Ss\subset\R$ and $\T\subset\LL$), we repeat the following steps.  We begin with the trivial partition obtained by taking each part in $\Ss\cup \T$ to be a trivial robust sequence.  At each step, we consider a part in $\mathcal P$ to be ``isolated" if it is not yet part of some nontrivial robust sequence.

    \begin{enumerate}
\item Define a set $\mathcal C\subset\T$ of ``connectable" parts consisting of one end part from each nontrivial robust sequence.  If $|\mathcal C| < \lceil\frac 1 \ep\rceil$, then add isolated parts to it until $|\mathcal C| = \lceil\frac 1 \ep\rceil$.
If there are not enough isolated parts to do this, halt. Otherwise continue to step 2.
\item If there are no isolated parts in $\Ss$, halt.  Otherwise, choose any isolated part $S\in \Ss$.  Since $\ep |\mathcal C| \geq 1$, by Lemma \ref{lem:robustconnect}, there are at least two parts $T_1, T_2$ in $\mathcal C$ that $S$ robustly connects.  Each of $T_1$ and $T_2$ was the end part of a (possibly trivial) robust sequence; by inserting $S$ between them, we connect those two sequences together.  Return to step 1.
\end{enumerate}
\end{algorithm}

The result of this algorithm is a partition of $\Ss\cup\T$ into robust sequences such that each nontrivial sequence has both its end parts in $\T$.

\begin{lemma}\label{lem:alganal}
The following statements are true of Algorithm \ref{alg:robustpaths}:
\begin{enumerate}
    \item At no point in this process are there more than $\lceil\frac 1 \ep\rceil$ disjoint nontrivial robust sequences in $\Ss\cup\T$.
    \item Each iteration of Algorithm \ref{alg:robustpaths} decreases $N_{\Ss}$ and $N_{\T}$ by exactly 1.
    \item At the point when the algorithm halts, either $N_{\Ss} = 0$ or $N_{\T} < \frac 1 \ep$.  If $|\Ss| = |\T|$ to begin with then at the conclusion of the algorithm we have $N_{\Ss} = N_{\T} < \frac 1 \ep$.
\end{enumerate}
\end{lemma}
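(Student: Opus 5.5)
Each of the three parts comes from tracking how a single iteration of Algorithm \ref{alg:robustpaths} changes the current partition of $\Ss\cup\T$ into (possibly trivial) robust sequences. Throughout, every sequence in the partition has both end parts in $\Ss$ or both in $\T$. Initially all sequences are trivial, and each iteration takes an isolated $S\in\Ss$ and inserts it between two end parts in $\T$. We check the properties as invariants.

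For part (2), we look at one iteration. The part $S$ was a trivial sequence with both ends in $\Ss$ and is absorbed, so $N_{\Ss}$ drops by exactly $1$. Two sequences with ends $T_1,T_2\in\T$ are merged into one. We must ensure $T_1,T_2$ lie in \emph{distinct} sequences. This holds because $\mathcal C$ contains only one end part from each nontrivial sequence, together with isolated parts, which are distinct trivial sequences. So two distinct members of $\mathcal C$ always belong to different sequences. The merged sequence still has both ends in $\T$: the other end of each of the two sequences was in $\T$. Hence $N_{\T}$ drops by exactly $1$. The new sequence is alternating and robust, since $S$ robustly connects $T_1$ and $T_2$. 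Consistency with Lemma \ref{lem:nr} is a useful sanity check.

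For part (1), we again use the fact that merging sequences never increases the number of nontrivial sequences by more than what the isolated parts contribute. In a step, the number of nontrivial sequences changes as follows:
\begin{itemize}
\item it increases by $1$ if both $T_1,T_2$ are isolated;
\item it stays the same if exactly one of them is isolated;
\item it decreases by $1$ if neither is isolated.
\end{itemize}
Let $m$ be the number of nontrivial sequences. If $m\ge\lceil 1/\ep\rceil$, then $\mathcal C$ consists only of end parts of nontrivial sequences, with $|\mathcal C|=m$. It therefore contains no isolated parts, and the count cannot increase. If $m<\lceil 1/\ep\rceil$, the count increases by at most one, so it reaches at most $\lceil 1/\ep\rceil$. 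By induction, $m\le\lceil 1/\ep\rceil$ at all times. One subtlety should be checked here: when $m\ge\lceil1/\ep\rceil$, the set $\mathcal C$ has size $m$ rather than $\lceil 1/\ep\rceil$. Step 2 still works, since $\ep|\mathcal C|\ge 1$ gives more than $1$ and hence at least two parts.

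For part (3), the algorithm halts in one of two ways. If it halts in step 2, there are no isolated parts of $\Ss$; every part of $\Ss$ is then interior to a sequence with ends in $\T$, so $N_{\Ss}=0$. If it halts in step 1, then $m<\lceil1/\ep\rceil$ and there are too few isolated parts of $\T$ to fill $\mathcal C$. Sequences with both ends in $\T$ are either nontrivial, numbering $m$, or isolated parts of $\T$. Their total is therefore less than $\lceil 1/\ep\rceil$, i.e.\ at most $\lceil1/\ep\rceil-1<1/\ep$, so $N_{\T}<1/\ep$. When $|\Ss|=|\T|$, Lemma \ref{lem:nr} gives $N_{\Ss}=N_{\T}$ throughout. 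In the first halting case this yields $N_{\T}=0<1/\ep$, and in the second case $N_{\Ss}=N_{\T}<1/\ep$.

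The main obstacle is the bookkeeping in part (3). The precise meaning of ``isolated'' must be used carefully, so that the isolated parts available for $\mathcal C$ are exactly the trivial sequences in $\T$, and the off-by-one between $\lceil1/\ep\rceil$ and $1/\ep$ must be handled. Part (1)'s invariant must also be stated precisely enough to survive the case $|\mathcal C|>\lceil1/\ep\rceil$.
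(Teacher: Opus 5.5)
Your proposal is correct and follows essentially the same argument as the paper. It uses the same per-step change in the number of nontrivial sequences for (1), phrased as an induction rather than the paper's first-violation argument. It uses direct bookkeeping for (2), and the two halting cases together with Lemma \ref{lem:nr} for (3). You also make explicit two details the paper leaves implicit: that $T_1,T_2$ lie in distinct sequences, and that $N_{\T}$ counts exactly the nontrivial sequences plus the isolated parts of $\T$.
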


\begin{proof}
Note first that each iteration of step 2 changes the number of disjoint nontrivial robust sequences in $\Ss\cup\T$ by at most 1.  

To see (1), suppose that at at some point in this process there were more than $\lceil\frac 1 \ep\rceil$ disjoint nontrivial robust sequences in $\LL'\cup\R'$. Let iteration $i$ be the first at which this occurs.  Then after step 2 of iteration $i-1$, the number of disjoint nontrivial robust sequences must have been exactly equal to $\lceil\frac 1 \ep\rceil$.  But then at step 1 of iteration $i$, $\mathcal C$ would consist of exactly one end part from each of these $\lceil\frac 1 \ep\rceil$ nontrivial sequences, and step 2 would connect two of these end parts together, decreasing the number of disjoint sequences rather than increasing it. 

(2) is clear.

To see (3), note that the algorithm ends either when $|\mathcal C| = \min(\lceil\frac 1 \ep\rceil, N_{\T})$ is less than $ \frac 1 \ep$, or when there is no remaining isolated part in $\Ss$; since there are no nontrivial robust sequences with both endpoints in $\Ss$, this means $N_{\Ss} = 0$.  If $|\Ss|=|\T|$ then by Lemma \ref{lem:nr} we have $N_{\Ss} = N_{\T}$ at every step of the process, implying the second statement.
\end{proof}

By applying Algorithm \ref{alg:robustpaths} to $\Ss= \LL'$ and $\T = \R'$, we obtain a partition of $\LL'\cup\R'$ into robust sequences such that each such nontrivial sequence has both endpoints in $\R'$ and $N_{\LL'} = N_{\R'} < \frac 1 \ep$, proving Claim \ref{claim:robustpaths}.
\end{proof}

\subsubsection{Constructing long paths connected to the absorber}

Claim \ref{claim:robustpaths} provides a partition of $\LL'\cup\R'$ into a small number of robust sequences.  The next step is to find short paths which connect the ends of sufficiently long robust sequences to the absorber.  The result is a collection of ``long paths" traversing most of the parts in $G$ whose endpoints are connecting vertices in $\A_R'$.

\begin{claim}\label{claim:connect}
    There exists a collection of subtraversal paths $Q_1,Q_2,\dots$ in $G$ satisfying the following:
    \begin{enumerate}
        \item The $Q_i$'s are part-disjoint, and are part-disjoint from the paths $P_1$ and $P_2$ produced by Claim \ref{claim:shortpaths}.
        \item The endpoints of each $Q_i$ are connecting vertices in $\A_R'$.
        \item Each $Q_i$ traverses at most 4 parts in $\B_L'$, and no parts in $\B_R$ or $\A_L$.
        \item The total number of paths $Q_i$ is at most $\frac 2 \ep$.
        \item The paths $Q_1,Q_2,\dots$ together traverse all but at most $\frac {66}{\ep^2}$ parts in $\LL'\cup\R'$.
    \end{enumerate}
\end{claim}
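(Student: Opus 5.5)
We will build the paths $Q_i$ one robust sequence at a time, working with the partition of $\LL'\cup\R'$ from Claim \ref{claim:robustpaths}. There are fewer than $\frac 2\ep$ sequences in total. Every nontrivial sequence has both end parts in $\R'$.

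Fix a nontrivial robust sequence $R_0,L_1,R_1,\dots,L_k,R_k$ of length $2k$, and treat it, via Lemma \ref{lem:robustpath}, as a path whose endpoints are still free. At each end we want a short connection into $\A_R'$ through a part of $\B_L'$, and for this we use Lemma \ref{lem:connect4strong}. The lemma needs two choices at each end, so we first trim the sequence. Consider the two end parts $R_0$ and $R_1$ with connecting vertex $\ell_1\in L_1$. Setting $S_1=N_{R_0}(\ell_1)$ and $S_2=N_{R_1}(\ell_1)$ gives $\frac{|S_1|}{|R_0|}+\frac{|S_2|}{|R_1|}>1$. If we build a path that stops at $R_0$, or one that stops at $R_1$ (discarding $R_0$), the remaining sequence still ends in the chosen vertex.

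On the absorber side, take two unused parts $A_3,A_4\in\A_R'$ with connecting vertices $a_3,a_4$, and two unused parts $B_3,B_4\in\B_L'$. Put $S_3=N_{B_3}(a_3)$ and $S_4=N_{B_4}(a_4)$; each has relative size greater than $\frac12$. Lemma \ref{lem:connect4strong} needs $d(P_i,P_j)>\frac12$ between the $\R$-parts $R_0,R_1$ and the $\LL$-parts $B_3,B_4$, which holds by the density hypothesis. Applying it to $(R_0,R_1;B_3,B_4)$ gives one of two outcomes. The first is an edge from $S_1\cup S_2$ to $S_3\cup S_4$. The second is a 3-edge path that starts in $S_1\cup S_2$, passes through a vertex of $B_3\cup B_4$ and then a vertex of $R_0\cup R_1$, and ends in $S_3\cup S_4$.

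In the second case the middle $\R$-vertex lies in one of $R_0,R_1$. If it lies in the end part we are keeping, the path would reuse that part, which is the main obstacle. To avoid this, we restrict $R_0,R_1$ to the disjoint roles the lemma guarantees: the path has exactly one vertex in each $P_i$. So the middle vertex lies in the other $R$-part, the one we discard from the sequence. That part is then absorbed into the connector, and the result is still part-disjoint.

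The connector enters $\A_R'$ at $a_3$ or $a_4$ and uses at most 2 parts of $\B_L'$, so with both ends each $Q_i$ uses at most 4 parts of $\B_L'$ and none of $\B_R$ or $\A_L$. Since the robust sequence fixes the end vertex in $R_0$ or $R_1$ only through $\ell_1$ and an arbitrary neighbour choice, Lemma \ref{lem:robustpath} lets us realise the sequence with exactly the required endpoint. We repeat at the other end with fresh absorber parts, again losing at most one end part.

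We do this for every sequence long enough (length at least 4, say), using new parts of $\A_R'$ and $\B_L'$ each time. The resources suffice: there are fewer than $\frac2\ep$ paths, each using 2 parts of $\A_R'$ and at most 4 of $\B_L'$. We have $|\A_R'|=C_1-2\geq \frac4\ep$ and $|\B_L'|\geq C_2-4\geq \frac8\ep$, so we never run out. This gives items (1)--(4).

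For (5), the untraversed parts of $\LL'\cup\R'$ are of two kinds. The first kind is the discarded end parts, at most 2 per sequence, so at most $\frac4\ep$ in total. The second kind is the parts in short or trivial sequences. Lemma \ref{lem:alganal} gives $N_{\LL'}=N_{\R'}<\frac1\ep$, so there are fewer than $\frac2\ep$ trivial sequences, and short sequences have $O(1)$ parts each. This totals far fewer than $\frac{66}{\ep^2}$.
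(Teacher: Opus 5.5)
Your proposal has a genuine gap in the length-3 outcome of Lemma \ref{lem:connect4strong}. That lemma gives no control over which of $R_0,R_1$ contains the endpoint $x$ of the connector, and its middle $\R$-vertex $m$ always lies in the other one.

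Suppose $x\in S_1\subset R_0$, so $m\in R_1$. You keep $R_0$, so from $x$ the path must go to $\ell_1\in L_1$ and then into $R_1$. But $R_1$ is already occupied by $m$, so the path dies at $L_1$ and the rest of the sequence is cut off. Your sentence ``the middle vertex lies in the other $R$-part, the one we discard'' holds only when $x\in R_1$. When $x\in R_0$, the other part $R_1$ is exactly the part through which the remainder of the sequence must pass.

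Choosing a different pair of $\R$-parts does not help by itself. Whenever the connector lands in the earlier part, its middle vertex blocks the later one, which the continuation must pass through.

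There is also a separate slip: $S_2=N_{R_1}(\ell_1)$ is the wrong set. If you drop $R_0$ and continue from $y\in R_1$, the next vertex is $\ell_2$, so you need $S_2=N_{R_1}(\ell_2)$. That slip is easy to fix; the blocking problem is not.

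The missing idea is a detour around the blocked part, which is what the paper's Lemma \ref{lem:absorberconnect} provides. In the paper's indexing $R_1,L_1,\dots,L_k,R_{k+1}$, set $C=\lceil 4/\ep\rceil$ and proceed as follows.
\begin{enumerate}
\item Apply Lemma \ref{lem:robustconnect} to an extra part $T_1\in\B_L'$ against $R_1,\dots,R_C$. This gives $i'<j'$ with $j'-i'\geq 3$ such that $T_1$ robustly connects $R_{i'}$ and $R_{j'}$.
\item Pick $i'<i<j<j'$ and apply Lemma \ref{lem:connect4strong} to $B_1,B_2$ and $R_i,R_j$, with $S_i=N_{R_i}(l_{i-1})$ and $S_j=N_{R_j}(l_j)$.
\item If the connector lands in $S_j$ (so $R_i$ may be blocked), the path continues forward from $R_j$.
\item If it lands in $S_i$ (so $R_j$ may be blocked), the path walks backward from $R_i$ to $R_{i'}$, jumps through $T_1$ to $R_{j'}$, and continues forward.
\end{enumerate}

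This is why the paper treats only sequences of length greater than $4C$ and loses up to $4C$ parts per sequence. That loss is what produces the bound $\frac{66}{\ep^2}$ in (5). Your leftover count of order $\frac 1\ep$ reflects the missing detour, not a genuine improvement.
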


\begin{proof} The following lemma enables connecting both ends of a sufficiently long robust sequence to parts in $\B_L'$. 

\begin{lemma}\label{lem:absorberconnect}
Let $C = \left\lceil\frac 4 \ep\right\rceil$.  Let $R_1,L_1,\dots,R_k,L_k,R_{k+1}$ be a robust sequence of length $2k > 4C$ with each $R_i\in\R$, $L_i\in\LL$. Let $B_1, B_2, B_3, B_4$ be four other parts in $\LL$ with subsets $S_i\subset B_i$ satisfying $|S_i|>\frac 1 2|B_i|$, and let $T_1, T_2$ be two more arbitrary parts in $\LL$.  Then there is a subtraversal path in $G$ which has one endpoint in $S_1 \cup S_2$, the other in $S_3\cup S_4$, and traverses all but at most $4C$ of the parts $R_1,L_1,\dots,R_k,L_k,R_{k+1}$.
\end{lemma}  

\begin{proof}
We first connect the beginning of the given robust sequence to $S_1\cup S_2$.  Since $\ep C\geq 4$, by Lemma \ref{lem:robustconnect} there is a set $S\subset [C]$ of size at least 4 such that $T_1$ robustly connects $\{R_s:\;s\in S\}$.  In particular, there are some $1\leq i'< j'\leq C$ such that $j' - i' \geq 3$ and $T_1$ robustly connects $R_{i'}$ and $R_{j'}$.  Choose $i, j$ with $i' < i < j < j'$. 

Let $l_{i-1}, l_j$ be the connecting vertices in $L_{i-1}, L_j$.  Let $S_{i}  = N_{R_{i}}(l_{i-1})$ and $S_{j} = N_{R_{j}}(l_j)$, so $|S_i|>\frac 1 2 |R_i|$ and $|S_j|>\frac 1 2 |R_j|$.  By Lemma \ref{lem:connect4strong}, there exists a path from $S_1\cup S_2$ to $S_i\cup S_j$ using at most one vertex in each of $B_1, B_2, R_i, R_j$.  Without loss of generality suppose this path has one endpoint in $S_1$.  If the other endpoint is in $S_j$, then the sequence
\[S_1, \dots, S_j, L_j, R_{j+1},\dots, R_k,L_k,R_{k+1}\]
contains a path in $G$ (where the first ellipses represent either zero or two other parts traversed by the path from $S_1$ to $S_j$).  If the other endpoint is in $S_i$, then the sequence 
\[S_1, \dots, S_i, L_{i-1},R_{i-1},\dots,L_{i'}, R_{i'}, T_1, R_{j'}, L_{j'},\dots, R_k,L_k,R_{k+1}\]
likewise contains a path in $G$. See Figure \ref{fig:absorberconnect}.

\begin{figure}
\centering
\includegraphics[width=\textwidth]{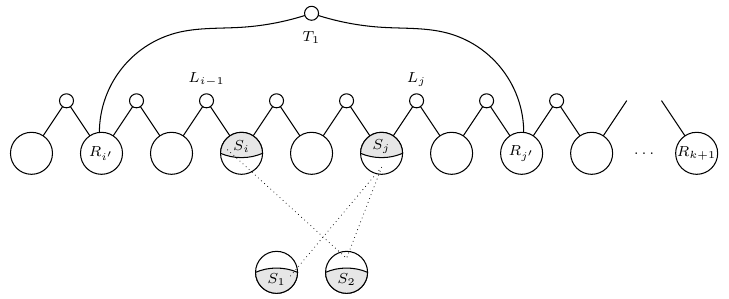}
\caption{Construction of the path in Lemma \ref{lem:absorberconnect}.  In this case there is a path from $S_1$ to $S_i$ using at most one vertex in each of $B_1, B_2, R_i, R_j$, and the sequence $S_1,R_j,B_2,S_i,L_{i-1},R_{i-1},\dots,L_{i'}, R_{i'},T_1,R_{j'},L_{j'},\dots, R_k,L_k,R_{k+1}$ contains a path in $G$.}
\label{fig:absorberconnect}
\end{figure}

In both cases the path produced traverses every part in $R_1,L_1,\dots,R_k,$ $L_k,R_{k+1}$ except for some subset of $R_1,L_1,\dots, R_C,L_C$, so misses at most $2C$ parts.

By repeating this argument on the other end of the original sequence, using $B_3, B_4, T_2$ in place of $B_1, B_2, T_1$, we obtain a path in $G$ which has one endpoint in $S_1 \cup S_2$, the other in $S_3\cup S_4$, and traverses all but at most $4C$ of the parts in the original sequence.
\end{proof}

For each robust sequence $R_1,L_1,\dots,R_k,L_k,R_{k+1}$ of length greater than $4C$ in our partition of $\LL'\cup\R'$, we may choose different arbitrary parts $A_1,\dots, A_4\in\A_R'$, $B_1,\dots,B_4,$ $T_1,T_2\in \B_L'$.  For each $1\leq i\leq 4$, the part $A_i$ robustly connects all of $\B_L$; let $a_i\in A_i$ be the connecting vertex, and let $S_i = N_{B_i}(a_i)$ for each $1\leq i \leq 4$.  Applying Lemma \ref{lem:absorberconnect} yields a path in $G$ from $S_1\cup S_2$ to $S_3\cup S_4$ (and hence one from $\{a_1,a_2\}$ to $\{a_3,a_4\}$) which traverses all but at most $4C$ parts of $R_1,L_1,\dots,R_k,L_k,R_{k+1}$.  The path thus produced is one of the desired $Q_i$'s.

Because we started with at most $\frac 2 \ep$ nontrivial robust sequences in $\LL'\cup\R'$, and $|\A_R'|  = C_1-2 > \frac 8 \ep$ and $|\B_L'| \geq C_2 -4 > \frac{12} \ep$, we never run out of parts in $\A_R$ or $\B_L$ with which to perform these operations.  The construction of the $Q_i$'s, including the choice of different arbitrary parts, guarantee that the resulting paths satisfy conditions (1)-(4) of Claim \ref{claim:connect}.

For condition (5), we note that any part in $\LL'\cup\R'$ which is not contained in any $Q_i$ must have been one of the following:
\begin{itemize}
    \item An element of a robust sequence of length less than $4C$, which was too short to apply Lemma \ref{lem:absorberconnect} to.  Since we started with at most $\frac 2 \ep$ robust sequences, there are at most $\frac 2 \ep \cdot (4C+1) < \frac{34}{\ep^2}$ such parts.
    
    \item A part in a long robust sequence which was not traversed by the corresponding path created by Lemma \ref{lem:absorberconnect}, of which there are again at most $\frac 2 \ep \cdot 4C < \frac{32}{\ep^2}$.
\end{itemize}
So the total number of such parts is at most $\frac {66}{\ep^2}$ as desired.

\end{proof}

\subsubsection{Absorbing leftover parts}

At this point, we have a collection of long paths $Q_i$ satisfying the conditions of Claim \ref{claim:connect}, along with at most $\frac{66}{\ep^2}$ ``leftover" parts in $\LL'\cup\R'$ that the $Q_i$'s don't traverse.  We now apply Algorithm $\ref{alg:robustpaths}$ two more times in order to connect the leftover parts into the absorber and finish the cycle.  

For any set $\Ss\subset\mathcal P$ of parts, let $\I_{\Ss}$ be the set of parts in $\Ss$ which are not traversed by any of the paths $Q_i$.

\begin{claim}\label{claim:absorb1}
    There are disjoint nontrivial robust sequences $T_1, T_2,\dots$ of parts in $\I_{\R'}\cup\I_{\B_L'}$ satisfying the following conditions:
    
    \begin{enumerate}
        \item Each $T_i$ has both end parts in $\I_{\B_L'}$.
        \item Every part in $\I_{\R'}$ is contained in some $T_i$.
        \item The number of parts in $\I_{\B_L'}$ which are not contained in any of the $T_i$'s is at least $\frac {71}{\ep^2}$.
        
    \end{enumerate}
\end{claim}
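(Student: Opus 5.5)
We apply Algorithm \ref{alg:robustpaths} with $\Ss = \I_{\R'}$ and $\T = \I_{\B_L'}$, with the roles of $\LL$ and $\R$ swapped. Every nontrivial robust sequence it produces then has both end parts in $\I_{\B_L'}$, which gives condition (1) for the nontrivial sequences $T_1,T_2,\dots$. The algorithm needs $d(S,T) > \frac 1 2 + \ep$ for every $S\in\Ss$ and $T\in\T$, so that Lemma \ref{lem:robustconnect} applies in step 2. This holds because $\Ss\subset\R$, $\T\subset\LL$, and $G$ is $K_{r,r}$-partite with density greater than $\frac 1 2+\ep$.

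First I bound the sizes of the two sets. By Claim \ref{claim:connect}(5), $|\I_{\R'}| \le |\I_{\LL'}\cup\I_{\R'}| \le \frac{66}{\ep^2}$. By Claim \ref{claim:connect}(3) and (4), the paths $Q_i$ traverse at most $4\cdot\frac 2\ep = \frac 8\ep$ parts of $\B_L'$. Hence
\[|\I_{\B_L'}| \ge C_2 - 4 - \frac 8\ep = \frac{150}{\ep^2} - 4 - \frac 8\ep.\]

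Next I check that the algorithm halts only once every part of $\Ss$ has been used, which is condition (2). By Lemma \ref{lem:alganal}(2), each iteration removes exactly one isolated part of $\Ss$ and lowers $N_{\T}$ by exactly one. So after $m$ iterations, $N_{\T} = |\T| - m$. The algorithm can halt early only in step 1, when there are too few isolated parts to fill $\mathcal C$ up to $\lceil\frac1\ep\rceil$. Since $\mathcal C$ can take one end part from each nontrivial sequence together with isolated parts of $\T$, step 1 succeeds whenever $N_{\T}\ge\lceil\frac1\ep\rceil$. (The conclusion $N_{\Ss}=0$ in Lemma \ref{lem:alganal}(3) uses the same fact.) Throughout the run,
\[N_{\T} \ge |\T| - |\Ss| \ge \frac{150}{\ep^2} - 4 - \frac8\ep - \frac{66}{\ep^2} = \frac{84}{\ep^2} - 4 - \frac 8\ep,\]
which exceeds $\lceil\frac1\ep\rceil$ for small $\ep$ (we may assume $\ep<1$). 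So the algorithm halts only in step 2, when no isolated part of $\Ss$ remains. Every part of $\I_{\R'}$ therefore lies in a nontrivial sequence $T_i$.

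For condition (3), I count the parts of $\I_{\B_L'}$ not contained in any $T_i$; these are exactly the trivial sequences remaining in $\T$. At the end, $N_{\T} = |\T| - |\Ss|$. Among these $N_{\T}$ sequences, at most $\lceil\frac1\ep\rceil$ are nontrivial, by Lemma \ref{lem:alganal}(1). So the number of untouched parts is at least
\[\frac{84}{\ep^2} - 4 - \frac8\ep - \left\lceil\frac1\ep\right\rceil.\]
This must be at least $\frac{71}{\ep^2}$, i.e. $\frac{13}{\ep^2} \ge 4 + \frac 9\ep + 1$, which holds comfortably for $\ep\le 1$ (the slack absorbs the ceiling).

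The main thing to get right is the bookkeeping in this last step, namely the case $\Ss$ empty or the algorithm halting immediately, which is harmless. One should also note that condition (3) asks for leftover parts \emph{not in any} $T_i$. These leftover parts are counted by the trivial sequences only, so the $\lceil 1/\ep\rceil$ nontrivial sequences counted in $N_{\T}$ must be subtracted, as was done above.
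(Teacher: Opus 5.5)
Your proof is correct and follows essentially the same argument as the paper: run Algorithm \ref{alg:robustpaths} on $\Ss=\I_{\R'}$ and $\T=\I_{\B_L'}$. The bound $|\T|-|\Ss|\ge\lceil\frac1\ep\rceil$ (from Lemma \ref{lem:alganal}(2),(3)) forces the algorithm to halt only once $\Ss$ is exhausted, and you then subtract the at most $\lceil\frac1\ep\rceil$ nontrivial sequences from the final $N_{\T}$. One small slip is that your last inequality does not hold for every $\ep\le 1$; it fails, for example, at $\ep=0.95$, where $\lceil\frac1\ep\rceil=2$. This is harmless, since $\ep$ may be assumed small from the outset, and the paper's own arithmetic needs that too.
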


\begin{proof}
    Apply Algorithm \ref{alg:robustpaths} with $\Ss= \I_{\R'}$ and $\T = \I_{\B_L'}$, and take $T_1,T_2,\dots$ to be the resulting \emph{nontrivial} robust sequences. This automatically satisfies condition (1).  Before beginning the algorithm, by conditions (3)-(5) of Claim \ref{claim:connect}, we have $N_{\I_{\R'}} = |\I_{\R'}| \leq \frac{66}{\ep^2}$ and \[N_{\I_{\B_L'}} = |\I(\B_L')| \geq |\B_L'| -4\cdot \frac 2 \ep \geq C_2 - 4 - \frac {8} \ep \geq \frac{138}{\ep^2}.\] In particular $N_{\I_{\B_L'}} > N_{\I_{\R'}} + \frac 1 \ep$ before beginning the algorithm.  Hence by Lemma \ref{lem:alganal} (2) and (3), at the conclusion of the algorithm, we have $N_{\I_{\R'}} = 0$ and $N_{\I_{\B_L'}} \geq \frac{138}{\ep^2} - |\I_{\R'}| \geq \frac{72}{\ep^2}$.  The first equality implies that there are no more isolated parts in $\I_{\R'}$, so condition (2) is satisfied.  And Lemma \ref{lem:alganal} (1) guarantees that at most $\frac 1 \ep$ of the paths counted by $N_{\I_{\B_L'}}$ are nontrivial, so condition (3) is also satisfied.
\end{proof}

Let now $\J_{\B_L'}\subset \I_{\B_L'}$ be the set of parts in $\B_L'$ which are not traversed by any of the paths $Q_i$ or contained in any $T_i$. Condition (3) of Claim \ref{claim:absorb1} implies $|\J_{\B_L'}| \geq \frac{71}{\ep^2}$.

\begin{claim}\label{claim:absorb2}
There are disjoint (possibly trivial) robust sequences $U_1,U_2,$ $\dots,  U_{C_1-1}$ of parts in $\left(\I_{\LL'}\cup \J_{\B_L'}\right) \cup \B_{R}'$ satisfying the following:
\begin{enumerate}
        \item Each $U_i$ has both end parts in $\B_R'$.    \item Every part in $\B_{R}'$ is contained in a $U_i$.
        \item Every part in $\I_{\LL'}$ is contained in a nontrivial $U_i$.

    \end{enumerate}
    
\end{claim}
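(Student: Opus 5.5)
We apply Algorithm \ref{alg:robustpaths} once more, now with $\Ss = \I_{\LL'}\cup\J_{\B_L'}$ (left parts) and $\T = \B_R'$ (right parts). Parts in $\B_R'$ do not meet any $Q_i$, by condition (3) of Claim \ref{claim:connect}. The parts in $\J_{\B_L'}$ are untouched by the $Q_i$'s and the $T_i$'s. So all of these parts are available.

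\textbf{Step 1: run the algorithm, but draw left parts from $\I_{\LL'}$ first.} In step 2 we always choose the next isolated part $S$ from $\I_{\LL'}$ while one remains, and only then use parts of $\J_{\B_L'}$. Each step 2 needs the connectable set $\mathcal C\subset\B_R'$ to have size $\lceil\frac1\ep\rceil$. Lemma \ref{lem:robustconnect} then gives two parts of $\mathcal C$ robustly connected by $S$, using $d(S,R)>\frac12+\ep$.

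By Lemma \ref{lem:alganal}(2), each iteration lowers $N_{\T}$ by exactly one. Initially $N_{\T}=|\B_R'|\ge C_2-4$. Also $|\I_{\LL'}|\le\frac{66}{\ep^2}$ by Claim \ref{claim:connect}(5). Hence after absorbing all of $\I_{\LL'}$ we still have
\[N_{\T}\ge C_2-4-\tfrac{66}{\ep^2}>\tfrac{1}{\ep}.\]
So the algorithm cannot halt before every part of $\I_{\LL'}$ has been placed in a nontrivial sequence, which gives condition (3). Each such nontrivial sequence has both end parts in $\B_R'$, which gives condition (1). Every part of $\B_R'$ starts as a trivial sequence and remains in some sequence, which gives condition (2).

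\textbf{Step 2: stop when exactly $C_1-1$ sequences remain.} After $\I_{\LL'}$ is exhausted, we keep inserting parts of $\J_{\B_L'}$. Each insertion lowers the number $N_{\T}$ of sequences with ends in $\B_R'$ by exactly one, and we stop as soon as $N_{\T}=C_1-1$. Unused parts of $\J_{\B_L'}$ are simply left out; the claim only requires the $U_i$ to lie inside the stated set.

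To reach $C_1-1$ we need two things:
\begin{itemize}
\item enough parts: $N_{\T}-(C_1-1)\le C_2-2 \le |\J_{\B_L'}|+|\I_{\LL'}|$ is enough, but we only have $|\J_{\B_L'}|\ge\frac{71}{\ep^2}$;
\item step 1 must not stall, which requires $C_1-1\ge\lceil\frac1\ep\rceil$. This holds since $C_1=\frac{80}{\ep^2}$.
\end{itemize}

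\textbf{Main obstacle: checking the counts.} Going from $N_{\T}\le C_2-2=\frac{150}{\ep^2}-2$ down to $C_1-1=\frac{80}{\ep^2}-1$ takes at most $\frac{70}{\ep^2}-1$ insertions. These are available, since $|\J_{\B_L'}|\ge\frac{71}{\ep^2}$ even if no parts of $\I_{\LL'}$ were used. Insertions from $\I_{\LL'}$ only reduce how many $\J$-parts are needed.

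It also has to be checked that absorbing all of $\I_{\LL'}$ does not push $N_{\T}$ below $C_1-1$. The bound above gives $N_{\T}\ge\frac{150}{\ep^2}-4-\frac{66}{\ep^2}=\frac{84}{\ep^2}-4$. This exceeds $\frac{80}{\ep^2}-1$ whenever $\ep<1$ (in fact whenever $\frac{4}{\ep^2}>3$). This arithmetic is the one delicate point; the constants $C_1,C_2$ were clearly chosen to make it work.
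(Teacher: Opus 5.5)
Your proof is correct and follows the paper's argument. You run the same modified Algorithm \ref{alg:robustpaths} with $\Ss=\I_{\LL'}\cup\J_{\B_L'}$ and $\T=\B_R'$, drawing from $\I_{\LL'}$ first and halting when $N_{\B_R'}=C_1-1$. You also use the same two counts: the at most roughly $C_2-(C_1-1)$ needed insertions are covered by $|\J_{\B_L'}|\ge\frac{71}{\ep^2}$, and $C_2-4-|\I_{\LL'}|\ge C_1-1$ ensures all of $\I_{\LL'}$ is absorbed before the halt. Beyond the paper, you make explicit the check $C_1-1\ge\lceil\frac1\ep\rceil$ (so the original halting rule cannot fire early) and use the slightly sharper bound $|\B_R'|\le C_2-2$.
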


\begin{proof}
    
Apply Algorithm \ref{alg:robustpaths} once more with $\Ss= \I_{\LL'}\cup \J_{\B_L'}$ and $\T = \B_{R}'$, with the following modifications:
\begin{itemize}
    \item In step 2, always take the isolated part from $\I_\LL'$ if possible; in other words only take it from $\J_{\B_L'}$ once there are no longer any isolated parts in $\I_\LL'$.
    \item At any step, if $N_{\B_R'} = C_1 - 1$, halt the algorithm.
\end{itemize}

Take the $U_i$'s to be all resulting (possibly trivial) robust sequences containing any part in $\B_R'$.  This guarantees conditions (1) and (2) are satisfied.  

Recall that by Lemma \ref{lem:alganal} each iteration of Algorithm \ref{alg:robustpaths} decreases $N_{\Ss}$ and $N_\T$ by 1.  In its original form the algorithm halts when either $N_{\Ss} = 0$ or $N_\T  < \frac 1 \ep$. With the modifications we halt when $N_{\Ss} = 0$ or $N_\T  = C_1-1$. Before beginning the algorithm, we have
\[C_2\geq |\B_R'| = N_{\B_R'} \geq C_2 - 4.\]
It follows that it takes at least $C_2 - 4 - (C_1 - 1)$, and at most $C_2 - (C_1 - 1)$, iterations of the algorithm for $N_{\B_R'}$ to equal $C_1 - 1$.  Since $|\J_{\B_L'}| \geq \frac{71}{\ep^2} \geq \frac{70}{\ep^2} + 1 = C_2 - (C_1-1) $, we do not run out of isolated parts in $\I_{\LL'}\cup \J_{\B_L'}$ with which to perform step 2 of the algorithm. Hence the algorithm really does halt when $N_{\B_R'} = C_1-1$, rather than stopping earlier; this guarantees that the number of $U_i$'s is precisely $C_1-1$. But since $|\I_{\LL'}| < \frac{66}{\ep^2}< \frac{70}{\ep^2}-3 = C_2 - 4  -( C_1 - 1)$, all of the parts in $\I_{\LL'}$ get used up.  This guarantees that condition (3) is satisfied.  

\end{proof}

\subsubsection{Finishing the cycle}

At this point, we have created the following paths and robust sequences in $G$:
\begin{itemize}
    \item The two short paths $P_1$ and $P_2$ given by Claim \ref{claim:shortpaths}
    \item The long paths $Q_i$ given by Claim \ref{claim:connect}
    \item The robust sequences $T_i$ given by Claim \ref{claim:absorb1}
    \item The robust sequences $U_i$ given by Claim \ref{claim:absorb2}.
\end{itemize}

The endpoints of every path $Q_i$ are connecting vertices in $\A_R'$.   Let $\mathcal Q'$ be the set of parts in $\mathcal A_R'$ which are not traversed by any path $Q_i$.  Number the elements of $\{Q_1,Q_2,\dots\}\cup \mathcal Q'$ by $Q_1',Q_2',\dots, Q_q'$.

The end parts of every sequence $T_i$ are in $\B_L'$.  Let $\mathcal T'$ be the set of parts in $\B_L'$ which are not traversed by any $Q_i$ nor contained in any $T_i$ or $U_i$.  Number the elements of $\{T_1,T_2,\dots\}\cup\mathcal T'$ by $T_1',T_2',\dots, T_t'$.

None of the parts in $\A_L'$ are traversed by any $Q_i$ or contained in any $T_i$ or $U_i$; number the parts of $\A_L'$ by $A_1,A_2,\dots, A_{C_1-2}$.

All of the parts in $\B_R'$ are traversed by one of $U_1, U_2,\dots, U_{C_1-1}$.

The $P_i$'s, $Q_i'$'s, $T_i'$'s, $A_i$'s, and $U_i$'s together are a collection of some subtraversal paths and some robust sequences such that every part in $G$ is traversed by some path or contained in some sequence.  There are $C_1-2$ of the $A_i$'s and $C_1-1$ of the $U_i$'s.  The final step is to compare the number of $Q_i'$'s and $T_i'$'s.

\begin{claim}
    $q + 1 = t$.
\end{claim}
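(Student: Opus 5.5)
We prove $q+1=t$ by a parity count that generalizes Lemma \ref{lem:nr}. View every object in the final collection (the $P_i$'s, $Q_i'$'s, $T_i'$'s, $A_i$'s, $U_i$'s) as an alternating sequence of parts, possibly trivial. For each object, let $\ell$ be its number of parts in $\LL$ and $\rho$ its number of parts in $\R$. An alternating sequence with both end parts in $\R$ has $\rho-\ell=1$. One with both end parts in $\LL$ has $\rho-\ell=-1$. One with an end part in each side has $\rho-\ell=0$.

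Every part of $G$ lies in exactly one object, so summing $\rho-\ell$ over all objects gives $|\R|-|\LL|=0$. It remains to compute each object's contribution.
\begin{itemize}
\item $P_1,P_2$: each has one endpoint in $\A_L$ and one in $\A_R$, so each contributes $0$.
\item $Q_i'$: each is either a path with both endpoints in $\A_R'$ or a single untouched part of $\A_R'$, so each contributes $+1$, for a total of $+q$.
\item $T_i'$: each is either a robust sequence with both end parts in $\B_L'$ or a single part of $\B_L'$, so each contributes $-1$, for a total of $-t$.
\item $A_i$: these are $C_1-2$ trivial sequences in $\LL$, contributing $-(C_1-2)$.
\item $U_i$: these are $C_1-1$ sequences with both end parts in $\B_R'$, contributing $+(C_1-1)$.
\end{itemize}

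Adding these up gives
\[0 = q - t - (C_1-2) + (C_1-1) = q-t+1,\]
which would give $t=q+1$.

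The only real work is checking that the collection genuinely partitions all $2r$ parts, each exactly once. Concretely, one must verify the following.
\begin{itemize}
\item The parts of $\A_L\cup\A_R\cup\B_L\cup\B_R$ used by $P_1,P_2$ are excluded from the primed sets.
\item $\A_R'$ is covered by the $Q_i$'s together with $\mathcal Q'$.
\item $\B_L'$ splits into the parts traversed by the $Q_i$'s, the $T_i$'s, the $U_i$'s (which consume parts of $\J_{\B_L'}$) and $\mathcal T'$.
\item $\B_R'$ is covered by the $U_i$'s (Claim \ref{claim:absorb2}(2)).
\item $\LL'\cup\R'$ is covered by the $Q_i$'s, the $T_i$'s (which cover $\I_{\R'}$) and the $U_i$'s (which cover $\I_{\LL'}$).
\item No $Q_i$ touches $\A_L$ or $\B_R$ (Claim \ref{claim:connect}(3)).
\end{itemize}

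The one subtlety is that the $Q_i$ also pass through parts of $\B_L'$. This causes no trouble, because the $\rho-\ell$ count of a path depends only on its endpoints, whatever parts it traverses. So no separate bookkeeping of interior parts is needed beyond disjointness, which is guaranteed by Claims \ref{claim:connect}, \ref{claim:absorb1} and \ref{claim:absorb2}.
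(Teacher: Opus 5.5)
Your proposal is correct and is essentially the paper's argument. The paper views all the objects as a partition of $\LL\cup\R$ into alternating sequences (with $P_1,P_2$ balanced) and applies the counting of Lemma \ref{lem:nr} to get $N_{\LL}=N_{\R}$, i.e.\ $C_1-2+t=q+C_1-1$; your signed sum of $\rho-\ell$ is the same invariant written out directly.
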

\begin{proof}

By replacing every subtraversal path among the $P_i$'s, $Q_i'$'s, $T_i'$'s, $A_i$'s, and $U_i$'s with the sequence of parts it traverses, we obtain a partition of $\LL\cup \R$ into alternating sequences. The sequences induced by $P_1$ and $P_2$ contain the same number of parts in $\LL$ as in $\R$, and all the remaining sequences have both end parts in $\LL$ or both in $\R$.  Thus, the argument of Lemma \ref{lem:nr} applies, so for this partition $N_{\LL} = N_{\R}$.  In addition, we have
\[N_{\LL} = C_1 - 2 + t, \quad N_{\R} =  q + C_1 - 1\]
as the sequences with both end parts in $\LL$ are the $T_i'$'s and $A_i$'s, and the sequences with both parts in $\R$ are the $Q_i'$'s and $U_i$'s.  So $t$ must equal $q+1$ as desired.

\end{proof}
Since $q + 1 = t$, the sequence $T_1'Q_1'T_2'\dots Q_{q}'T_t'$ contains every $T_i'$ and $Q_i'$. The following sequence of paths and robust sequences now traverses or contains every part in $\LL\cup \R$:
\[T_1'Q_1'T_2'\dots Q_{q}'T_t'P_1U_1A_1U_2A_2\dots U_{C_1-2}A_{C_1-2}U_{C_1-1}P_2.\]
Since each $T_i', U_i$ is a robust sequence with both end parts in $\B_L$ or $\B_R$, and each $P_i, Q_i', A_i$ is either a single part containing a connecting vertex or has endpoints which are connecting vertices in $\A_L$ or $\A_R$, this sequence contains a Hamiltonian cycle in $G$, completing the proof.

\subsection{Proof of Theorem \ref{thm:completeham}}
Fix $\ep > 0$, let $r$ be sufficiently large, and let $G$ be an $r$-partite graph with $d_r(G) > \frac 1 2 + \ep$.  If $r$ is even, Theorem \ref{thm:bipham} immediately implies that $G$ contains a Hamiltonian traversal.  

If $r = 2r'+1$ is odd, then partition the parts of $G$ into  sets $\LL$ of size $r'-1$ and $\R$ of size $r'+2$. We apply the argument of Theorem \ref{thm:bipham}, except when constructing the absorber, we take $\A_L$ to have size $C_1+1$ instead of $C_1$, and $\B_R$ to have size $C_2 + 4$ instead of $C_2$; we now have $|\LL'| = |\R'| = r'-C_1-C_2-2$.  

Pick arbitrary parts $A_1,\dots, A_4 \in \A_L$, $B_1,\dots, B_4\in\B_R$, and for each $1\leq i\leq 4$ let $a_i$ be the connecting vertex in $A_i$ and $S_i = N_{a_i}(B_i)$.  By Lemma \ref{lem:connect4strong} there is a path from $S_1\cup S_2$ to $S_3\cup S_4$ of length 1 or 3, and hence a path $P_1$ from $\{a_1,a_2\}$ to $\{a_3,a_4\}$ of length 3 or 5.  The partition of $\mathcal P$ into sequences induced by $P_1$ (where every part not traversed by $P_1$ is a trivial sequence) has exactly one nontrivial sequence, which has both end parts in $\A_L$, so we have $N_{\A_L} = N_{\A_R} = C_1$ and $N_{\B_R} = C_2$ or $C_2 + 2$.  In the latter case we may distribute one of its isolated parts to $\LL'$ and one to $\R'$ so that we have $N_{\B_R} = N_{\B_L} = C_2$ and $|\LL'| = |\R'|$.  At this point we may apply the remainder of the argument with no further changes to find a Hamiltonian cycle in $G$.

\section{Connected traversals in $K_{r,r}$-partite \\ graphs}\label{sec:bipcon}

In this section we prove Theorem \ref{thm:bipcon}, that $\pi_{K_{r,r}}(\mathcal T_{2r}) = \frac 1 2$ for sufficiently large $r$.  The lower bound $\pi_{K_{r,r}}(\mathcal T_{2r}) \geq \frac 1 2$ is given by a construction of Badakhsian, Falgas-Ravry, and Sharifzadeh, which we reproduce here for completeness.  

\begin{construction}[\cite{badakhshian}, construction 3.4]
Let $H$ be a graph on vertex set $v_1,\dots, v_r$ which is not complete.  We may suppose without loss of generality that $v_1v_2$ is not an edge in $H$.  We now define the following unweighted $H$-partite graph $G$.  The parts of $G$ are $V_1 = \{x_1\}$, $V_2 = \{y_2\}$, and $V_i = \{x_i,y_i\}$ for $2<i\leq r$.  There are edges $x_ix_j$ for all $i\neq j \in [r]\setminus \{2\}$ and $y_iy_j$ for all $i\neq j \in [r]\setminus \{1\}$.  We now have $d(V_i,V_j) = \frac 1 2$ for all $\{i,j\}\neq\{1,2\}$; thus $d_H(G) = \frac 1 2$, but $G$ has no connected traversal as $x_1$ and $y_2$ are in different connected components.
\end{construction}

\begin{proof}[\unskip\nopunct]We now prove the upper bound.  Let $G$ be a $K_{r,r}$-partite graph with $r > 7000$ and $d_r(G) > \frac 1 2$. Let $\mathcal L, \mathcal R$ be, respectively, the sets of parts on the left and right sides of $G$.  A general outline of the proof is as follows.  First, we find a small structure to which the remainder of the graph may be connected.  Then, we partition the remainder of the graph into a small number of connected components, and finally use Lemma \ref{lem:connect4strong} to connect all the components together.

The first step is accomplished by the following claim.

\begin{claim}\label{claim:bipabsorb}
Let $C = 2\lceil\log_2 r\rceil$.  Then $G$ contains distinct parts $A_1,\dots, A_C$ $ \in \mathcal L$, $B_1,\dots, B_{2C+1} \in \mathcal R$ and vertices $v_i\in A_i$ and $w \in B_{2C+1}$ satisfying the following:
\begin{enumerate}
\item $d(v_i,B_{2i-1}) + d(v_i,B_{2i}) \geq 1$ for each $1\leq i\leq C$.
\item $w$ is adjacent to each $v_i$.
\end{enumerate}

\end{claim}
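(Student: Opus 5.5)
We would build the structure greedily, choosing the left parts one at a time while keeping a shrinking set of candidates for $w$. The logarithmic size of $C$ suggests that each step costs at most a factor $2$ in the candidate set, so that after $C=2\lceil\log_2 r\rceil$ steps something of relative size at least $r^{-2}$ survives.

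\textbf{Step 1: single-vertex averaging.} For a vertex $v$ in a left part, write $f(v)$ for the average of $d(v,R)$ over a pool $\mathcal R_0\subset\mathcal R$ of still-unused right parts. Since $d(L,R)>\frac12$ for every $L\in\mathcal L$, $R\in\mathcal R$, averaging over $v\in L$ as in Lemma \ref{lem:robustconnect} gives $\frac1{|L|}\sum_{v\in L}f(v)>\frac12$. Any $v$ with $f(v)\ge\frac12$ has two parts $B,B'\in\mathcal R_0$ with $d(v,B)+d(v,B')\ge1$: take its two largest densities. So condition (1) reduces to choosing each $v_i$ with $f(v_i)\ge\frac12$ relative to the current pool, and then removing its two parts $B_{2i-1},B_{2i}$ from the pool. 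The pool stays of size at least $r-4C-1\gg C$ because $r>7000$.

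\textbf{Step 2: keeping the common neighbour alive.} Fix $B_{2C+1}=W\in\mathcal R$ at the start and exclude it from the pool. Maintain $X_0=W\supseteq X_1\supseteq\dots$, with $X_i=X_{i-1}\cap N(v_i)$, so that every vertex of $X_C$ is a valid $w$. We want, at each step, a fresh part $L$ and a vertex $v\in L$ with $f(v)\ge\frac12$ and $|N(v)\cap X_{i-1}|\ge\frac12|X_{i-1}|$. To find it, consider a weight in which $X_{i-1}$ is treated as a part on its own, and sum over $v\in L$ the quantity $f(v)+|N(v)\cap X_{i-1}|/|X_{i-1}|$. If every $v$ were bad, each $v$ would have either $f(v)<\frac12$ or fewer than half of $X_{i-1}$ as neighbours.

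\textbf{Main obstacle.} $X_{i-1}$ is an arbitrary subset of $W$, so $d(L,X_{i-1})$ is not controlled by the hypothesis $d(L,W)>\frac12$. This is where there is freedom, since there are about $r$ unused left parts. We would average over all unused $L$ simultaneously: summing over unused left parts, $\sum_L\sum_{x\in X_{i-1}}d(x,L)$ is at least $|X_{i-1}|$ times the minimum over $x$. Choosing $W$ and its candidates appropriately at the start, for instance restricting $X_0$ to vertices $x\in W$ whose average density to the left parts exceeds $\frac12$ (these exist by averaging $d(W,L)>\frac12$ over $L$), should give some unused $L$ with $d(L,X_{i-1})\gtrsim\frac12$. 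Combined with $\mathbb E_{v\in L}f(v)>\frac12$, some $v$ would then have $f(v)+|N(v)\cap X_{i-1}|/|X_{i-1}|\ge1$.

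The remaining difficulty is to upgrade this sum bound to both terms being at least $\frac12$. We would first try to pay a constant factor in the candidate set, using thresholds $\frac12-\delta$ and absorbing the loss into the slack $r^{-2}$ versus $1/|W|$ via blow-up. If needed, we would instead pair $v$ with $W$ itself as one of its two parts and use a fresh part only for the second. If this balancing works, after $C$ steps $|X_C|\ge 2^{-C}|X_0|>0$, and any $w\in X_C$ completes the claim.
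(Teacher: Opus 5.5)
There is a genuine gap, and it is the step you flag yourself. Your argument never produces, at each round, a vertex $v$ that \emph{simultaneously} has $f(v)\ge\frac12$ (which is what yields (1)) and is adjacent to a constant fraction of $X_{i-1}\subseteq W$.

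A lower bound on the average over $v\in L$ of $f(v)+|N(v)\cap X_{i-1}|/|X_{i-1}|$ cannot give this, because the two properties can sit on disjoint vertex sets. Here is an example with $W$ fixed in advance and excluded from the pool. Let each left part consist of a $\gamma$-fraction $L^a$ ($\gamma<\frac12$) joined to all of every right part other than $W$ and to nothing in $W$. The rest, $L^b$, is joined to all of $W$ and to a fixed $(\frac12-\eta)$-fraction of each other right part, where $0<\eta<\gamma/2$. Then:
\begin{itemize}
\item every left--right density exceeds $\frac12$;
\item every $x\in W$ has density $1-\gamma$ to every left part, so your restriction of $X_0$ removes nothing;
\item $d(L,X)=1-\gamma$ for every $L$ and every $X\subseteq W$;
\item yet $f\equiv1$ on the $L^a$'s, which have no neighbours in $W$, and $f\equiv\frac12-\eta$ on the $L^b$'s, so no vertex qualifies.
\end{itemize}
A cleverer choice of $W$ escapes this particular example, but you give no rule for choosing $W$ that works in general.

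Neither repair you suggest is available. First, (1) is the exact inequality fed into Lemma \ref{lem:connect4strong}, and the hypothesis is only $d_r(G)>\frac12$ with no quantified slack. Blow-ups do not change densities, so a threshold $\frac12-\delta$ cannot be absorbed. Second, using $W$ itself as one of $B_{2i-1},B_{2i}$ violates the required distinctness of $B_{2C+1}$ from $B_1,\dots,B_{2C}$. In the application, $B_{2C+1}$ must host $w$ while the other $B_j$ host path vertices. Note also that the halving bookkeeping is not the issue: any positive fraction of a nonempty set is nonempty. The logarithm in $C$ plays no role in this claim; it is needed only later, against $c\le\lceil\log_2 r\rceil$ after Claim \ref{claim:stars}.

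The missing idea is to stop requiring every $v_i$ to be adjacent to $w$, and to choose the part containing $w$ afterwards, from among the parts over which $f$ averages. The paper does this as follows.
\begin{enumerate}
\item Run only your Step 1, for $25C$ rounds, and set aside the set $\mathcal R'$ of the more than $0.8r$ right parts never used as a $B_j$.
\item Since $\sum_{B\in\mathcal B_i}d(v_i,B)>\frac12|\mathcal B_i|$, fewer than $\frac58|\mathcal B_i|<0.8|\mathcal R'|$ parts of $\mathcal B_i\supseteq\mathcal R'$ have $d(v_i,B)<0.2$. So each $v_i$ has density at least $0.2$ to at least $0.2|\mathcal R'|$ parts of $\mathcal R'$.
\item Double counting gives $\tilde B\in\mathcal R'$ with $d(v_i,\tilde B)\ge0.2$ for at least $5C$ indices $i$, and then a vertex $w\in\tilde B$ adjacent to at least $C$ of those $v_i$.
\item Keep those $C$ indices (with their $A_i,B_{2i-1},B_{2i}$) and set $B_{2C+1}=\tilde B$.
\end{enumerate}
The correlation you could not enforce step by step comes for free here. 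Because $\tilde B$ lies in every pool $\mathcal B_i$, the same averaging that gives (1) also gives adjacency to $w$, at the cost of discarding a constant fraction of the $v_i$.
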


\begin{proof}

We iterate the following process to produce a sequence of parts $A_1,\dots,$ $A_{25C} \in \mathcal L$, $B_1,\dots, B_{50C} \in \mathcal R$ with vertices $v_i\in A_i$ such that $d(v_i,B_{2i-1}) + d(v_i,B_{2i}) \geq 1$ for each $i$.  On step $i$:
\begin{enumerate}
\item Choose $A_i$ to be any part in $\mathcal L\setminus \{A_1,\dots, A_{i-1}\}$.
\item Let $\mathcal B_i = \mathcal R\setminus \{B_1,\dots, B_{2(i-1)}\}$.  By averaging (as in the proof of Lemma \ref{lem:robustconnect}), we may choose a vertex $v_i\in A_i$ such that 

$\sum_{B\in \mathcal B_i}d(v_i,B) > \frac 1 2 |\mathcal B_i|$.
\item Again by averaging, we may choose $B_{2i-1}, B_{2i}\in \mathcal B_i$ such that 

$d(v_i,B_{2i-1})+d(v_i,B_{2i}) \geq 1$.
\end{enumerate}
Let $\mathcal R' = \mathcal R\setminus \{B_1,\dots, B_{50C}\}$, with size $r-50C = r-100\lceil \log_2 r\rceil > 0.8r$ (for $r > 7000$).  For each $1\leq i\leq 25C$, $v_i$ was chosen such that
$\sum_{B\in \mathcal B_i}d(v_i,B) > \frac 1 2 |\mathcal B_i|.$
Let $m$ be the number of $B\in \mathcal B_i$ with $d(v_i,B)<0.2$.  Then, we have
\[\frac 1 2 |\mathcal B_i|< \sum_{B\in \mathcal B_i}d(v_i,B) < 1\left(|\mathcal B_i|-m\right) + 0.2m,\]
so $m < \frac 5 8 |\mathcal B_i|  < \frac 5 8 r < \frac{25}{32}|\mathcal R'| < 0.8 |\mathcal R'|$.

It follows that for each $i$, at least $0.2|\mathcal R'|$ parts $B\in \mathcal R'$ have $d(v_i,B) > 0.2$.  Thus, there is some part $\tilde B\in \mathcal R'$ such that $d(v_i,\tilde B)>0.2$ for at least $0.2\cdot 25C$ values of $i$.  Each of these $v_i$'s are adjacent to at least $0.2|\tilde B|$ of the vertices in $\tilde B$, so there must be some vertex $w\in \tilde B$ which is adjacent to at least $0.2^2\cdot 25C = C$ of these $v_i$'s.  Renumber the parts so that $\tilde B= B_{2C+1}$ and $w\in \tilde B$ is adjacent to each of $v_1,\dots, v_C$, and we have the desired structure.
\end{proof}

Applying the same argument in the opposite direction, we obtain a symmetric structure.

\begin{claim}
   $G$ contains parts $A_1',\dots, A_C' \in \mathcal R$, $B_1',\dots, B_{2C+1}' \in \mathcal L$, distinct from the $A_i$'s and $B_i$'s, and vertices $v_i'\in A_i'$ and $w' \in B_{2C+1}'$ satisfying the conditions of Claim \ref{claim:bipabsorb}.
\end{claim}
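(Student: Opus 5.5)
The plan is to rerun the proof of Claim \ref{claim:bipabsorb} with the roles of $\mathcal L$ and $\mathcal R$ swapped, restricted to parts not already used. Let $\mathcal L_0 = \mathcal L\setminus\{B_1',\dots\}$, more precisely $\mathcal L_0 = \mathcal L\setminus\{A_1,\dots,A_C\}$ and $\mathcal R_0 = \mathcal R\setminus\{B_1,\dots,B_{2C+1}\}$. These are the parts still available after Claim \ref{claim:bipabsorb}. We have $|\mathcal R_0| \geq r - 2C-1$ and $|\mathcal L_0| = r - C$. Since $C = 2\lceil\log_2 r\rceil$ is tiny compared with $r$ when $r>7000$, both sets still have size close to $r$. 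Also, $d(P,Q)>\frac12$ holds for every pair $P\in\mathcal R_0$, $Q\in\mathcal L_0$.

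First I will iterate the greedy process in the other direction. On step $i$, for $1\le i\le 25C$, choose a fresh part $A_i'\in\mathcal R_0$. Let $\mathcal B_i'$ be $\mathcal L_0$ minus the previously chosen $B'$'s. By averaging as in Lemma \ref{lem:robustconnect}, pick $v_i'\in A_i'$ with $\sum_{B\in\mathcal B_i'} d(v_i',B) > \frac12|\mathcal B_i'|$. Averaging again, pick two parts $B_{2i-1}',B_{2i}'\in\mathcal B_i'$ with $d(v_i',B_{2i-1}')+d(v_i',B_{2i}')\geq 1$. This needs $25C$ parts in $\mathcal R_0$ and $50C$ in $\mathcal L_0$, which is fine for $r>7000$.

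Next I will rerun the counting step. Set $\mathcal L' = \mathcal L_0\setminus\{B_1',\dots,B_{50C}'\}$. Then $|\mathcal L'| \geq r - 51C = r-102\lceil\log_2 r\rceil$. I need $|\mathcal L'|$ to be large enough that $\frac58|\mathcal B_i'| < 0.8|\mathcal L'|$.

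This is the only place where the numbers must be checked, and I expect it to be the main (though minor) obstacle. The original proof used $r-100\lceil\log_2 r\rceil > 0.8r$, and the extra $2\lceil\log_2 r\rceil$ loss erodes that margin. However, the inequality really needed is only $\frac58 r < 0.8|\mathcal L'|$, i.e. $|\mathcal L'| > 0.78125\,r$. At $r = 7000$ we have $\lceil \log_2 r\rceil = 13$, so $102\cdot 13 = 1326 < 0.2\cdot 7000$, which still gives $|\mathcal L'|>0.8r$. Larger $r$ only helps, so the same constants go through.

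Then I will finish exactly as before. For each $i$, at least $0.2|\mathcal L'|$ parts $B\in\mathcal L'$ have $d(v_i',B)>0.2$. Pigeonholing gives a part $\tilde B'\in\mathcal L'$ with $d(v_i',\tilde B')>0.2$ for at least $5C$ indices $i$. Double counting then gives a vertex $w'\in\tilde B'$ adjacent to at least $C$ of those $v_i'$.

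Finally I renumber so that $\tilde B' = B_{2C+1}'$ and the adjacent vertices are $v_1',\dots,v_C'$, discarding the unused parts. Distinctness from the $A_i$'s and $B_i$'s holds by construction, since all parts were drawn from $\mathcal L_0\cup\mathcal R_0$.
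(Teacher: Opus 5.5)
Your proposal is correct and is the same approach as the paper, which proves this claim only by saying to apply the argument of Claim \ref{claim:bipabsorb} with the roles of $\mathcal L$ and $\mathcal R$ swapped. Your explicit bookkeeping fills in a detail the paper leaves implicit: you draw only from parts not already used, and you check that this costs just $C$ extra parts of $\mathcal L$, so $|\mathcal L'| = r - 102\lceil\log_2 r\rceil > 0.8r$ still holds for $r>7000$.
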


Let $\A_L = \{A_1,\dots, A_C\}\subset \mathcal L$, $\B_R = \{B_1,\dots, B_{2C+1}\} \subset \mathcal R$ and $\A_R = \{A_1',\dots, A_C'\}\subset \mathcal R$, $\B_L = \{B_1',\dots, B_{2C+1}'\} \subset \mathcal L$. At this point we have two connected components: one traversing $\A_L\cup\B_R$ and one traversing $\A_R\cup\B_L$.  Let $\LL' = \LL\setminus(\A_L\cup\B_L)$ and $\R' = \R\setminus(\A_R\cup\B_R)$.  The next step is to partition $\LL'$ and $\R'$ into a small number of connected components, which is accomplished by the next claim.

\begin{claim}\label{claim:stars}
There exists an integer $c_1$ and distinct parts $P_1,\dots, P_{c_1}\in \mathcal L'$ and $Q_1,\dots, Q_{2c_1}\in\mathcal R'$, and vertices $u_i\in P_i$, such that the following hold: 
\begin{enumerate}
\item $d(u_i,Q_{2i-1}) + d(u_i,Q_{2i}) \geq 1$ for each $1\leq i \leq c_1$.
\item Every part in $\mathcal R'$, except for at most one, contains a neighbor of at least one of $u_1,\dots, u_{c_1}$.
\item $c_1 \leq \lceil \log_2 r \rceil$.
\end{enumerate}

\end{claim}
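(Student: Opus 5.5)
We build the $P_i$, $Q_i$ and $u_i$ greedily, using a halving argument, as in the proof of Claim \ref{claim:bipabsorb}. Throughout we keep a set $\mathcal U\subset\mathcal R'$ of ``uncovered'' parts: those containing no neighbor of any $u_j$ chosen so far. Initially $\mathcal U=\mathcal R'$. The goal is that each step at least roughly halves $|\mathcal U|$, while also producing the pair $Q_{2i-1},Q_{2i}$.

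At step $i$ we assume $|\mathcal U|\geq 2$ and proceed as follows.
\begin{enumerate}
\item Choose any part $P_i\in\mathcal L'$ not used so far. This is possible because $|\mathcal L'|\geq r-3C-1$, which is much larger than $\lceil\log_2 r\rceil$ for $r>7000$.
\item Since $d(P_i,B)>\frac12$ for every $B\in\mathcal U$, averaging as in Lemma \ref{lem:robustconnect} gives a vertex $u_i\in P_i$ with $\sum_{B\in\mathcal U}d(u_i,B)>\frac12|\mathcal U|$.
\item Order the parts of $\mathcal U$ by decreasing $d(u_i,B)$ and take $Q_{2i-1},Q_{2i}$ to be the top two. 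Their densities sum to more than $2\cdot\frac12=1$, because the average of the top two is at least the overall average. This gives condition (1).
\end{enumerate}

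The number of parts $B\in\mathcal U$ with $d(u_i,B)=0$ is at most $|\mathcal U|-\sum_{B\in\mathcal U}d(u_i,B)<\frac12|\mathcal U|$. So more than half of $\mathcal U$ now contains a neighbor of $u_i$, and the new uncovered set has size less than $\frac12|\mathcal U|$. In particular $Q_{2i-1}$ and $Q_{2i}$ (both with positive density, since their sum exceeds $1$) become covered. This guarantees the $Q$'s chosen at different steps are distinct: later $Q$'s are picked from the current $\mathcal U$, which excludes all earlier $Q$'s. The $Q$'s lie in $\mathcal R'$ by construction.

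We stop once $|\mathcal U|\leq 1$, which gives condition (2). The number of steps is $c_1$, and since $|\mathcal U|<r\cdot 2^{-i}$ after $i$ steps, we get $|\mathcal U|\le 1$ after at most $\lceil\log_2 r\rceil$ steps. This gives (3).

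The one subtle point is the final step, where $|\mathcal U|$ may be $2$ or $3$. We still need two distinct parts in $\mathcal U$, which holds whenever $|\mathcal U|\ge 2$; that is exactly the condition under which we continue. The strict inequality $|\mathcal U_{\rm new}|<\frac12|\mathcal U|$ then gives $|\mathcal U_{\rm new}|\le 1$ when $|\mathcal U|\le 3$, and more generally $|\mathcal U_{\rm new}|\le\lceil|\mathcal U|/2\rceil-1$. So the bound $c_1\le\lceil\log_2 r\rceil$ holds; with the halving written as $|\mathcal U|\mapsto\lfloor(|\mathcal U|-1)/2\rfloor$ it is actually comfortably better.

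If $|\mathcal R'|\le 1$ from the start, take $c_1=0$. The main thing to double-check is that $\mathcal L'$ has enough unused parts, which follows from $r>7000$.
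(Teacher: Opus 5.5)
Your proposal is correct and is essentially the paper's argument. Both proofs pick a fresh part of $\mathcal L'$ and use averaging to get $u_i$ with $\sum_{B\in\mathcal U}d(u_i,B)>\frac12|\mathcal U|$. Both then take two best parts of $\mathcal U$ for condition (1) and note that fewer than half of $\mathcal U$ can have zero density to $u_i$, which gives the halving bound $c_1\le\lceil\log_2 r\rceil$. You are in fact slightly more careful than the paper: you explicitly check that the $Q$'s are distinct and that $\mathcal L'$ has enough unused parts.
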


\begin{proof}
We iterate the following process, which is essentially the same as the one used in Claim \ref{claim:bipabsorb}. For each $1\leq i$, let $\mathcal Q_i$ be the set of parts in $\mathcal R'$ which contain no neighbors of any of $u_1,\dots, u_{i-1}$.  On step $i$,
\begin{enumerate}
\item Choose $P_i$ to be any part in $\mathcal L'\setminus \{P_1,\dots, P_{i-1}\}$.
\item By averaging, we may choose a vertex $u_i\in P_i$ such that 

$\sum_{Q\in \mathcal Q_i}d(u_i,Q) > \frac 1 2 |\mathcal Q_i|$.
\item Again by averaging, as long as $|\mathcal Q_i| \geq 2$, we may choose $Q_{2i-1}, Q_{2i}\in\mathcal Q_i$ such that $d(u_i,Q_{2i-1}) + d(u_i,Q_{2i}) > 1$.
\end{enumerate}
We run this process until $\mathcal |\mathcal Q_i|\leq 1$, at which point we can no longer perform step 3.  Let $c_1$ be the largest $i$ for which $|\mathcal Q_i|\geq 2$; then $P_1,\dots, P_{c_1}$, $ Q_1,\dots, Q_{2c_1}$, and $u_1,\dots, u_{c_1}$ satisfy condition (1), and $|\mathcal Q_{i+1}|\leq 1$ implies condition (2).  To bound $c_1$, we claim that for each $i$, $|\mathcal Q_{i+1}| \leq \frac 1 2 |\mathcal Q_{i}|$.  Indeed, since $u_{i}$ was chosen such that $\sum_{Q\in \mathcal Q_{i}}d(u_i,Q) > \frac 1 2 |\mathcal Q_i|$, and each term in the sum is bounded by 1, at most $\frac 1 2 |\mathcal Q_i|$ terms in the sum may be equal to 0; thus, at least $\frac 1 2 |\mathcal Q_i|$ parts in $\mathcal Q_i$ contain a neighbor of $u_i$.  Therefore, $c_1 \leq \lceil \log_2|\mathcal R'|\rceil \leq \lceil \log_2 r\rceil$.

\end{proof}

Denote by $T_R$ the part in $\mathcal R'$ which does not contain any neighbor of $u_1,\dots, u_{c_1}$, if one exists.  Let $\mathcal L'' = \mathcal L'\setminus \{P_1,\dots, P_{c_1}\}$ and $\mathcal R'' = \mathcal R'\setminus \{R_1,\dots, R_{2c_1},T_R\}$.  Again we may apply the same argument in the opposite direction to obtain a symmetric structure.

\begin{claim}
    $G$ contains parts $P_1',\dots, P_{c_2}'\in\mathcal R''$, $Q_1',\dots, Q_{2c_2}'\in\mathcal L''$, and vertices $u_i'\in P_i'$ such that $d(u_i',Q'_{2i-1}) + d(u_i',Q_{2i}') \geq 1$ for each $1\leq i \leq c_2$, at most one part in $\mathcal L''$ does not contain any neighbor of any $u_1',\dots,u_{c_2}'$, and $c_2\leq \lceil \log_2 r \rceil$.
\end{claim}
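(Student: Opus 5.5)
This is the mirror image of Claim \ref{claim:stars}, with the roles of left and right swapped: the centers now lie in $\mathcal R''$ and the covered parts lie in $\mathcal L''$. I would run the same greedy halving process. For each $i\geq 1$, let $\mathcal Q_i'$ be the set of parts in $\mathcal L''$ that contain no neighbor of any of $u_1',\dots,u_{i-1}'$, so that $\mathcal Q_1' = \mathcal L''$. Step $i$ has three parts.

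First, pick a part $P_i'\in\mathcal R''\setminus\{P_1',\dots,P_{i-1}'\}$. Second, average over the vertices of $P_i'$, exactly as in the proof of Lemma \ref{lem:robustconnect}. Since $d(P_i',Q)>\frac 1 2$ for every $Q\in\mathcal Q_i'$, we have $\sum_{Q}d(P_i',Q)>\frac12|\mathcal Q_i'|$, so some vertex $u_i'\in P_i'$ satisfies $\sum_{Q\in\mathcal Q_i'}d(u_i',Q)>\frac 1 2|\mathcal Q_i'|$. Third, as long as $|\mathcal Q_i'|\geq 2$, average over pairs: if every pair had $d(u_i',Q)+d(u_i',Q')<1$, summing over a perfect pairing (or over all pairs) would contradict the previous bound. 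This gives two parts $Q_{2i-1}',Q_{2i}'\in\mathcal Q_i'$ with $d(u_i',Q_{2i-1}')+d(u_i',Q_{2i}')\geq 1$.

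I stop as soon as $|\mathcal Q_i'|\leq 1$ and set $c_2$ to be the last index with $|\mathcal Q_i'|\geq 2$. Then the final $\mathcal Q_{c_2+1}'$ has at most one part, which is exactly the statement that at most one part of $\mathcal L''$ has no neighbor of any $u_j'$.

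The halving bound goes as in Claim \ref{claim:stars}. Each term $d(u_i',Q)$ is at most $1$, and their sum exceeds $\frac12|\mathcal Q_i'|$, so more than half of the parts in $\mathcal Q_i'$ contain a neighbor of $u_i'$. Hence $|\mathcal Q_{i+1}'|\leq\frac12|\mathcal Q_i'|$, which gives $c_2\leq\lceil\log_2|\mathcal L''|\rceil\leq\lceil\log_2 r\rceil$.

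The main obstacle is checking that the process never runs out of fresh parts in $\mathcal R''$ to serve as centers $P_i'$. It uses at most $\lceil\log_2 r\rceil$ of them. Meanwhile $|\mathcal R''|\geq r-(C+2C+1)-2c_1-1\geq r-O(\log r)$, which is far more than $\lceil\log_2 r\rceil$ when $r>7000$.

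I also need the chosen $Q_j'$ to be distinct and disjoint from the $P_j'$. They are distinct automatically: each $Q_{2i-1}',Q_{2i}'$ lies in $\mathcal Q_i'$, and once $u_i'$ is chosen, both have positive density to $u_i'$ (their densities sum to at least $1$, with each at most $1$), so they contain a neighbor of $u_i'$ and leave $\mathcal Q_{i+1}'$. They are disjoint from the $P_j'$ because they lie on the opposite side.
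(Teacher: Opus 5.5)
Your proposal is correct and is the paper's argument. The paper proves this claim only by saying to run the greedy halving process of Claim~\ref{claim:stars} with the roles of $\mathcal L''$ and $\mathcal R''$ swapped, which is what you do; your check that $\mathcal R''$ has enough fresh centers is a harmless detail the paper leaves implicit.

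One small slip is in your distinctness remark. Densities that sum to at least $1$, each at most $1$, need not both be positive (take $1$ and $0$). Use the strict inequality $d(u_i',Q_{2i-1}')+d(u_i',Q_{2i}')>1$ instead. Your averaging already gives it: $\sum_{Q}d(u_i',Q)>\frac12|\mathcal Q_i'|$ strictly, so not every pair can have sum $\le 1$. This is also the inequality the paper's step 3 uses.
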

Again, denote by $T_L$ the part in $\mathcal L''$ which does not contain any neighbor of any $u_1',\dots,u_{c_2}'$, if one exists.

We finish by finding a connected traversal.  Let $c = \max(c_1, c_2)$; we have $c \leq \left\lceil\log_2 r\right\rceil <  C-2$.  By Lemma \ref{lem:connect4strong}, the following paths exist in $G$:

\begin{itemize}
\item For each $1\leq i \leq c_1$, a path using at most one vertex in each of $B_{2i-1}, B_{2i}$, $ Q'_{2i-1}, Q'_{2i}$ with one endpoint in $N_{B_{2i-1}}(v_i)\cup N_{B_{2i}}(v_i)$ and one in $N_{Q_{2i-1}'}(u_i')$ $\cup N_{Q_{2i}'}(u_i')$.
\item For each $1\leq i\leq c_2$, a path using at most one vertex in each of $B_{2i-1}', B_{2i}'$, $Q_{2i-1}, Q_{2i}$ with one endpoint in $N_{B_{2i-1}'}(v_i')\cup N_{B_{2i}'}(v_i')$ and one in $N_{Q_{2i-1}}(u_i)$ $\cup N_{Q_{2i}}(u_i)$.
\item A path using at most one vertex in each of $B_{2c+1},B_{2c+2},B_{2c+1}',B_{2c+2}'$ with one endpoint in $N_{B_{2c+1}}(v_{c+1}) \cup N_{B_{2c+2}}(v_{c+1})$ and one in 

$N_{B_{2c+1}'}(v_{c+1}')\cup N_{B_{2c+2}'}(v_{c+1}')$.
\end{itemize}

And furthermore since $d(T_L,B_{2c+3}), d(T_L, B_{2c+4}), d(T_R,B_{2c+3}')$ and $d(T_R, B_{2c+4}')$ are all greater than $\frac 1 2$, we are guaranteed to find
\begin{itemize}
\item An edge with one endpoint in $T_L$ and one in 

$N_{B_{2c+3}}(v_{c+2})\cup N_{B_{2c+4}}(v_{c+2})$ (if $T_L$ exists).
\item An edge with one endpoint in $T_R$ and one in 

$N_{B_{2c+3}'}(v_{c+2}')\cup N_{B_{2c+4}'}(v_{c+2}')$ (if $T_R$ exists).
\end{itemize}

By choosing vertices $v_i,v_i',u_i,u_i',w,$ and $w'$ from their respective parts, choosing vertices in each $B_i, B_i', Q_i, Q_i', T_L$, and $T_R$ to form the above paths, and choosing vertices adjacent to some $v_i, v_i', u_i,$ or $u_i'$ from all other parts, we obtain a connected traversal.

\end{proof}

\section{$K_t$-factor traversals}\label{sec:factors}

In this section we prove Theorems \ref{thm:ktfactor} and \ref{thm:bipfactor}, and sketch the proof of Theorem \ref{thm:oddcycfactor}. We begin with a general definition and an observation regarding the critical density for a general $F$-factor traversal.

\begin{definition}
Let $F$ be a graph.  An \emph{$F$-subtraversal} is a subtraversal of $G$ which is isomorphic to $F$.  We say that an $r$-partite graph $G$ with vertex partition $\mathcal P = \{P_1,\dots, P_r\}$ is \emph{locally $F$-covered} if for every part $P\in\mathcal P$, $G$ contains an $F$-subtraversal which contains some vertex in $P$.
\end{definition}

We observe that if an $r$-partite graph $G$ is not locally $F$-covered, it cannot contain an $F$-factor traversal.  The next lemma states that this sort of local obstruction is, in an asymptotic sense, the only barrier to finding an $F$-factor traversal.

\begin{lemma}\label{lem:factorabsorb}
Let $F$ be a graph on $t$ vertices, and let $\alpha \in [0,1]$.  Suppose that there is a constant $r_F$ such that, for all $r > r_F$, any $r$-partite graph $G$ with $d_r(G) > \alpha$ is locally $F$-covered.   Then there exists $r_0$ such that $\pi_{rt}(rF) \leq \alpha$ for all $r > r_0$.
\end{lemma}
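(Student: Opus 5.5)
We want: if every $r$-partite graph with density $>\alpha$ (for $r>r_F$) is locally $F$-covered, then for large $r$ every $rt$-partite $G$ with $d_{rt}(G)>\alpha$ contains an $F$-factor traversal. The plan is an absorption argument built from the local covering hypothesis alone.

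First I will build many small \emph{absorbing gadgets}. Fix a large constant $m > r_F$. For any $m$ parts, the induced $m$-partite subgraph has density $>\alpha$, so it is locally $F$-covered. This gives an $F$-subtraversal through any prescribed part using only the chosen $m$ parts. The gadget notion I will use is as follows. A collection $\mathcal D$ of parts with $|\mathcal D|\equiv 0 \pmod t$ \emph{absorbs} a part $P$ if both $\mathcal D$ and $\mathcal D\cup\{P,P_2,\dots,P_t\}$ admit $F$-factor traversals for some extra parts $P_2,\dots,P_t$ that are carried along.

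In practice I will do the absorption greedily at the end rather than insist on a rigid gadget. Reserve a set $\mathcal A$ of $K$ parts with $K$ a large constant multiple of $t$, and tile everything else greedily. Iteratively apply local coverage inside the unused parts together with a batch of fresh parts. Whenever at least $m$ unused parts remain, pick any $m$ of them and find an $F$-subtraversal inside, which removes $t$ parts. This covers all but fewer than $m$ parts of the complement of $\mathcal A$.

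The leftover set $\mathcal X$ has fewer than $m$ parts. Its size is divisible by $t$ after adjusting $|\mathcal A|$ so that $rt-|\mathcal A|$ is a multiple of $t$. The leftover parts are then absorbed into $\mathcal A$ as follows:
\begin{enumerate}
\item Take $\mathcal X\cup\mathcal A$, of constant size between $K$ and $K+m$.
\item Run the greedy procedure again on it, but always first cover a part of $\mathcal X$. Local coverage applied to a sub-collection of at least $m$ unused parts that includes the chosen part of $\mathcal X$ gives an $F$-copy through that part.
\item Each such step consumes one part of $\mathcal X$ and at most $t-1$ parts of $\mathcal A$. After at most $m$ steps, $\mathcal X$ is empty.
\end{enumerate}
At that point at most $K+m - tm$ parts of $\mathcal A$ remain, still a constant number, and they still need a perfect tiling. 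This is the crux, since greedy alone always leaves fewer than $m$ stragglers.

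To fix this I will use genuine absorbers. Before anything else, by Ramsey/supersaturation-type pigeonholing over the constant-size vertex structures, I will find for each $t$-subset pattern a robust switch. Concretely, I will find structures $\mathcal D$ of $t^2$ parts where both $\mathcal D$ and $\mathcal D$ minus any $t$-set $\mathcal Y$ of given type can be tiled. A cleaner route is the standard Rödl–Ruciński–Szemerédi absorbing lemma adapted to parts. For a $t$-set $\mathcal Y$ of parts, call a $t^2$-set $\mathcal D$ an absorber if both $\mathcal D$ and $\mathcal D\cup\mathcal Y$ have $F$-factor traversals. Build it by choosing an $F$-copy using one part from $\mathcal Y$ plus $t-1$ fresh parts, then covering those fresh parts by $F$-copies with fresh parts, via local coverage.

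I then show that each $\mathcal Y$ has $\Omega(r^{t^2})$ absorbers, using that local coverage applies to every $m$-subset. A random selection then yields a family of $O(\log r)$ or $\gamma r$ disjoint absorbers such that every $\mathcal Y$ has many of them. Finally, greedy covers all but fewer than $m$ parts, the leftover is split into $t$-sets, and each is absorbed.

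The main obstacle is the counting of absorbers. Local coverage only guarantees existence of one $F$-copy through a part within each $m$-set, not many. I would handle this by double counting over random $m$-subsets containing the relevant parts: each yields a copy, and each copy lies in at most $\binom{rt}{m-t}$ such subsets. This gives a positive fraction of all $t$-sets of parts that support an $F$-copy through a given part. That density is then iterated $t$ times to obtain $\Omega(r^{t^2-t})$ absorbers per $\mathcal Y$.
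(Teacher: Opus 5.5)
There is a genuine gap, and it is exactly at the step you call the crux: you never construct an absorber.

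Your recipe takes an $F$-copy through one part of $\mathcal Y$ plus $t-1$ fresh parts, then covers those fresh parts by $F$-copies using further fresh parts. This does not produce a $t^2$-set $\mathcal D$ with both $\mathcal D$ and $\mathcal D\cup\mathcal Y$ tileable. Each use of local coverage places a prescribed part in an $F$-copy but brings in $t-1$ new parts that must themselves be covered, so the process never closes up. It is the same straggler problem you found for greedy tiling, moved inside the gadget. Local coverage never lets you tile a \emph{prescribed} finite set of parts exactly.

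The counting paragraph has the same defect. Iterating the link-density bound $t$ times counts choices of $W_1,\dots,W_t$ with each $W_i\cup\{Y_i\}$ supporting a copy. That handles only the tiling of $\mathcal D\cup\mathcal Y$. Nothing controls the tiling of $\mathcal D$ alone. That would need the switching structure: a substitute $U_i$ completing the same $W_i$, with $\{U_1,\dots,U_t\}$ itself supporting a copy. This is precisely what local coverage does not provide, so the claimed $\Omega(r^{t^2-t})$ absorbers per $\mathcal Y$ is unsupported.

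The missing idea is hypergraph Ramsey. Let $\mathcal H$ be the $t$-uniform hypergraph on the parts whose edges are the $t$-sets of parts containing an $F$-subtraversal. The hypothesis says every $r_F+1$ parts contain an edge of $\mathcal H$. Hence for $r$ large, $\mathcal H$ contains a clique $\mathcal A$ of any prescribed constant size, i.e.\ a set of parts in which \emph{every} $t$-subset supports an $F$-copy. Such a clique is a universal absorber.

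With it, your first, abandoned plan goes through essentially verbatim, and this is the paper's proof:
\begin{itemize}
\item Take $|\mathcal A| = tr_F$.
\item Tile the remaining parts greedily, leaving at most $r_F$ leftovers.
\item Absorb each leftover by local coverage inside the set of leftovers together with $\mathcal A$, at a cost of at most $t-1$ parts of $\mathcal A$ each. The pool stays larger than $r_F$ throughout.
\item Split what remains of $\mathcal A$, whose size is divisible by $t$, into arbitrary $t$-sets; each is an edge of $\mathcal H$.
\end{itemize}
The final tiling of the residue of $\mathcal A$, which you identified as the obstacle, becomes trivial because $\mathcal A$ is a clique.

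Your R\"odl--Ruci\'nski--Szemer\'edi route could be repaired the same way: count cliques via Ramsey supersaturation and find an absorber for $\mathcal Y$ inside each one. But once you have cliques, the counting and the random selection are unnecessary.
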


\begin{proof}
Let $G$ be an $rt$-partite graph with vertex partition $\mathcal P = \{P_1,\dots, $ $P_{rt}\}$ with $d_r(G) > \alpha$.  Let $C = tr_F$.  Let $r_0 = R_t(C, r_F)$ be the $t$-uniform hypergraph Ramsey number.  We show that if $r > r_0$ then $G$ contains an $F$-factor traversal.  

The first step is to find an absorber, which consists of a set $\mathcal A\subset\mathcal P$ of size $C$ such that any $t$ parts of $\mathcal A$ contain an $F$-subtraversal.  Consider the $t$-uniform hypergraph $\mathcal H$ whose vertex set is $\mathcal P$ and which has an edge $\{P_1,\dots, P_t\}$ if $\{P_1,\dots, P_t\}$ contains an $F$-subtraversal.  By assumption, every subset of $\mathcal P$ of size $r_F$ is locally $F$-covered and in particular contains an $F$-subtraversal, so $\mathcal H$ has no independent set of size $r_F$.  Thus as long as $r \geq r_0$, $\mathcal H$ contains a complete subhypergraph of size $C$, whose vertices form the elements of $\mathcal A$.

Let $\mathcal P' = \mathcal P\setminus \mathcal A$.  As observed previously, every subset of $\mathcal P'$ of size at least $r_F$ contains an $F$-subtraversal, so by iterating we may find a collection of vertex-disjoint copies of $F$ which traverses all but at most $r_F$ of the parts in $\mathcal P'$.  Let $\mathcal P''$ be the set of parts which are not traversed.  

Since $|\mathcal A\cup \mathcal P''| = tr_F + |\mathcal P''| \geq r_F + t|\mathcal P''|$, we have that $\mathcal A\cup\mathcal P''$ is locally $F$-covered, so there is some $F$-subtraversal which contains a vertex in at least one part of $\mathcal P''$ (and hence at most $t-1$ parts in $\mathcal A$).  Iterating this argument, there exists a set $S\subset\mathcal A$ of size at most $(t-1)|\mathcal P''|$ such that $\mathcal S\cup\mathcal P''$ contains an $F$-factor traversal.  Finally, by the construction of $\mathcal A$, we have that $\mathcal A\setminus \mathcal S$ must contain an $F$-factor traversal; this completes the proof.

\end{proof}

This lemma enables simple proofs of Theorems \ref{thm:ktfactor}, \ref{thm:bipfactor}, and \ref{thm:oddcycfactor}.  For the first proof we need the traversal version of the Erd\H os-Stone-Simonovitz theorem established by Bondy, Shen, Thomass\'e, and 
Thomassen.

\begin{lemma}[\cite{bondy}, Theorem 1]\label{lem:ess}
    Let $F$ be a graph and $\chi(F)$ its chromatic number.  Then $\pi_r(F) \geq 1 - \frac{1}{\chi(F)-1}$ for all $r$, and
    $\lim_{r\to\infty}\pi_r(F)  =1 - \frac{1}{\chi(F)-1}.$
\end{lemma}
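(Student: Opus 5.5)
The lemma has two parts. I would get the lower bound from an explicit weighted blow-up construction, and the limit by reducing to the classical Erd\H os--Stone--Simonovits theorem with a first-moment argument. Write $k = \chi(F)-1$ throughout. If $k=0$ ($F$ edgeless) there is nothing to prove, so assume $k\geq 1$.

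\emph{Lower bound.} For every $r$, I would define a weighted $r$-partite graph $G$ as follows. Each part is $V_i = \{x_i^1,\dots,x_i^k\}$, with $w(x_i^a) = \frac 1 k$ for each $a$. For $i\neq j$, join $x_i^a$ to $x_j^b$ exactly when $a\neq b$. Then for all $i\neq j$,
\[d_w(V_i,V_j) = \sum_{a\neq b}\frac{1}{k^2} = 1-\frac 1 k .\]
Any traversal of $G$ is properly $k$-colored by the superscript $a$, because same-superscript vertices are never adjacent. So a traversal cannot contain $F$, since $\chi(F)=k+1$. Hence $d_{w,r}(G) = 1-\frac1k$ and $G$ has no $F$-traversal. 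By the weighted/unweighted equivalence noted in Section~\ref{sec:prelim}, this gives $\pi_r(F)\geq 1-\frac 1 k$ for all $r$.

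\emph{Upper bound.} Fix $\ep>0$ and let $G$ be an $r$-partite graph (unweighted, parts $V_1,\dots,V_r$) with $d_r(G) > 1-\frac1k+\ep$. Choose a random traversal by picking independently a uniformly random vertex $x_i\in V_i$ for each $i$, and let $T$ be the graph induced on $\{x_1,\dots,x_r\}$. For $i<j$,
\[\Pr[x_ix_j\in E(G)] = d(V_i,V_j) > 1-\frac1k+\ep,\]
so by linearity of expectation $\mathbb E[e(T)] > \left(1-\frac1k+\ep\right)\binom r2$. Therefore some fixed traversal $T$ is an $r$-vertex graph with more than $\left(1-\frac1k+\ep\right)\binom r2$ edges. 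The classical Erd\H os--Stone--Simonovits theorem (in Erd\H os--Stone form) says such a graph contains the complete $(k+1)$-partite graph $K_{k+1}(t)$ with $t=|V(F)|$, provided $r\geq r_0(\ep,F)$. Since $\chi(F)=k+1$, $F$ is a subgraph of $K_{k+1}(t)$, so $T$ is an $F$-traversal. Thus $\pi_r(F)\leq 1-\frac1k+\ep$ for all large $r$, and together with the lower bound the limit is $1-\frac1k$.

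\emph{Main obstacle.} I expect no serious obstacle. The key step is noticing that the pairwise densities become edge probabilities of a random traversal, which lets the classical theorem apply directly. The only points to check are that $F\subseteq K_{k+1}(t)$, and that $\pi_r$ is defined with ``$\geq\alpha$'', so the construction of density exactly $1-\frac1k$ suffices for the lower bound.
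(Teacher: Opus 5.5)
Your proposal is correct and follows essentially the same route as the paper: the upper bound is an averaging argument (your random traversal) producing a traversal with more than $\left(1-\frac{1}{\chi(F)-1}+\ep\right)\binom r2$ edges, followed by the classical Erd\H{o}s--Stone--Simonovits theorem, and the lower bound is the standard $k$-coloured blow-up, the same construction the paper uses in its closing remark on minimum degree. One minor point: in your construction all vertices of a part have equal weight, so it is already an unweighted graph, and the appeal to the weighted/unweighted equivalence is unnecessary.
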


The proof is an averaging argument -- any $r$-partite graph $G$ with $d_r(G) >\alpha$ contains a traversal with edge density at least $\alpha$, at which point we can directly apply the normal Erd\H os-Stone-Simonovitz theorem.  The same sort of argument gives a multipartite analogue of the K\H ov\' ari-S\'os-Tur\' an theorem, which we need for the second proof.

\begin{lemma}\label{lem:kst}
    For any bipartite graph $F$, $\lim_{r\to\infty}\pi_{K_{r,r}}(F) = 0$.
\end{lemma}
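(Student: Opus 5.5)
We would prove Lemma \ref{lem:kst} by the same averaging idea that the paper describes for Lemma \ref{lem:ess}: first find a single traversal that inherits the density, then apply the ordinary Kővári–Sós–Turán theorem \cite{kst} to that traversal. Fix a bipartite graph $F$ with parts of sizes $s\le t$ (so $F\subseteq K_{s,t}$), and fix $\alpha>0$. It suffices to show that for $r$ sufficiently large, every $K_{r,r}$-partite graph $G$ with $d_{K_{r,r}}(G)>\alpha$ contains an $F$-traversal. Here an $F$-traversal of a $K_{r,r}$-partite graph means a traversal, on $2r$ vertices, containing $F$ as a subgraph.

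\textbf{Step 1 (random traversal).} Let $G$ have parts $L_1,\dots,L_r$ and $R_1,\dots,R_r$. Choose one vertex $x_i\in L_i$ and one vertex $y_j\in R_j$, each independently and uniformly at random. For each pair $(i,j)$, the probability that $x_iy_j$ is an edge is exactly $d(L_i,R_j)>\alpha$. By linearity of expectation, the induced traversal has more than $\alpha r^2$ edges in expectation, so some choice of $x_1,\dots,x_r,y_1,\dots,y_r$ yields a bipartite graph $T$ with sides $\{x_i\}$ and $\{y_j\}$ and more than $\alpha r^2$ edges. This step is equally easy in the weighted setting: choose vertices according to the weights $w$.

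\textbf{Step 2 (Kővári–Sós–Turán).} The graph $T$ is a bipartite graph with $r$ vertices on each side and more than $\alpha r^2$ edges. By the Kővári–Sós–Turán theorem, $\mathrm{ex}(r,r;K_{s,t})=O(r^{2-1/s})=o(r^2)$. Hence for $r\ge r_0(\alpha,s,t)$, $T$ contains a copy of $K_{s,t}$, and therefore a copy of $F$. Since $T$ is a traversal, this is an $F$-traversal of $G$. We have shown $\pi_{K_{r,r}}(F)\le\alpha$ for all large $r$. As $\alpha>0$ was arbitrary and $\pi\ge 0$ trivially, the limit is $0$.

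We expect no real obstacle here. The only point needing care is making sure that the copy of $F$ produced by Kővári–Sós–Turán sits inside the traversal's bipartition, which holds automatically because $T$ itself is bipartite with sides given by $\mathcal L$ and $\mathcal R$, and containment of $F$ in a traversal is all the definition of $F$-traversal requires.
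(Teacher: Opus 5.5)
Your proposal is correct and is essentially the paper's argument: the paper proves this lemma only by a one-line remark that ``the same sort of argument'' as for Lemma \ref{lem:ess} works, namely averaging to find a single traversal with edge density greater than $\alpha$ and then applying the ordinary K\H{o}v\'ari--S\'os--Tur\'an theorem to that traversal. Your random-choice version of the averaging step, and your observation that the resulting copy of $K_{s,t}$ (hence of $F$) automatically lies inside the traversal, supply exactly the details the paper leaves implicit.
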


\subsection{Proof of Theorem \ref{thm:ktfactor}}

\subsubsection{The lower bound}

Recall that $\alpha_t$ is the positive solution to the quadratic $\alpha = 1-\frac 1 {t-2}\alpha^2$.  The following construction illustrates the lower bound $\pi_{rt}(rK_t) \geq \alpha_t$.

\begin{construction}
Let $t \geq 3$, $r \geq 1$, and let $\alpha \in [0,1]$ be a parameter.  We construct a weighted $rt$-partite graph $G$ as follows.  By Lemma \ref{lem:ess}, there exists an $(rt-1)$-partite graph $G'$ with parts $V_1',\dots, V_{rt-1}'$ and density $d_{rt-1}(G') \geq 1-\frac{1}{\chi(K_{t-1})-1} = \frac{t-3}{t-2}$ with no $K_{t-1}$ traversal.  For each $1\leq i<rt$, the part $V_i$ of $G$ is obtained by taking $V_{i}'$ along with an extra vertex $u_i$; the weights are scaled so that $w(u_i) = 1-\alpha$ and the total weight of all vertices in $V_i'$ is $\alpha$.  The final part $V_{rt}$ consists of a single vertex $\{v\}$ with $w(v) = 1$.

For each $1\leq i<rt$, there is an edge between $v$ and every vertex in $V_i'$.  For $1\leq i \neq j < rt$, there are edges between $u_i$ and every vertex in $V_j$.  The edges between $V_i'$ and $V_j'$ are those that were present in $G'$.

Any traversal of $G$ must contain the vertex $v$, but the neighborhood of $v$ in $G$ is only those vertices in $G'$, which by construction contains no $K_{t-1}$.  Hence $G$ is not locally $K_t$-covered and has no $K_t$-factor traversal.  The edge density between parts $V_i$ and $V_j$ is $\alpha$ if $i$ or $j$ equals 1, and is at least $1-\alpha^2 + d_{rt-1}(G') \alpha^2 \geq 1-\frac{1}{t-2}\alpha^2$ otherwise.  Choosing $\alpha$ to satisfy $\alpha = 1-\frac{1}{t-2}\alpha^2$ yields $d_{rt}(G) \geq \alpha_t$.

\end{construction}

\subsubsection{The upper bound}
\begin{proof}[\unskip\nopunct]
For the upper bound, we prove that for all $\ep > 0$, there is an $r_{K_t}(\ep)$ such that, for every $r > r_{K_t}(\ep)$, any $r$-partite graph $G$ with $d_{r}(G) > \alpha_t + \ep$ is locally $K_t$-covered.  By Lemma \ref{lem:factorabsorb}, this implies $\lim_{r\to\infty}\pi_{rt}(rK_t) \leq \alpha_t$, completing the proof.

Fix $\ep > 0$.  By Lemma \ref{lem:ess}, there exists a constant $r_{t,\ep}$ such that $\pi_{r_{t,\ep}}(K_{t-1}) < \frac{t-3}{t-2} + \ep$.  Let $G$ be an $r$-partite graph for $r$ sufficiently large with vertex partition $\{ P_1, \dots,  P_r\}$ and $d_r(G) > \alpha_t + \ep$, and let $P$ be any part in $G$. By Lemma \ref{lem:robustconnect}, there is a vertex $v\in P$ and parts $P_1,\dots, P_{r_{t,\ep}}$ such that $d(v, P_i) > \alpha_t$ for each $1\leq i\leq r_{t,\ep}$.  For each $1\leq i < j\leq r_{t,\ep}$, we have
\begin{align*}\alpha_t + \ep &< d(P_i, P_j)\\
&\leq d(v, P_i)d(v,P_j)d(N_{P_i}(v), N_{P_j}(v)) + 1-d(v, P_i)d(v,P_j)\\
&\leq 1-\alpha_t^2 \left(1-d(N_{P_i}(v), N_{P_j}(v))\right).
\end{align*}
So $d(N_{P_i}(v), N_{P_j}(v))\geq \frac{\alpha_t + \alpha_t^2 - 1 + \ep}{\alpha_t^2} \geq \frac{t-3}{t-2} +\ep.$  It follows from our choice of $r_{t,\ep}$ that the vertex sets $N_{P_1}(v), \dots, N_{P_{r_{t,\ep}}}(v)$ contain a $K_{t-1}$-subtraversal; by adding $v$ we obtain a $K_t$-subtraversal using a vertex in $P$.

\end{proof}

\subsection{Proof of Theorem \ref{thm:bipfactor}}

Let $F$ be a connected bipartite graph on $t$ vertices which is not a star.  Let $\alpha = \frac{3-\sqrt 5}{2}$.

\subsubsection{The lower bound}
The following construction illustrates the lower bound $\pi_{rt}(rF) \geq \alpha$ for any $t$-vertex graph $F$ which is not a star.

\begin{construction}\label{cons:pathlb}
    For $r\geq 1$, we define a weighted $rt$-partite graph $G$.  The parts of $G$ are $V_i = \{x_i,y_i\}$ for $i = 1, 2, \dots, rt-1$, and $V_{rt} = \{v\}$.  There are edges $x_iv$ for all $1\leq i < rt$, and $y_iy_j$ for all $1\leq i < j <rt$.  The weights are $w(x_i) = \alpha$, $w(y_i) = 1-\alpha$ for all $1\leq i < rt$, and $w(v) = 1$.  $G$ is not locally $F$-covered, since the connected component of $G$ containing $v$ is only a star so no there is no $F$-subtraversal containing $v$. And $d_{rt}(G) = \min(\alpha, (1-\alpha)^2) = \alpha$.
\end{construction}

\subsubsection{The upper bound}
\begin{proof}[\unskip\nopunct]

For the upper bound, we again prove that for all $\ep > 0$, there is an $r_{F}(\ep)$ such that, for every $r > r_{F}(\ep)$, any $r$-partite graph $G$ with $d_{r}(G) > \alpha + \ep$ is locally $F$-covered.  By Lemma \ref{lem:factorabsorb}, this implies $\lim_{r\to\infty}\pi_{rt}(rF) \leq \alpha$, completing the proof.

Fix $\ep > 0$.  Let $G$ be an $r$-partite graph for $r$ sufficiently large with vertex partition $\{ P_1, \dots,  P_r\}$ and $d_r(G) > \alpha + \ep$, and let $P$ be any part in $G$. We find a $K_{t,t+1}$-subtraversal containing a vertex in $P$; this necessarily contains an $F$-subtraversal containing a vertex in $P$.

 By Lemma \ref{lem:kst} there exists a constant $r_\ep$ such that $\pi_{K_{r_\ep,r_\ep}}(K_{t,t}) < \ep/2$.  Let $C$ be large enough that any balanced bipartite graph on $2C$ vertices with at least $\frac{C(C-1)}{2}$ edges contains a copy of $K_{r_\ep,r_\ep}$.  By Lemma \ref{lem:robustconnect}, there is a vertex $v\in P$ and parts $P_1,\dots, P_{C}\in \mathcal P$ such that $d(v, P_i) > \alpha + \frac \ep 2$ for each $1\leq i\leq C$.   For each $1\leq i < j\leq C$, we have
\begin{align*}\alpha + \ep &< d(P_i, P_j)\\
&\leq d(N_{P_i}(v),P_j) + d(P_i, N_{P_j}(v)) + d(P_i\setminus N_{P_i}(v), P_j\setminus N_{P_j}(v))\\
&\leq d(N_{P_i}(v),P_j) + d(P_i, N_{P_j}(v)) + \left(1-\alpha - \frac \ep 2\right)^2.
\end{align*}
So
\[d(N_{P_i}(v),P_j) + d(P_i, N_{P_j}(v)) \geq \alpha + \ep - (1-\alpha)^2 + \ep(1-\alpha) - \frac {\ep^2}{4} > \ep,\]
and hence at least one of $d(N_{P_i}(v),P_j)$ and $d(P_i, N_{P_j}(v))$ is at least $\frac \ep 2$.

Consider an auxiliary bipartite graph $H_P$ whose left and right vertex sets $\mathcal L$ and $\mathcal R$ are two copies of $\{P_1,\dots, P_C\}$ with an edge from $P_i\in \mathcal L$ to $P_j \in \mathcal R$ if $d(N_{P_i}(v),P_j) \geq \frac \ep 2$.  The above shows that for every $1\leq i < j \leq C$, there is either an edge from $P_i\in\mathcal L$ to $ P_j\in\mathcal R$ or $P_j\in\mathcal L$ to $ P_i\in\mathcal R$; in particular the number of edges in $H_P$ is at least $\frac{C(C-1)}{2}$.  By our choice of $C$, $H_P$ has a $K_{r_\ep, r_\ep}$ subgraph; that is, there are distinct parts $P_1,\dots, P_{r_\ep}$ and $Q_1,\dots, Q_{r_{\ep}}$ in $\{P_1,\dots, P_C\}$ such that $d(N_{P_i}(v),Q_j) \geq \frac \ep 2$ for all $1\leq i, j \leq r_\ep$. 

Consider now the $K_{r_\ep,r_\ep}$-partite graph $G'$ whose vertex partition is $\mathcal L = \{N_{P_1}(v),\dots,N_{P_{r_\ep}}(v)\}$ and $\mathcal R = \{Q_1,\dots, Q_{r_\ep}\}$.  Then $d_{K_{r_\ep,r_\ep}}(G') \geq \frac \ep 2$, so by our choice of $r_\ep$ it contains a $K_{t,t}$-subtraversal.  This gives a $K_{t,t}$-subtraversal of $G$ of which half the vertices lie in the neighborhood of $v$; by including $v$ we obtain a $K_{t,t+1}$-subtraversal using a vertex in $P$, as desired.
\end{proof}

\subsection{Sketch of proof of Theorem \ref{thm:oddcycfactor}}

As the proof is very similar in structure to the previous two, and very similar in methods to the proof of Theorem \ref{thm:bipham}, we omit details.  The lower bound is given by Construction 4.2 in \cite{badakhshian}, which for any $r\geq 1$ gives an example of an $r$-partite graph with $d_r(G) = \frac 1 2$ and no odd cycle subtraversal.

For the upper bound, we again argue that for all odd $t\geq 7$ and $\ep > 0$, any $r$-partite graph $G$ with $d_r(G) > \frac 1 2 + \ep$ and $r$ sufficiently large is locally $C_t$-covered.  Let $t' = \frac{t-1}{2}$.  For any part $P$ of $G$, applying Lemma \ref{lem:robustconnect} to $P$, and then the argument of Claim \ref{claim:absorber} to the parts which $P$ robustly connects, yields parts $P_1,\dots, P_{t'}$ and $Q_1,\dots, Q_{t'+3}$ such that $P_1 = P$ and each $P_i$ robustly connects $Q_1,\dots, Q_{t'+3}$.  By applying Lemma \ref{lem:connect4strong} we can obtain a subtraversal path from the connecting vertex in $P_1$ to that in $P_2$ which traverses either 2 or 4 parts in $Q_1,\dots, Q_4$.  Then among the parts $P_1,\dots, P_{t'}$ and $Q_5,\dots, Q_{t'+3}$ we can construct a robust sequence  with length either $t-3$ or $t-5$ whose end parts are $P_1$ and $P_2$.  This completes a $C_t$-subtraversal using some vertex in $P$.

\section{Proof of Lemma \ref{lem:connect4strong}}\label{sec:lemmaproof}

In this section we prove Lemma \ref{lem:connect4strong}.

\begin{proof}
    Suppose that neither of the given conditions occur. Define the following sets:
    \begin{align*}A = N_{P_1}(S_3)\setminus N_{P_1}(S_4),\; &B = N_{P_1}(S_3)\cap N_{P_1}(S_4), \\\;& C = N_{P_1}(S_4)\setminus N_{P_1}(S_3) \subset P_1\end{align*}
    \begin{align*}
    D = N_{P_2}(S_3)\setminus N_{P_2}(S_4),\; &E = N_{P_2}(S_3)\cap N_{P_2}(S_4), \;\\& F = N_{P_2}(S_4)\setminus N_{P_2}(S_3) \subset P_2
    \end{align*}
    \begin{align*}G = N_{P_3}(S_1)\setminus N_{P_3}(S_2),\; &H = N_{P_3}(S_1)\cap N_{P_3}(S_2), \; \\&I = N_{P_3}(S_2)\setminus N_{P_3}(S_1)\subset P_3\end{align*}    \begin{align*}J = N_{P_4}(S_1)\setminus N_{P_4}(S_2),\; &K = N_{P_4}(S_1)\cap N_{P_4}(S_2), \; \\&L = N_{P_4}(S_2)\setminus N_{P_4}(S_1)\subset P_4\end{align*}
     Since there is no edge between $S_1$ and $S_3$, the sets $S_1$ and $N_{P_1}(S_3)$ are disjoint; likewise $S_1$ and $N_{P_1}(S_4)$ are disjoint, so the sets $S_1, A, B$, and $C$ are disjoint subsets of $P_1$.  Likewise $S_2, D, E$, and $F$ are disjoint in $P_2$, and similarly for $P_3$ and $P_4$.

     Let $i, j$ be $1, 2$ in some order, and $k, l$ be $3, 4$ in some order. If there were any edge between $N_{P_i}(S_k)$ and $N_{P_l}(S_j)$, then there would be a path $S_k, P_i, P_l, S_j$, which by assumption does not exist.  It follows that the following pairs of sets may not have any edges between them:
     \[N_{P_1}(S_3) = A\cup B \text{ and } N_{P_4}(S_2) = K\cup L\]
     \[N_{P_1}(S_4) = B\cup C \text{ and } N_{P_3}(S_2) = H\cup I\]
     \[N_{P_2}(S_3) = D\cup E \text{ and }N_{P_4}(S_1) = J\cup K\]
     \[N_{P_2}(S_4) = E\cup F \text{ and }N_{P_3}(S_1) = G\cup H\]
     Between $P_1$ and $P_3$, then, the maximum number of edges is 
     \[|S_1||N_{P_3}(S_1)| + |N_{P_1}(S_3)||S_3| + (|P_1|-|S_1|)(|P_3|-|S_3|) - |N_{P_1}(S_4)||N_{P_3}(S_2)|.\]
     Denoting $\frac{|A|}{|P_1|}$ by $a$ and defining $b, c, \dots, l$ and $s_1,\dots, s_4$ similarly, the maximum value of $d(P_1,P_3)$ is
     \[s_1(g + h) + s_3(a + b) + \left(1-s_1\right)\left(1-s_3\right) - (b + c)(h + i).\]
     And similarly,
\begin{align*}&d(P_1,P_4)\leq s_1(j+k)+s_4(b+c)+(1-s_1)(1-s_4)-(a+b)(k+l),\\
&d(P_2,P_3)\leq s_2(h+i)+s_3(d+e)+(1-s_2)(1-s_3)-(e+f)(g+h),\\
&d(P_2,P_4)\leq s_2(k+l)+s_4(e+f)+(1-s_2)(1-s_4)-(d+e)(j+k).\end{align*}

    Denote these four expressions by $\Delta_1$, $\Delta_2$, $\Delta_3$, $\Delta_4$.

     \begin{claim*} Subject to the constraints that all variables are nonnegative, $s_1+s_2\geq1$, $s_3+s_4\geq1$, and 
     \[ s_1+a + b + c, \; s_2+d + e + f, \;s_3+g + h + i, \; s_4+j + k + l \leq  1, \]
     we have that $\Delta := \Delta_1+\Delta_2+\Delta_3+\Delta_4$ is at most $2$.
     \end{claim*}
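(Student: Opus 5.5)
The plan is to exploit the multilinear structure of $\Delta$ and reduce the claim to finitely many cases. First we simplify the ``free'' part. The four terms $(1-s_i)(1-s_k)$ with $i\in\{1,2\}$, $k\in\{3,4\}$ sum to $(2-s_1-s_2)(2-s_3-s_4)$. Since each $s_i\le 1$ and $s_1+s_2,s_3+s_4\ge 1$, both factors lie in $[0,1]$.

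Next we collect the remaining terms. The linear part is
\[
s_1(g+h+j+k)+s_2(h+i+k+l)+s_3(a+b+d+e)+s_4(b+c+e+f),
\]
and the subtracted quadratic part is
\[
(b+c)(h+i)+(a+b)(k+l)+(e+f)(g+h)+(d+e)(j+k).
\]

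\textbf{Vertex reduction.} Fix $s_1,\dots,s_4$. Then $\Delta$ is affine in each of the four groups $(a,b,c)$, $(d,e,f)$, $(g,h,i)$, $(j,k,l)$ separately when the other three groups are held fixed. Each group ranges over a simplex, for example $\{a,b,c\ge 0,\ a+b+c\le 1-s_1\}$. Maximizing one group at a time, we may therefore assume that in each group at most one variable is nonzero, and that this variable equals its full allowance ($1-s_1$, $1-s_2$, $1-s_3$, $1-s_4$ respectively). This leaves at most $4^4$ configurations. The symmetries $1\leftrightarrow 2$ (which swaps $a\leftrightarrow d$, $b\leftrightarrow e$, $c\leftrightarrow f$) and $3\leftrightarrow 4$, together with the left/right swap of the roles of $\{P_1,P_2\}$ and $\{P_3,P_4\}$, cut this down substantially.

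\textbf{Case analysis.} In each remaining configuration, $\Delta$ becomes an explicit polynomial in $s_1,\dots,s_4$. We bound it using $x=s_1+s_2\in[1,2]$ and $y=s_3+s_4\in[1,2]$.

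The model case is $b=1-s_1$, $e=1-s_2$, $h=1-s_3$, $k=1-s_4$. Here the linear part equals $x(2-y)+y(2-x)$. The subtracted part equals $(2-x)(2-y)$, which cancels the free part exactly. Hence
\[
\Delta = 2x+2y-2xy = 2-2(x-1)(y-1)\le 2.
\]
This also shows the bound is sharp. In other cases, putting mass into a ``private'' set such as $a$ or $c$ gains a linear term like $s_3 a$ but loses the cross term, so we expect each case to reduce to an inequality of the form $2-(\text{nonnegative product})$, possibly after using $s_i\le 1$.

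\textbf{Main obstacle.} The hard step is organizing this case check so that it is not a brute-force grind over all $4^4$ configurations. First I would try to show directly that, for fixed $s$, moving mass from $a$ or $c$ into $b$ (and likewise within the other three groups) never decreases $\Delta$ up to a controlled error, so that the $b,e,h,k$ configuration dominates. If that monotonicity fails, I would instead split according to which of $s_1,s_2$ and which of $s_3,s_4$ is larger and handle those cases separately.
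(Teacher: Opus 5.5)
Your framework is sound: $\Delta$ is affine in each of the groups $(a,b,c)$, $(d,e,f)$, $(g,h,i)$, $(j,k,l)$ separately, so the vertex reduction is legitimate. Your evaluation of the configuration $b=1-s_1$, $e=1-s_2$, $h=1-s_3$, $k=1-s_4$ as $2-2(x-1)(y-1)$ is also correct. But that configuration is the only one you actually bound. For every other vertex you only say you \emph{expect} a bound of the form $2-(\text{nonnegative})$. The domination argument is left as something you would try, with a fallback in case it fails. The whole content of the claim lies in ruling out the other configurations, so the proposal as written does not prove it. (Also, your $1\leftrightarrow 2$ symmetry must swap $g\leftrightarrow i$ and $j\leftrightarrow l$ as well, since those sets are defined via $S_1$ versus $S_2$; without that, $\Delta$ is not invariant.)

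The missing idea is a one-line exchange inequality obtained by combining the two types of constraint. Every partial sum of the third group is at most $1-s_3\le s_4$, of the fourth at most $1-s_4\le s_3$, of the first at most $1-s_1\le s_2$, and of the second at most $1-s_2\le s_1$. Write $\partial_v$ for the coefficient of $v$ in $\Delta$ with all other variables fixed. The coefficients of $a,b,c$ are $s_3-(k+l)$, $s_3+s_4-(h+i)-(k+l)$ and $s_4-(h+i)$, so
\[
\partial_b-\partial_a=s_4-(h+i)\ge 0,\qquad \partial_b-\partial_c=s_3-(k+l)\ge 0,\qquad \partial_b=(\partial_b-\partial_a)+(\partial_b-\partial_c)\ge 0 .
\]
Hence shifting all of the first group's mass onto $b$ and then raising $b$ to $1-s_1$ never decreases $\Delta$. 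The analogous statements hold for $e$, $h$, $k$ (e.g.\ $\partial_h-\partial_g=s_2-(b+c)\ge 0$ and $\partial_h-\partial_i=s_1-(e+f)\ge 0$). The constraints remain satisfied after each shift, so you can process the four groups in turn and arrive exactly at your model configuration, which finishes the proof. The monotonicity you hoped for therefore holds exactly, with no error term, and the vertex reduction and case analysis become unnecessary.

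This is in substance the paper's proof. It uses $\partial_b\ge 0$ and its analogues to make all four constraints tight, then substitutes $b=1-s_1-a-c$ etc. This yields
\[
\Delta=2\bigl(1-(x-1)(y-1)\bigr)-(y-1)(a+c+d+f)-(x-1)(g+i+j+l)-ag-cj-di-fl,
\]
where every ``private'' variable carries a nonpositive coefficient. That identity is the exchange inequality above in integrated form.
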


     Given this claim, at least one of the four densities must be at most $\frac 1 2$, contradicting the density condition on $G$.
     \begin{proof}[Proof of claim]\let\qed\relax
         Let $s_1,\dots,s_4,a, b, \dots, l$ be values maximizing $\Delta$ subject to the given constraints.

	We first claim that without loss of generality we many assume $s_1+a+b+c=1$.  Indeed, the terms of $\Delta$ involving $b$ are $b(s_3-h-i+s_4-k-l)$.  Since 
    \[(s_3-h-i+s_4-k-l) + (s_3 + h + i) + (s_4 + k + l) = 2(s_3 + s_4) \geq 2,\]
    but the latter two summands are each at most one, it must be that $s_3-h-i+s_4-k-l$ is nonnegative.  Hence if $s_1+a+b+c<1$, we may increase $b$ until $s_1+a+b+c=1$ without decreasing $\Delta$.  Similarly, we may assume
         \[s_2+ d + e + f, \;s_3+g + h + i, \; s_4+j + k + l =1.\]
         Substituting $b = 1-s_1 - a - c$, $e = 1-s_2 - d - f$, $h = 1-s_3 - g - i$, $k=1-s_4- j - l$, we get
         \begin{align*}
             \Delta_1  =\; &s_1(1-s_3-i) + s_3(1-s_1-c) + \left(1-s_1\right)\left(1-s_3\right) \\&\;- (1-s_1-a)(1-s_3-g)\\
             =\;& s_1+s_3-2s_1s_3  - is_1 - cs_3+ (1-s_3)a+(1-s_1)g-ag.\\
            \end{align*}
        And similarly,
        \begin{align*}&\Delta_2 = s_1+s_4-2s_1s_4-ls_1-as_4+(1-s_4)c+(1-s_1)j-cj,\\
        &\Delta_3 =s_2+s_3-2s_2s_3-gs_2-fs_3+(1-s_3)d+(1-s_2)i-di,\\
        &\Delta_4 = s_2+s_4-2s_2s_4-js_2-ds_4+(1-s_4)f+(1-s_2)l-fl.\end{align*}
        Adding these together and factoring yields
        \begin{align*}
\Delta =\;& 2\left(s_1+s_2+s_3+s_4-(s_1+s_2)(s_3+s_4)\right)\\&-(s_3+s_4-1)(a+c+d+f)-(s_1+s_2-1)(g+i+j+l)\\&-ag-cj-di-fl\\
\leq\;& 2\left(s_1+s_2+s_3+s_4-(s_1+s_2)(s_3+s_4)\right)\\
=\;& 2\left(1-(s_1+s_2-1)(s_3+s_4-1)\right)\\
\leq\;&2(1-0) = 2.
\end{align*}
     \end{proof}

\end{proof}

\section{Discussion and open problems}\label{sec:conclusion}

In this section we discuss our results and remark on some remaining open problems.

In Section \ref{sec:bipcon} we proved $\pi_{K_{r,r}}(\mathcal T_{2r}) = \frac 1 2$ for $r > 7000$.  We strongly conjecture that this equality holds for all $r$. 
\begin{conjecture}
    $\pi_{K_{r,r}}(\mathcal T_{2r}) = \frac 1 2$ for all $r \geq 1$.
\end{conjecture}

Indeed by simple arguments one can show $\pi_{K_{2,2}}(\mathcal T_4) = \frac 1 2$ (this follows from Theorem 1.6 of \cite{badakhshian}, as $K_{2,2}$ is $K_4$ with a matching removed) and $\pi_{K_{3,3}}(\mathcal T_6) = \frac 1 2$ (if $G$ is a $K_{3,3}$-partite graph, one may find vertices $l \in L_1$, $r\in R_1$ such that $d(l,R_2) + d(l,R_3), d(r,L_2) + d(r,L_3) > 1$, and then apply Lemma \ref{lem:connect4strong} to find a path from $N_{R_2\cup R_3}(l)$ to $N_{L_2\cup L_3}(r)$).

In Section \ref{sec:bipham} we proved $\lim_{r\to\infty}\pi_r(C_r) = \frac 1 2$.  A particularly natural next question is to ask for the critical density for a Hamiltonian path: 

\begin{question}
    What are the values of $\pi_r(P_r)$ and $\pi_{K_{r,r}}(P_{2r})$?
\end{question}

Clearly $\pi_r(P_r) \leq \pi_r(C_r)$.  We can also observe that, since $P_r$ is not a star for $r\geq 4$, Construction \ref{cons:pathlb} gives the lower bound $\pi_r(P_r)\geq \frac{3-\sqrt{5}}{2}$ for $r\geq 4$.  By exhaustive casework the values of $\pi_r(P_r)$ may be computed for small $r$ (see e.g. \cite{badakhshian} \S 2.1 for discussion, and Proposition A.1 for an example of this method); such exhaustive search confirms $\pi_4(P_4) = \frac{3-\sqrt 5}{2}$ exactly, but appears untenable for larger $r$. We tentatively conjecture that $\pi_r(P_r) = \frac{3-\sqrt{5}}{2}$ for $r\geq 4$, though it would be interesting to show even that $\pi_r(P_r) \leq \frac 1 2$; such a statement seems out of reach of the methods of this paper.  

In the bipartite setting, the fact that $\pi_{K_{r,r}}(\T_{2r})\leq\pi_{K_{r,r}}(P_{2r})\leq\pi_{K_{r,r}}(C_{2r})$ implies that $\lim_{r\to\infty}\pi_{K_{r,r}}(P_{2r}) = \frac 1 2$.  The above arguments showing that $\pi_{K_{2,2}}(\T_4) = \pi_{K_{3,3}}(\T_6) = \frac 1 2$ also show that $\pi_{K_{2,2}}(P_4) = \pi_{K_{3,3}}(P_6) = \frac 1 2$, since in both cases the spanning tree produced is a path.  We conjecture that $\pi_{K_{r,r}}(P_{2r}) = \frac 1 2$ for all $r\geq 1$.

In Section \ref{sec:factors} we examined factor traversals.  Our method was general and allowed us to determine asymptotically the critical density for an $F$-factor traversal when $F$ is complete, bipartite, or an odd cycle of length at least 7.  Naturally we may ask for the same value for other graphs $F$:

\begin{question}
    What is the value of $\lim_{r\to\infty}\pi_{r|V(F)|}(rF)$ for an arbitrary graph $F$?
\end{question}

In particular we note that $F = C_5$ is the only cycle for which we don't know this value. It seems likely that the method of Section \ref{sec:factors} may generalize to answer this question for a wider variety of graphs $F$. 

We may also ask when the critical density for an $F$-factor traversal may be determined exactly.  As remarked in the introduction, since $\pi_3(K_3) = \frac{-1+\sqrt 5}{2} = \alpha_3$, we have the exact result $\pi_{3r}(rK_3) = \pi_3(K_3) = \alpha_3$ for all $r\geq 1$.  As $\pi_4(P_4) = \frac{3-\sqrt 5}{2}$, and $P_4$ is bipartite, this likewise gives the exact result $\pi_{4r}(rP_4) = \pi_4(P_4) = \frac{3-\sqrt 5}{2}$.  We do not have a similar result for larger complete graphs; while the value of $\pi_t(K_t)$ is not known precisely for $t \geq 4$, a construction of Csikv\'ari and Nagy \cite{csikvari-nagy} yields $\pi_t(K_t)\geq \beta_t > \alpha_t$, where $\beta_t$ is defined recursively by 
\[\beta_t = \begin{cases}0 & t = 2 \\ \text{the positive solution to }\beta = 1-(1-\beta_{t-1})\beta^2 & t \geq 3\end{cases}.\]
We therefore ask whether or not the asymptotic answer might nonetheless be exact for larger $t$.
\begin{question}
    For $t\geq 4$, does $\pi_{rt}(rK_t) = \alpha_t$ for any $r > 0$?
\end{question}

And in general it would be interesting to characterize the behavior of the sequence $(\pi_{r|V(F)|}(rF))_{r=1}^\infty$, similar to the characterization of the behavior of the sequence $(\pi_r(F))_{r=1}^\infty$ for an arbitrary graph $F$ given by Narins and Tran in \cite{narins}.

\begin{question}
    For which $t$-vertex graphs $F$ do we have $\pi_{rt}(rF) = \pi_{t}(F)$ for all $r$? Among graphs for which $\pi_{rt}(rF) < \pi_{t}(F)$ for some $r$, which, if any, have the sequence $\pi_{rt}(rF)$ eventually constant?
\end{question}

We note that Lemma \ref{lem:factorabsorb} may still be useful to answer these questions, if one could establish that sufficiently large $r$-partite graphs are locally $F$-covered without relying on Lemma \ref{lem:robustconnect}.

As a final remark, we note that $r$-partite density is not necessarily the only metric of interest in this context.  Minimum degree conditions are also sensible to study: multipartite versions of the Dirac and Hajnal-Szemeredi theorems involve conditions on the $r$-partite, or ``local", minimum degree
\[\delta_r(G) := \min_{\substack{1\leq i \neq j \leq r\\ v\in V_i}}|N_{V_j}(v)|.\]
Furthermore there are several classes of $r$-partite graph traversals, including independent/complete traversals and traversals with each connected component a bounded size, which are widely studied and for which many known sufficient conditions for their existence involve (global or local) minimum or maximum degree \cite{berke, haxell, loh}.

It is interesting to consider the minimum degree variants of the problems studied in this paper.  For many of them the answers are relatively simple: it is easy to see, for example, that in an $r$-partite graph $G$ with each part of size $n$, $\delta_r(G) > \left(1 - \frac{1}{t-1}\right)n$ is both necessary and sufficient to guarantee the existence of a $K_t$-factor-traversal.  (For the lower bound consider an $r$-partite graph on parts $V_i = \{v_{i,1},\dots, v_{i,{t-1}}\}$ with edges $v_{i,k}v_{j,k'}$ if and only if $k\neq k'$.) For Hamiltonian traversals, however, the situation is somewhat less clear.  It is easy to see $\delta_r(G) > \frac 1 2 n$ is necessary and sufficient when $r$ is odd (the lower bound construction is the same as that just described for a $K_3$-factor). For even $r$ however, it turns out that $\delta_r(G) \geq \frac 1 2n$ is sufficient (essentially because in any $r$-partite graph with $\delta_r(G) \geq \frac 1 2$ and no $C_r$-traversal, the edges between each pair of parts must form two copies of $K_{n/2,n/2}$, reducing the problem to the case where each part has two vertices), and we do not know a nontrivial lower bound.

\begin{question}
    For even $r$, what is the minimum $\delta(n)$ such that any $r$-partite graph with each part of size $n$ and $\delta_r(G) > \delta(n)$ contains a Hamiltonian traversal?
\end{question}

\subsection*{Acknowledgments}
The author would like to thank Tom Bohman for many helpful discussions and guidance in the preparation of this paper.  This research was partially supported by NSF Award DMS-2246907.

\printbibliography

\end{document}